\pgfplotsset{compat=1.18}
\title{Graphs with Lin-Lu-Yau curvature at least one and regular bone-idle graphs\footnotetext{E-mail address: \texttt{moritz.hehl@uni-leipzig.de}}}
\author{Moritz Hehl}
\affil{Institute of Mathematics, Universit{\"a}t Leipzig, 04109 Leipzig, Germany }
\date{\today}
\theoremstyle{plain}
\newtheorem{lemma}{Lemma}[section]
\newtheorem{theorem}[lemma]{Theorem}
\newtheorem{corollary}[lemma]{Corollary}
\newtheorem{conjecture}[lemma]{Conjecture}
\theoremstyle{definition}
\newtheorem{definition}[lemma]{Definition}
\newtheorem{remark}[lemma]{Remark}
\newtheorem*{theoremLLY}{Theorem \ref{main_result_1}}
\newtheorem*{theoremComparison}{Theorem \ref{kappa_vergleich}}
\newtheorem*{theoremBoneIdle}{Theorem \ref{bone_idle_3_reg}}
\numberwithin{equation}{section}
\begin{document}

\maketitle

\begin{abstract}
    We study the Ollivier-Ricci curvature and its modification introduced by Lin, Lu, and Yau on graphs. We provide a complete characterization of all graphs with Lin-Lu-Yau curvature at least one. We then explore the relationship between the Lin-Lu-Yau curvature and the Ollivier-Ricci curvature with vanishing idleness on regular graphs. An exact formula for the difference between these two curvature notions is established, along with an equality condition. This condition allows us to characterize edges that are bone-idle in regular graphs. Furthermore, we demonstrate the non-existence of 3-regular bone-idle graphs and present a complete characterization of all 4-regular bone-idle graphs. We also show that there exist no 5-regular bone-idle graphs that are symmetric or a Cartesian product of a 3-regular and a 2-regular graph.
    \bigskip

    {\bf Keywords:} Graph curvature, Ollivier-Ricci curvature, Ricci-flat, regular graphs, optimal transportation
    \bigskip

    {\bf Mathematics Subject Classification (2020):} 05C75, 53C21, 05C81, 05C30, 68R10

\end{abstract}

\section{Introduction and statement of results}

Since the introduction of the geometric notion of curvature by Gauss and Riemann over 150 years ago, it has played a central role in differential geometry. Among the different types of curvature, Ricci curvature is of particular importance, serving as a fundamental tool in the study of Riemannian manifolds. Given its importance in differential geometry, it is natural to seek extensions of Ricci curvature to broader classes of metric spaces beyond Riemannian manifolds. This pursuit has led to the development of various generalized curvature notions for non-smooth or discrete structures, see, e.g., Bakry-{\'E}mery \cite{BE1985}, Erbar-Maas \cite{EM2012}, Mielke \cite{M2013} and Forman \cite{F2003}.

In this work, we study a notion of Ricci-curvature introduced by Ollivier in 2009 \cite{Ollivier2009}. Von Renesse and Sturm \cite{RS2005} established a connection between Ricci curvature and optimal transport on smooth Riemannian manifolds. Building upon their results, Ollivier developed a discrete notion of Ricci curvature on metric spaces equipped with Markov chains or a measure, known as the \textit{Ollivier-Ricci curvature}. This approach leverages optimal transport theory, as Ollivier's definition of curvature is based on the Wasserstein distance. 

In the context of locally finite graphs, Ollivier's notion of Ricci curvature has recently received considerable attention. In this setting, the Ollivier-Ricci curvature $\kappa_{\alpha}$ is defined on the edges of the graph and depends on an \textit{idleness parameter} $\alpha \in [0,1]$. Ollivier considered idleness parameter $\alpha = 0$ and $\alpha = \frac{1}{2}$. In 2011, Lin, Lu, and Yau \cite{LLY2011} introduced a modification of the Ollivier-Ricci curvature, by computing the derivative of the curvature with respect to the idleness parameter. We will refer to this modification as \textit{Lin-Lu-Yau curvature} and denote it by $\kappa$. 

Our first result is a complete characterization of the graphs with Lin-Lu-Yau curvature greater than or equal to one for every edge.

\begin{theorem}\thlabel{main_result_1}
    Let $G = (V,E)$ be a locally finite graph. Then $\kappa(x,y) \geq 1$ for every edge $x\sim y\in E$ if and only if the minimum degree $\delta(G) \geq \vert V \vert -2$.
\end{theorem}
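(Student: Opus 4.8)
The plan is to convert the degree hypothesis into a rigid structural description of $G$ and then run optimal transport on each side of the biconditional. The starting observation is that $\delta(G)\ge |V|-2$ is equivalent to $\overline{G}$ having maximum degree at most $1$, i.e.\ $\overline{G}$ is a (partial) matching $M$ and $G=K_{|V|}\setminus M$. I would also dispose of the global hypotheses first: for the direction $\Leftarrow$ the condition $\delta(G)\ge|V|-2$ already forces $G$ finite and (for $|V|\ge 3$) connected, while for $\Rightarrow$ a Bonnet--Myers estimate for $\kappa$ forces finite diameter, so it suffices to treat finite connected $G$. The payoff of the reformulation is that each edge $x\sim y$ then has a very rigid local picture: the only possible neighbour of $x$ not adjacent to $y$ is the $M$-partner $y'$ of $y$, the only possible neighbour of $y$ not adjacent to $x$ is the $M$-partner $x'$ of $x$, and whenever both exist they are themselves adjacent.

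For the direction $\Leftarrow$ I would fix an edge $x\sim y$, set $\beta=1-\alpha$ with $\alpha$ close to $1$, and bound $W_1(\mu_x^\alpha,\mu_y^\alpha)$ from above by an explicit transport plan. Keep the shared mass of $\mu_x^\alpha$ and $\mu_y^\alpha$ in place (all of the common-neighbour mass together with the overlaps at $x$ and $y$), move the bulk excess $\approx 1-\beta$ from $x$ to $y$ along the edge, and clear the remaining $O(\beta)$ discrepancies, which live only on the at-most-two defect vertices $x',y'$, by the short moves $y'\to x'$ (distance $1$, since $x'\sim y'$) and, in the unequal-degree case, by routing a tiny amount through a common neighbour. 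In every case the cost telescopes to exactly $1-\beta=\alpha$, so $W_1\le\alpha$, hence $\kappa_\alpha\ge 1-\alpha$ and $\kappa(x,y)\ge 1$; when $M$ is empty this recovers $\kappa(x,y)=\tfrac{|V|}{|V|-1}$ on $K_{|V|}$.

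For the harder direction $\Rightarrow$ I would argue by contraposition: assuming some vertex $x$ has two non-neighbours $a\ne b$, I must produce an edge of curvature $<1$. After invoking $\mathrm{diam}(G)\le 2$, each of $a,b$ has a common neighbour with $x$, and a short analysis of the neighbourhoods yields an edge $x\sim y$ together with an $x$-private neighbour $p$ and a $y$-private neighbour $q$ lying at graph distance $\ge 2$ (or two private neighbours on one side). The crucial point, and the main obstacle, is that the naive count-based bound $\kappa(x,y)\le\big(2+|N(x)\cap N(y)|\big)/\max(d_x,d_y)$ is too weak to detect this: it holds with equality on $C_5$, where nonetheless $\kappa=\tfrac12<1$. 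I therefore need a distance-sensitive lower bound for $W_1$, obtained from the Kantorovich dual by choosing a $1$-Lipschitz potential that is forced to spread across the distance between $p$ and $q$.

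Carrying this out, in the edge-regular case $d_x=d_y=d$ one is led to the clean identity $\kappa(x,y)=1+\tfrac{1-M(x,y)}{d}$, where $M(x,y)$ is the minimum total length of a perfect matching between the $x$-private and $y$-private neighbours, so that $\kappa(x,y)\ge 1$ exactly forces $M(x,y)\le 1$; the edge selected above then has $M>1$ and hence $\kappa<1$. The remaining work, which I expect to be the most delicate part, is to establish the analogue of this bound when $d_x\ne d_y$ (where the common-neighbour mass contributes additional terms to the transport cost) and to verify in full generality that a vertex with two non-neighbours always forces such a bad edge; combining these completes the contrapositive and thus the theorem.
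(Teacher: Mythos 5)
Your skeleton is essentially the paper's: the backward direction via an explicit low-cost transport plan at large idleness (the paper computes the $\nu_i^{\alpha}$ at $\alpha=\frac{1}{d_y+1}$ and invokes linearity via \thref{relation_curvature}), and the forward direction by contradiction, using Bonnet--Myers to force $diam(G)\leq 2$ and then hunting for a bad edge. Your ``clean identity'' $\kappa(x,y)=1+\frac{1-M(x,y)}{d}$ is not something you need to re-derive from Kantorovich duality: it is exactly \thref{Lin_Lu_Yau_curvature}, quoted from \cite{MH2024}, and can simply be cited. Two small points in your favor: your explicit reduction to finite connected graphs is more careful than the paper, and it is not vacuous --- Bonnet--Myers presupposes connectivity, and two disjoint triangles satisfy $\kappa\equiv\frac{3}{2}$ while $\delta(G)=2<\vert V\vert-2$, so connectedness is genuinely an implicit standing assumption. (Minor: your claimed plan cost of exactly $\alpha$ holds only in the defect case; on $K_{\vert V\vert}$ the optimal cost is strictly smaller, consistent with $\kappa>1$ there, but as an upper bound on $W_1$ this is harmless.)

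The genuine gap is that your forward direction is a plan with two acknowledged holes, and those holes are precisely where the paper's proof lives. First, the ``short analysis of the neighbourhoods'' that is supposed to produce an edge with a private pair at distance at least $2$ (or two private pairs on a side) is the bulk of the paper's argument: starting from a minimum-degree vertex $x$, one must split into $\vert N_{xy}\vert=d_x-1$ and $\vert N_{xy}\vert=d_x-2$; in the latter case one first deduces $d_y=d_x$ and, from the exact formula, $d(z_1,z_2)=1$, and then must separately rule out that the common neighbour $z$ of $x$ and the non-neighbour $i$ coincides with $z_1$ before the contradiction lands on an auxiliary edge. None of this case analysis is carried out in your sketch. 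Second, the ``analogue of the identity when $d_x\neq d_y$'' that you defer cannot exist in the form you want: when the degrees differ, the private neighbourhoods $S_1(x)\setminus B_1(y)$ and $S_1(y)\setminus B_1(x)$ have different cardinalities, so there is no matching quantity $M(x,y)$ to speak of. What is true, and all that is needed, is the one-sided bound $\kappa(x,y)\leq\frac{\vert N_{xy}\vert+2}{\max\{d_x,d_y\}}$, which the paper proves as \thref{upper_bound} by a transport estimate at $\alpha=\frac{1}{d_y+1}$ combined with \thref{relation_curvature}. Relatedly, your $C_5$ objection misidentifies where that counting bound is used: in the paper it is applied only to auxiliary edges $(i,z)$ or $(x,z)$ with $\vert N\vert\leq d-3$, where it already yields $\kappa<1$; the distance-sensitive input is needed exactly once, on the original equal-degree edge, and is supplied by the exact formula. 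Until both deferred items are actually carried out, the implication $Ric(G)\geq 1\implies\delta(G)\geq\vert V\vert-2$ is not established.
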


We then focus on the relationship between the $0$-Ollivier-Ricci curvature $\kappa_{0}$ and the Lin-Lu-Yau curvature $\kappa$. For regular graphs, we derive an exact formula for the difference between these two curvature notions.

\begin{theorem}\thlabel{kappa_vergleich}
    Let $G=(V,E)$ be a locally finite graph. Let $x,y \in V$ be of equal degree $d$ with $x\sim y$. If $\vert S_{1}(x)\cap S_{1}(y)\vert < d-1$, then 
    \begin{equation*}
        \kappa(x,y) - \kappa_{0}(x,y) = \frac{1}{d}\Biggl(3 - \sup_{\phi \in \mathcal{O}_{xy}} \sup_{z \in S_{1}(x)\setminus B_{1}(y)} d(z, \phi(z))\Biggr),
    \end{equation*}
    where $\mathcal{O}_{xy}$ denotes the set of optimal assignments between $S_{1}(x)\setminus B_{1}(y)$ and $S_{1}(y)\setminus B_{1}(x)$. If $\vert S_{1}(x)\cap S_{1}(y) \vert=d-1$, then 
    \begin{equation*}
        \kappa(x,y) - \kappa_{0}(x,y) = \frac{2}{d}.
    \end{equation*}
\end{theorem}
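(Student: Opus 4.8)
The plan is to compute both curvatures through explicit optimal transport and then subtract. Throughout I write $m = \frac{1-\alpha}{d}$ for the neighbour mass, and I partition the neighbourhoods by setting $C = S_{1}(x)\cap S_{1}(y)$, $A = S_{1}(x)\setminus B_{1}(y)$ and $B = S_{1}(y)\setminus B_{1}(x)$. Since $d_{x}=d_{y}=d$ and $y\in S_{1}(x)\setminus S_{1}(y)$, $x\in S_{1}(y)\setminus S_{1}(x)$, one gets $|A|=|B|=d-1-|C|$, so the hypothesis $|C|<d-1$ is exactly $A,B\neq\emptyset$, while $|C|=d-1$ is $A=B=\emptyset$. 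I abbreviate the optimal assignment cost $D=\min_{\phi}\sum_{z\in A}d(z,\phi(z))$ over bijections $\phi\colon A\to B$, and $D^{*}=\sup_{\phi\in\mathcal{O}_{xy}}\sup_{z\in A}d(z,\phi(z))$.

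First I would pin down $\kappa$. For $\alpha\ge\frac{1}{d+1}$ the surplus of $\mu_{x}^{\alpha}$ over $\mu_{y}^{\alpha}$ is $\alpha-m$ on $x$ and $m$ on each $a\in A$, with matching deficits $\alpha-m$ on $y$ and $m$ on each $b\in B$; the plan sending $x\to y$ and transporting $A$ to $B$ along a cost-minimising assignment has cost $(\alpha-m)+mD$. Optimality follows since any rerouting $x\to b$, $a\to y$ costs $4\ge 1+d(a,b)$, so this plan cannot be beaten; I would confirm the lower bound with a matching Kantorovich potential, or invoke the piecewise-linearity of $\alpha\mapsto\kappa_{\alpha}$ on regular edges. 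This gives $\kappa_{\alpha}=(1-\alpha)\frac{d+1-D}{d}$ on $[\frac{1}{d+1},1]$, hence $\kappa(x,y)=\frac{d+1-D}{d}$.

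Next I would compute $\kappa_{0}$. Here every mass equals $\frac1d$, so $W_{1}(\mu_{x}^{0},\mu_{y}^{0})=\frac1d\cdot(\text{min-cost perfect matching})$ between the surplus set $\{y\}\cup A$ and the deficit set $\{x\}\cup B$, using $d(y,x)=d(y,b)=d(a,x)=1$. Such a matching either sends $y\to x$ (leaving a bijection $A\to B$, cost $1+D$) or sends $y\to b_{0}$ and $a_{0}\to x$ (leaving a bijection $A\setminus\{a_{0}\}\to B\setminus\{b_{0}\}$, cost $2+E$), where $E=\min_{a_{0},b_{0}}\mathrm{MinAssign}(A\setminus\{a_{0}\},B\setminus\{b_{0}\})$. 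Thus $d\,W_{1}(\mu_{x}^{0},\mu_{y}^{0})=\min(1+D,2+E)$, and since $\kappa_{0}=1-W_{1}(\mu_{x}^{0},\mu_{y}^{0})$ the difference reduces to $\kappa-\kappa_{0}=\frac1d\bigl(\min(1+D,2+E)-(D-1)\bigr)$.

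The crux is the identity $E=D-D^{*}$. Regluing the removed pair rewrites $E=\min_{\sigma}\bigl(\mathrm{cost}(\sigma)-\max_{z}d(z,\sigma(z))\bigr)$ over all bijections $\sigma\colon A\to B$. The bound $E\le D-D^{*}$ comes from taking the optimal $\sigma$ realising $D^{*}$ and discarding its longest pair. For $E\ge D-D^{*}$ I would split on whether the minimising $\sigma$ is cost-optimal: if it is, its longest pair is $\le D^{*}$ and we are done; if $\mathrm{cost}(\sigma)=D+k$ with $k\ge 1$, then its longest pair $M$ satisfies $M\le 3$ while the remaining $|A|-1$ pairs force $\mathrm{cost}(\sigma)-M\ge|A|-1$, and a short case check (the only danger, $k=1$, $D^{*}=1$, $M=3$, is impossible since it would require $|A|-1$ unit pairs summing to $|A|-2$) yields $M\le D^{*}+k$. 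This is the step I expect to be the main obstacle, since it is where the supremum over optimal assignments enters and where non-optimal matchings must be excluded. Substituting $E=D-D^{*}$ gives $\min(1+D,2+E)=D+2-D^{*}$ (the two arguments coincide exactly when $D^{*}=1$), whence $\kappa-\kappa_{0}=\frac{3-D^{*}}{d}$. Finally, in the degenerate case $|C|=d-1$ one has $A=B=\emptyset$ and $D=0$, so $\kappa=\frac{d+1}{d}$ while the only transport is $y\to x$, giving $\kappa_{0}=\frac{d-1}{d}$ and the stated value $\frac2d$.
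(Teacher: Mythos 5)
Your proof is correct, and it reaches the result by a route that is organized genuinely differently from the paper's, even though the combinatorial heart turns out to be the same. The paper cites \thref{Lin_Lu_Yau_curvature} and \thref{kappa_null} for the two curvature formulas, constructs the single candidate matching $\psi$ sending $j\mapsto x$, $y\mapsto\phi(j)$ (your ``$y\to b_0$, $a_0\to x$'' case with $(a_0,b_0)$ the longest pair of a max-stretch optimal assignment), and proves its optimality by contradiction, using \thref{assumptions_optimal_matching} to normalize a competitor to $\psi'(y)\neq x$. You instead decompose the $\kappa_0$-matching problem explicitly as $d\,W_1(\mu_x^0,\mu_y^0)=\min(1+D,\,2+E)$ and isolate the standalone identity $E=D-D^*$; your regluing argument and the two-case verification of $E\geq D-D^*$ (excess $k$ of $\sigma$ versus its longest pair $M\leq 3$, with the single dangerous configuration $k=1$, $D^*=1$, $M=3$ excluded by counting unit pairs) is precisely the paper's Case 1/Case 2 contradiction in contrapositive form, while your arithmetic $\min(1+D,\,2+D-D^*)=D+2-D^*$ (valid because $D^*\geq 1$) absorbs the role that \thref{assumptions_optimal_matching} plays in the paper. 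What your packaging buys: $E=D-D^*$ is a clean, reusable combinatorial statement, and you obtain the explicit value $d\,W_1(\mu_x^0,\mu_y^0)=D+2-D^*$ rather than only the difference of curvatures; what the paper's buys: it needs no case-split over matching types, only one constructed matching and one exchange lemma. Two small soft spots in your write-up: the optimality argument for your $\alpha$-plan computing $\kappa$ (ruling out only the pairwise rerouting $x\to b$, $a\to y$) is not by itself a complete optimality proof, and your reduction of $W_1(\mu_x^0,\mu_y^0)$ to a min-cost bijection silently uses \thref{dontmove} together with integrality of the transportation polytope; both gaps disappear if you simply cite \thref{Lin_Lu_Yau_curvature} and \thref{kappa_null}, exactly as the paper does. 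Your treatment of the degenerate case $\vert N_{xy}\vert = d-1$ coincides with the paper's.
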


Finally, we study an analog of Ricci-flat manifolds. Ricci-flat Lorentzian manifolds play an important role in theoretical physics as solutions to Einstein's field equations in a vacuum with vanishing cosmological constant. As an analog, one might consider graphs where the Ollivier-Ricci curvature vanishes everywhere. In this work, we impose an even stronger condition. Namely, that the Ollivier-Ricci curvature $\kappa_{\alpha}$ vanishes everywhere for every idleness parameter $\alpha$. We refer to such graphs as \textit{bone-idle}. It turns out that bone-idleness is equivalent to the vanishing of both the 
$0$-Ollivier-Ricci curvature and the Lin-Lu-Yau curvature $\kappa $ everywhere. Therefore, we can apply our results on the relation between the Ollivier-Ricci curvature and the Lin-Lu-Yau curvature. Using this, we characterize edges that are bone-idle in regular graphs. Furthermore, we show that no 3-regular bone-idle graph exists.

\begin{theorem}\thlabel{bone_idle_3_reg}
    Let $G=(V,E)$ be a locally finite graph. Suppose that $G$ is bone-idle, then $G$ is not $3$-regular.
\end{theorem}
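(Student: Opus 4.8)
The plan is to argue by contradiction: assume $G$ is locally finite, $3$-regular and bone-idle. By the equivalence recalled just before the statement, bone-idleness means $\kappa_0(x,y)=0$ and $\kappa(x,y)=0$ for every edge $x\sim y$, so in particular $\kappa(x,y)-\kappa_0(x,y)=0$ on every edge. I would then feed each edge into \thref{kappa_vergleich} with $d=3$. The case $\vert S_1(x)\cap S_1(y)\vert = d-1 = 2$ is eliminated immediately, since the theorem gives $\kappa-\kappa_0=\tfrac23\neq 0$; hence every edge has at most one common neighbour.

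Next I would rule out edges with exactly one common neighbour by a direct transport estimate. If $x$ and $y$ share the single common neighbour $c$, write the remaining neighbours as $a\in S_1(x)$ and $b\in S_1(y)$. Working with the $\alpha=0$ probability measures $\mu_x,\mu_y$ (uniform on $S_1(x)$ resp. $S_1(y)$), leaving the mass at $c$ fixed and using the plan $y\mapsto b$, $a\mapsto x$ (both at distance $1$, since $b\sim y$ and $a\sim x$) shows $W_1(\mu_x,\mu_y)\leq \tfrac23$, so $\kappa_0(x,y)\geq \tfrac13>0$, contradicting bone-idleness. Therefore $G$ is triangle-free and every edge satisfies $\vert S_1(x)\cap S_1(y)\vert =0$.

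For such an edge the supports of $\mu_x$ and $\mu_y$ are disjoint, so every unit of mass travels distance at least $1$ and $W_1(\mu_x,\mu_y)\geq 1$; thus $\kappa_0(x,y)=0$ forces $W_1=1$, i.e. a perfect matching of $S_1(x)=\{y,a_1,a_2\}$ onto $S_1(y)=\{x,b_1,b_2\}$ using only adjacent pairs. Since $a_i\sim x$ and $b_j\sim y$ automatically, such a matching exists iff some $a_i\sim b_j$, i.e. iff the edge $xy$ lies on a $4$-cycle. Simultaneously, $\kappa(x,y)=0$ together with \thref{kappa_vergleich} forces $\sup_{\phi}\sup_z d(z,\phi(z))=3$, i.e. some optimal assignment of $\{a_1,a_2\}$ onto $\{b_1,b_2\}$ contains a pair at distance $3$. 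Combining the two requirements should yield a short classification of the local picture at every edge: up to symmetry the only survivors are (I) exactly one cross edge $a_1\sim b_1$ with $d(a_2,b_2)=3$, and (II) a ``star'' $a_1\sim b_1$, $a_1\sim b_2$ with $d(a_2,b_1)=d(a_2,b_2)=3$; the configurations with two matched cross edges, three, or four cross edges all give $\sup_z d(z,\phi(z))\leq 1$ and hence $\kappa-\kappa_0=\tfrac23$.

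The final and hardest step is to show that no locally finite $3$-regular graph can carry configuration (I) or (II) on \emph{every} edge at once. The idea is to propagate: a cross edge $a_1\sim b_1$ is itself an edge of $G$, and applying the classification to it forces $a_1b_1$ onto the $4$-cycle $x a_1 b_1 y$ and imposes a new distance-$3$ condition, now on the \emph{third} neighbours of $a_1$ and $b_1$. Tracking how the forced distance-$3$ pairs and the mandatory $4$-cycles interact around a single vertex and its neighbours should produce an inconsistency, either a pair of vertices forced to be simultaneously at distance $1$ and distance $3$, or a local configuration incompatible with $3$-regularity and triangle-freeness. Making this propagation argument airtight, rather than the per-edge analysis, is where I expect the real work to lie.
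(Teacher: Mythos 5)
Your reduction to the girth-four local picture is correct and runs essentially parallel to the paper, with slightly different (and in fact more self-contained) tools: the paper disposes of $\vert N_{xy}\vert>0$ directly via \thref{Lin_Lu_Yau_curvature} (the single displaced pair satisfies $d(z_1,z_2)\leq 3$, so $\kappa(x,y)=\tfrac13\bigl(4-d(z_1,z_2)\bigr)>0$) and invokes the external classification \thref{bone_idle_girth_5} to force girth less than five, whereas you use \thref{kappa_vergleich} for $\vert N_{xy}\vert=2$ and a hand-built transport plan for $\vert N_{xy}\vert=1$; both routes are sound. Your per-edge classification for $N_{xy}=\emptyset$ --- at least one cross edge between $\{a_1,a_2\}$ and $\{b_1,b_2\}$ (forced by $\kappa_0=0$, via \thref{kappa_null} or integrality of the assignment problem, and equivalent to a $4$-cycle through $xy$), no distance-one perfect matching (else $\kappa=\tfrac23$), and some optimal assignment containing a distance-$3$ pair (forced by \thref{equality_condition}) --- is exactly the paper's \thref{bone_idle} specialized to $d=3$, i.e.\ $N_{1}(\phi)=1$ and $N_{2}(\phi)=0$, and your cases (I) and (II) are the correct enumeration.

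The genuine gap is the final step, which you explicitly leave unexecuted: ``tracking how the forced distance-$3$ pairs and the mandatory $4$-cycles interact \dots should produce an inconsistency'' is a hope, not an argument, and the specific propagation you propose (following the cross edge $a_1\sim b_1$ and accumulating distance-$3$ constraints on third neighbours) is not obviously terminating. The paper closes this with a short two-case pivot you do not identify. Fix the $4$-cycle $x\,x_1\,y_1\,y$ and normalize, WLOG, $x_2\not\sim y_1$ (legitimate since $x_2\sim y_1$ and $y_2\sim x_1$ cannot both hold, as together they give a distance-one perfect matching on $xy$). Now apply the $4$-cycle requirement not along the cross edge but to the \emph{other} edge $x\sim x_2$: any $4$-cycle through it passes through $x_1$ or through $y$. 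If some $z\in S_1(x_2)\setminus\{x\}$ satisfies $z\sim x_1$, then $y\mapsto y_1$, $x_2\mapsto z$ is a distance-one matching on the edge $x\sim x_1$, so $\kappa(x,x_1)=\tfrac23>0$; otherwise $x_2\sim y_2$, and $x_1\mapsto y_1$, $x_2\mapsto y_2$ is a distance-one matching on $x\sim y$, so $\kappa(x,y)=\tfrac23>0$. Either way Ricci-flatness fails. Note that this contradiction uses only ``every edge lies on a $4$-cycle'' plus ``no edge admits a distance-one perfect matching''; the distance-$3$ data you carry in (I) and (II) is never needed. Until you supply an argument of this kind, your proposal is a correct reduction of the theorem to an unproved combinatorial claim, not a proof.
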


We also provide a complete characterization of 4-regular bone-idle graphs in Section \ref{bone_idle_4_reg} and discuss the existence of 5-regular bone-idle graphs.

We conclude this introduction by providing an outline of the remainder of the paper.  In Section \ref{defs_and_notations} we review the relevant concepts of Graph Theory and Optimal Transport Theory, and introduce the Ollivier-Ricci curvature, as well as its modification by Lin, Lu, and Yau. In Section \ref{lly_geq_1} we provide a complete characterization of all graphs with Lin-Lu-Yau curvature at least one. In Section \ref{relation_curvatures} we explore the relationship between the Lin-Lu-Yau curvature and the $0$-Ollivier-Ricci curvature on regular graphs. Finally, in Section \ref{bone_idleness}, we present our findings on bone-idle graphs.

\section{Definitions and notations}\label{defs_and_notations}

We begin by reviewing some fundamental concepts of Graph Theory and Optimal Transport Theory. We then introduce Ollivier's discrete notion of Ricci curvature on graphs, as well as its modification by Lin, Lu, and Yau.

\subsection{Graph Theory}

A \textit{simple graph} $G=(V,E)$ is an unweighted, undirected graph that contains no multiple edges or self-loops. For two vertices $x,y \in V$ we denote the existence of an edge between $x$ and $y$ by $x \sim y$. For any two vertices $x, y \in V$, the \textit{shortest-path distance} $d(x,y)$ is the number of edges in a shortest path connecting $x$ and $y$. If no such path exists, $d(x,y)$ is defined to be infinity. The \textit{diameter} of $G$ is denoted by $diam(G) = \max_{x,y \in V} d(x,y)$. The \textit{girth} of $G$ is the length of a shortest cycle contained in $G$. If $G$ does not contain any cycles, the girth is defined to be infinity.

For $x \in V$ and $r \in \mathbb{N}$ we define the \textit{$r$-sphere centered at $x$} as $S_{r}(x) = \left\{y \in V: d(x,y) = r\right\}$ and the \textit{$r$-ball centered at $x$} as $B_{r}(x) = \left\{y \in V: d(x,y) \leq r\right\}$. For an edge $x \sim y$, we denote by $N_{xy} = S_{1}(x) \cap S_{1}(y)$ the set of common neighbors of $x$ and $y$.

The \textit{degree} of a vertex $x \in V$ is denoted by $d_{x} = \vert S_{1}(x)\vert$. The minimum degree of a graph $G$ is denoted by $\delta(G) = \min_{x\in V}d_{x}$. The graph $G$ is called \textit{locally finite} if every vertex has finite degree. The graph is said to be \textit{$d$-regular} if every vertex has the same degree $d$.

An \textit{automorphism} of a graph $G=(V,E)$ is a permutation $\sigma$ of the vertex set $V$ such that for any pair of vertices $x,y \in V$, $x\sim y$ if and only if $\sigma(x) \sim \sigma(y)$.  In other words, an automorphism is an isomorphism of $G$ to itself. An \textit{edge automorphism} of $G=(V,E)$ is a permutation of the edge set $E$ that sends edges with a common endpoint into edges with a common endpoint.

A graph is called \textit{vertex-transitive} if, for any two vertices $x,y\in V$, there exists an automorphism $\sigma$ of the graph such that $\sigma(x) = y$. A graph is called \textit{edge-transitive} if, for any two edges $e_{1}, e_{2} \in E$, there exists an edge automorphism $\gamma$ such that $\gamma(e_{1}) = e_{2}$.

We call a graph \textit{symmetric} if it is both vertex-transitive and edge-transitive.

We conclude this section by defining the \textit{Cartesian product} of graphs. Given two graphs $G=(V_{G}, E_{G})$ and $H=(V_{H}, E_{H})$, the Cartesian product $G \square H$, is a graph with vertex set $V_{G} \times V_{H}$. Two vertices $(x_{1},y_{1})$ and $(x_{2},y_{2})$ are adjacent in $G \square H$ if and only if either $x_{1} = x_{2}$ and $y_{1} \sim y_{2} \in E_{H}$, or $x_{1} \sim x_{2} \in E_{G}$ and $y_{1} =y_{2}$.

\subsection{Ollivier-Ricci curvature and its modification}
The Wasserstein distance, a metric defined on the space of probability measures, is a fundamental concept in optimal transport theory.

\begin{definition}[Wasserstein distance]
    Let $G=(V,E)$ be a locally finite graph. Let $\mu_{1}, \mu_{2}$ be two probability measures on $V$. The \textit{Wasserstein distance} between $\mu_{1}$ and $\mu_{2}$ is defined as 
    \begin{equation} \label{eq:1}
        W_{1}(\mu_{1}, \mu_{2}) = \inf_{\pi \in \Pi(\mu_{1}, \mu_{2})} \sum_{x \in V} \sum_{y \in V} d(x,y)  \pi(x,y),
    \end{equation}
    where
    \begin{equation*}
        \Pi(\mu_{1}, \mu_{2}) = \left \{ \pi: V \times V \to [0,1]: \sum_{y \in V} \pi(x,y)=\mu_{1}(x), \; \sum_{x \in V} \pi(x,y)=\mu_{2}(y) \right \}. 
    \end{equation*}
\end{definition}

Intuitively, imagine two distributions given by $\mu_{1}$ and $\mu_{2}$ as piles of earth. The Wasserstein distance measures the minimal effort required to transform one pile of earth into another. We call $\pi \in \Pi(\mu_{1}, \mu_{2})$ a \textit{transport plan} and if the infimum in \ref{eq:1} is attained, we call $\pi$ an \textit{optimal transport plan} transporting $\mu_{1}$ to $\mu_{2}$. 

It is a well-known fact in optimal transport theory that no mass needs to be moved when it is shared between the two probability measures.

\begin{lemma}\thlabel{dontmove}
    Let $G=(V,E)$ be a locally finite graph. Let $\mu_{1}, \mu_{2}$ be two probability measures on $V$. Then there exists an optimal transport plan $\pi$ transporting $\mu_{1}$ to $\mu_{2}$ satisfying
    \begin{equation*}
        \pi(x,x) = \min\{\mu_{1}(x), \mu_{2}(x)\}
    \end{equation*}
    for all $x\in V$.
\end{lemma}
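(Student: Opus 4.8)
The plan is to start from an arbitrary optimal transport plan and repeatedly reroute mass through the diagonal, exploiting the fact that $d(x,x)=0$ together with the triangle inequality, so that each rerouting step increases the total mass parked on the diagonal without ever increasing the transport cost. Since the starting plan is optimal, the cost cannot decrease, so every rerouted plan stays optimal; the objective is to drive the diagonal up to its maximal possible value $\min\{\mu_{1}(x),\mu_{2}(x)\}$ at every vertex.

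First I would record the trivial upper bound. For any $\pi \in \Pi(\mu_{1},\mu_{2})$ and any $x$, the marginal constraints give $\pi(x,x) \le \sum_{y} \pi(x,y) = \mu_{1}(x)$ and $\pi(x,x) \le \sum_{w}\pi(w,x)=\mu_{2}(x)$, hence $\pi(x,x)\le \min\{\mu_{1}(x),\mu_{2}(x)\}$ and so $\sum_{x} \pi(x,x) \le \sum_{x} \min\{\mu_{1}(x),\mu_{2}(x)\}$. Thus the assertion of the lemma is precisely that the diagonal mass attains this upper bound.

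The heart of the argument is the rerouting step. Suppose $\pi$ is optimal and $\pi(x_{0},x_{0}) < \min\{\mu_{1}(x_{0}),\mu_{2}(x_{0})\}$ for some vertex $x_{0}$. Then $\mu_{1}(x_{0}) > \pi(x_{0},x_{0})$ forces some $y \ne x_{0}$ with $\pi(x_{0},y)>0$, and $\mu_{2}(x_{0})>\pi(x_{0},x_{0})$ forces some $w \ne x_{0}$ with $\pi(w,x_{0})>0$. Setting $\varepsilon = \min\{\pi(x_{0},y),\pi(w,x_{0})\}>0$, I would define a new plan $\tilde{\pi}$ by decreasing $\pi(x_{0},y)$ and $\pi(w,x_{0})$ each by $\varepsilon$ and increasing $\pi(x_{0},x_{0})$ and $\pi(w,y)$ each by $\varepsilon$. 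Checking the row sums at $x_{0},w$ and the column sums at $x_{0},y$ shows $\tilde{\pi}\in \Pi(\mu_{1},\mu_{2})$, and the cost changes by $\varepsilon\bigl(d(w,y)-d(x_{0},y)-d(w,x_{0})\bigr)\le 0$ by the triangle inequality (using $d(x_{0},x_{0})=0$). Optimality of $\pi$ forces this change to be zero, so $\tilde{\pi}$ is again optimal; meanwhile no diagonal entry is ever decreased (the two entries that shrink, $\pi(x_{0},y)$ and $\pi(w,x_{0})$, are off-diagonal), while $\tilde{\pi}(x_{0},x_{0})$ strictly increases, so $\sum_{x}\tilde{\pi}(x,x)$ strictly increases.

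Finally I would conclude by a maximality argument: among all optimal plans, consider one, $\pi^{\ast}$, maximizing the total diagonal mass $\sum_{x}\pi(x,x)$. If $\pi^{\ast}(x_{0},x_{0})<\min\{\mu_{1}(x_{0}),\mu_{2}(x_{0})\}$ for some $x_{0}$, the rerouting step yields an optimal plan with strictly larger diagonal mass, contradicting maximality; hence $\pi^{\ast}(x,x)=\min\{\mu_{1}(x),\mu_{2}(x)\}$ for all $x$. The main obstacle is guaranteeing that such a maximizer exists. When $\mu_{1},\mu_{2}$ are finitely supported — which is the relevant case for the Ollivier-Ricci measures on a locally finite graph — the optimization is finite-dimensional over a compact polytope and the maximizer exists at once; equivalently, one may simply iterate the rerouting step, saturating one vertex's diagonal at a time, and terminate after finitely many steps since a saturated diagonal entry is never decreased by subsequent reroutings. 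In full generality one appeals to weak compactness of the set of couplings to extract the maximizer, which is the only point requiring genuine technical care.
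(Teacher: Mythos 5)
Your argument is correct, and there is in fact nothing in the paper to compare it against: the paper states \thref{dontmove} without proof, as a well-known fact from optimal transport theory. Your cycle-rerouting proof is the standard way to establish it, and all the key steps check out: the marginal constraints give the upper bound $\pi(x,x)\le\min\{\mu_1(x),\mu_2(x)\}$; the modified plan $\tilde\pi$ has the correct row and column sums; the cost change $\varepsilon\bigl(d(w,y)-d(x_0,y)-d(w,x_0)\bigr)\le 0$ follows from the triangle inequality, so optimality is preserved while the diagonal mass strictly increases. Two minor points worth making explicit: in the degenerate case $w=y$ the entry you increase, $\pi(w,y)=\pi(w,w)$, is itself diagonal, which only helps your monotonicity claim; and since each rerouting step increases only $\pi(x_0,x_0)$ and $\pi(w,y)$ with $w,y\neq x_0$ while zeroing at least one off-diagonal entry in row or column $x_0$, the iterative saturation argument you sketch does terminate in finitely many steps per vertex, as you assert. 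Your closing remark correctly identifies the only place needing care (existence of a diagonal-maximizing optimal plan for general measures, via compactness of $\Pi(\mu_1,\mu_2)$ under pointwise convergence with tight marginals), and correctly observes that for the paper's application this is moot: the measures $\mu_x^{\alpha}$ are supported on the finite set $B_1(x)$, so the finite-dimensional compactness argument suffices.
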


To introduce Ollivier's notion of Ricci curvature on graphs, we define the probability measures $\mu_{x}^{\alpha}$ for $x \in V$ and $\alpha \in [0,1]$ by
\begin{equation*}
    \mu_{x}^{\alpha}(y) =
    \begin{cases}
        \alpha, & \text{if $y = x$;}\\
        \frac{1-\alpha}{d_{x}}, & \text{if $y \sim x$;}\\
        0, & \text{otherwise.}
    \end{cases}
\end{equation*}

Then, the Ollivier-Ricci curvature is defined as follows.

\begin{definition}[Ollivier-Ricci curvature]
    Let $G=(V,E)$ be a locally finite graph. We define the \textit{$\alpha$-Ollivier-Ricci curvature} of an edge $x \sim y$ by
    \begin{equation*}
        \kappa_{\alpha}(x,y) = 1 - W_{1}(\mu_{x}^{\alpha}, \mu_{y}^{\alpha}).
    \end{equation*}
    The parameter $\alpha$ is called the \textit{idleness}.
\end{definition}

We present a more intuitive formula for the Ollivier-Ricci curvature, as provided in \cite{Eidi2020}. Let $\nu_{i}^{\alpha}$ be the mass transported with distance $i$ under an optimal transport plan transporting $\mu_{x}^{\alpha}$ to $\mu_{y}^{\alpha}$. Then 
\begin{equation*}
    \sum_{i=0}^{3} \nu_{i}^{\alpha} = 1 \quad \text{and} \quad W_{1}(\mu_{x}^{\alpha}, \mu_{y}^{\alpha}) = \sum_{i=1}^{3} i \nu_{i}^{\alpha}.
\end{equation*}
Therefore, we obtain 
\begin{equation*}
    \kappa_{\alpha}(x,y) = \nu_{0}^{\alpha} - \nu_{2}^{\alpha} - 2\nu_{3}^{\alpha}.
\end{equation*}

Ollivier considered idleness parameters $\alpha = 0$ and $\alpha = \frac{1}{2}$. In \cite{Bourne2018}, the authors study the Ollivier-Ricci curvature as a function of the idleness parameter. To this end, they introduced the \textit{Ollivier-Ricci idleness function} $\alpha \to \kappa_{\alpha}(x,y)$. It was first shown by Lin, Lu, and Yau in \cite{LLY2011}, that the idleness function is concave. Using that $\kappa_{1}(x,y) = 0$, this implies that the function $h(\alpha) = \frac{\kappa_{\alpha}(x,y)}{1-\alpha}$ is increasing over the interval $[0,1)$. They further showed that $h(\alpha)$ is bounded and thus, the limit $\lim_{\alpha \to 1} h(\alpha)$ exists. Lin, Lu, and Yau used this result to introduce a modified version of the Ollivier-Ricci curvature that does not depend on the idleness.

\begin{definition}[Lin-Lu-Yau curvature]
    Let $G=(V,E)$ be a locally finite graph. The \textit{Lin-Lu-Yau curvature} of an edge $x \sim y$ is defined as 
    \begin{equation*}
        \kappa(x,y) = \lim_{\alpha \to 1} \frac{\kappa_{\alpha}(x,y)}{1-\alpha}.
    \end{equation*}
\end{definition}

\begin{remark}
    Observe that $\kappa_{1}(x,y) = 0$ for any edge $x\sim y$. Thus, $\kappa(x,y)$ is the negative of the derivative of $\kappa_{\alpha}(x,y)$ with respect to the idleness parameter in $\alpha=1$.
\end{remark}

In what follows, we write $Ric(G) \geq k$ ($Ric(G) = k$) if $\kappa(x,y) \geq k$ ($\kappa(x,y) = k$) for all edges $x\sim y$ in $G$.

Bourne et al. \cite{Bourne2018} showed that the idleness function is piecewise linear with at most 3 linear parts. They also derived the length of the last linear part.

\begin{theorem}[\cite{Bourne2018}, Theorem 4.4]
    Let $G=(V,E)$ be a locally finite graph and let $x,y \in V$ with $x \sim y$ and $d_{x} \geq d_{y}$. Then $\alpha \to \kappa_{\alpha}(x,y)$ is linear over $\left[\frac{1}{d_{x} + 1}, 1\right].$
\end{theorem}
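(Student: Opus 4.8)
Since $\kappa_{\alpha}(x,y) = 1 - W_{1}(\mu_{x}^{\alpha}, \mu_{y}^{\alpha})$, it suffices to prove that the map $\alpha \mapsto W_{1}(\mu_{x}^{\alpha}, \mu_{y}^{\alpha})$ is affine on $[\frac{1}{d_{x}+1}, 1]$. Write $d = d_{x} \geq d_{y} = d'$. The plan is to exhibit, simultaneously for \emph{every} $\alpha$ in this interval, a single transport plan and a single $1$-Lipschitz potential whose common value is an explicit affine function of $\alpha$; optimality then follows from Kantorovich duality, and the affine formula holds throughout. The role of the threshold $\frac{1}{d_{x}+1}$ is that it is exactly the value at which the self-mass $\mu_{x}^{\alpha}(x) = \alpha$ equals the neighbour-mass $\mu_{x}^{\alpha}(z) = \frac{1-\alpha}{d}$ for $z \sim x$; for $\alpha \geq \frac{1}{d_{x}+1}$ the self-mass at the larger-degree endpoint dominates every neighbour-mass, and this is precisely what makes one combinatorial transport pattern valid all the way up to $\alpha = 1$.

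First I would apply Lemma \thref{dontmove} to leave the shared mass in place, keeping the common value $\min\{\mu_{x}^{\alpha}(z), \mu_{y}^{\alpha}(z)\}$ at $x$, at $y$, and at each common neighbour in $N_{xy}$. After this cancellation the signed measure $\mu_{x}^{\alpha} - \mu_{y}^{\alpha}$ has its positive part supported on $\{x\} \cup (S_{1}(x)\setminus B_{1}(y))$ and its negative part on $\{y\} \cup (S_{1}(y)\setminus B_{1}(x))$ together with the under-filled common neighbours. I would then route the net excess $\alpha - \frac{1-\alpha}{d'}$ at $x$ directly to $y$ along the edge $x \sim y$ (distance $1$), and transport the remaining uniform neighbour masses $\frac{1-\alpha}{d}$ and $\frac{1-\alpha}{d'}$ from $S_{1}(x)\setminus B_{1}(y)$ into $S_{1}(y)\setminus B_{1}(x)$ and the under-filled common neighbours along shortest paths, using an optimal assignment (the set $\mathcal{O}_{xy}$ of Theorem \ref{kappa_vergleich}). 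Every term of the resulting cost is either the affine contribution of the core $x \to y$ transport or a fixed multiple of $\frac{1-\alpha}{d}$ or $\frac{1-\alpha}{d'}$ coming from the neighbour masses; hence the total cost is affine in $\alpha$, which yields the desired upper bound for $W_{1}$.

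For the matching lower bound I would construct a single $1$-Lipschitz function $f^{*}$ with $f^{*}(x) - f^{*}(y) = d(x,y) = 1$ that is tight against the above plan, meaning $f^{*}(a) - f^{*}(b) = d(a,b)$ on every transported pair $(a,b)$; then $\sum_{z} f^{*}(z)\bigl(\mu_{x}^{\alpha}(z) - \mu_{y}^{\alpha}(z)\bigr)$ equals the plan's cost for every $\alpha$, certifying both optimality and the affine formula on the whole interval. The main obstacle is exactly this dual construction together with the complementary-slackness check: when $d_{x} > d_{y}$ the combinatorial form of the optimal plan changes at the interior point $\alpha = \frac{1}{d_{y}+1}$, where the self-mass at $y$ overtakes its neighbour-mass so that $x$ switches between being a net source and a net sink of its own mass, and the genuine content of the theorem is that the slope nevertheless does not change there. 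I would resolve this by showing that one and the same potential $f^{*}$ stays optimal on both sub-intervals $[\frac{1}{d_{x}+1}, \frac{1}{d_{y}+1}]$ and $[\frac{1}{d_{y}+1}, 1]$, so the two affine pieces share a common slope and join into a single affine function; I would also verify that rerouting the core mass through a neighbour is never cheaper than the direct edge $x \sim y$, so that no strictly better plan exists.
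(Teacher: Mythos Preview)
The paper does not provide its own proof of this statement; it is cited from \cite{Bourne2018} as a known result (Theorem 4.4 there) and then used as a black box to deduce \thref{relation_curvature}. So there is nothing in the present paper to compare your proposal against.

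On the substance of your outline: the duality strategy is sound in principle. If a single $1$-Lipschitz potential $f^{*}$ achieves the supremum in the Kantorovich dual for every $\alpha$ in the interval, then $W_{1}(\mu_{x}^{\alpha},\mu_{y}^{\alpha}) = \sum_{z} f^{*}(z)\bigl(\mu_{x}^{\alpha}(z)-\mu_{y}^{\alpha}(z)\bigr)$ is manifestly affine in $\alpha$, since the measures themselves depend affinely on $\alpha$. The genuine gap is that you never actually construct $f^{*}$: you only assert that it ``stays optimal on both sub-intervals $[\frac{1}{d_{x}+1},\frac{1}{d_{y}+1}]$ and $[\frac{1}{d_{y}+1},1]$,'' which is precisely the content to be proved. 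Writing down such an $f^{*}$ in the non-regular case $d_{x}>d_{y}$ requires more than specifying values on $\{x,y\}$ and along the optimal-assignment pairs; one has to extend it to all of $B_{1}(x)\cup B_{1}(y)$ while keeping the $1$-Lipschitz constraint and maintaining complementary slackness against \emph{two different} transport plans (the one valid below $\frac{1}{d_{y}+1}$, where $x$ is a net sink for its own mass since $\mu_{x}^{\alpha}(x)=\alpha<\frac{1-\alpha}{d_{y}}=\mu_{y}^{\alpha}(x)$, and the one valid above). That extension is where the real combinatorics lives, and your proposal defers it entirely. As written, you have correctly identified the architecture of the argument and the crux (the crossover at $\alpha=\frac{1}{d_{y}+1}$), but not supplied the proof.
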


Thus, we obtain the following relation between the $\alpha$-Ollivier-Ricci curvature and its modification by Lin, Lu, and Yau as an immediate consequence of the mean value theorem.

\begin{theorem}\thlabel{relation_curvature}
    Let $G=(V,E)$ be a locally finite graph and let $x,y \in V$ with $x \sim y$ and $d_{x} \geq d_{y}$. Then
    \begin{equation*}
        \kappa_{\alpha}(x,y) = (1-\alpha)\kappa(x,y)
    \end{equation*}
    for $\alpha \in \left[\frac{1}{d_{x} +1},1\right]$.
\end{theorem}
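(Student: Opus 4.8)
The plan is to read off the claimed identity directly from the preceding theorem of Bourne et al.\ \cite{Bourne2018}, which asserts that the idleness function $\alpha \mapsto \kappa_{\alpha}(x,y)$ is linear on the interval $I = \left[\frac{1}{d_{x}+1}, 1\right]$, together with the two facts already recorded in the excerpt: that $\kappa_{1}(x,y) = 0$, and that $\kappa(x,y)$ is by definition the limit $\lim_{\alpha \to 1} \frac{\kappa_{\alpha}(x,y)}{1-\alpha}$.

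First I would fix the edge $x \sim y$ with $d_{x} \geq d_{y}$ and abbreviate $f(\alpha) = \kappa_{\alpha}(x,y)$. By the cited linearity, there are constants $a, b \in \mathbb{R}$ with $f(\alpha) = a\alpha + b$ for every $\alpha \in I$. Imposing the boundary value $f(1) = \kappa_{1}(x,y) = 0$ yields $b = -a$, so that $f(\alpha) = -a(1-\alpha)$ throughout $I$. It then remains only to identify the slope $-a$ with the Lin-Lu-Yau curvature. For every $\alpha \in \left[\frac{1}{d_{x}+1}, 1\right)$ the relevant quotient equals
\begin{equation*}
    \frac{f(\alpha)}{1-\alpha} = \frac{-a(1-\alpha)}{1-\alpha} = -a,
\end{equation*}
which is constant in $\alpha$; letting $\alpha \to 1$ and invoking the definition of $\kappa$ gives $\kappa(x,y) = -a$. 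Substituting back produces $f(\alpha) = \kappa(x,y)(1-\alpha)$ for all $\alpha \in I$. Equivalently, since $f$ is affine its difference quotient equals its derivative everywhere, and the mean value theorem identifies this common slope with the one-sided derivative at $\alpha = 1$, which is precisely $-\kappa(x,y)$.

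I expect no genuine obstacle in this argument, as all of its substance is already contained in the quoted linearity theorem; what remains is elementary manipulation of an affine function combined with the definition of $\kappa$. The only point requiring a word of care is the endpoint $\alpha = 1$, where the quotient $\frac{f(\alpha)}{1-\alpha}$ is undefined and the identity must instead be read as $f(1) = \kappa(x,y)\cdot 0 = 0$; this holds automatically because $\kappa_{1}(x,y) = 0$, so the stated formula extends to the closed interval.
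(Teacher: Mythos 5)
Your argument is correct and is essentially the paper's own: the paper derives this identity directly from the Bourne et al.\ linearity theorem together with $\kappa_{1}(x,y)=0$ and the definition of $\kappa$ as the limit of $\kappa_{\alpha}(x,y)/(1-\alpha)$, which is precisely what you spell out. Your explicit computation of the affine form $f(\alpha) = -a(1-\alpha)$ and the endpoint remark at $\alpha = 1$ merely make the paper's one-line ``immediate consequence of the mean value theorem'' fully explicit.
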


Hence, the Lin-Lu-Yau curvature coincides up to a scaling factor with the $\alpha$-Ollivier-Ricci curvature for large values of $\alpha$. 

For regular graphs, the optimal transport problem reduces to an optimal assignment problem between subsets of the $1$-spheres.  In \cite{MH2024}, the author uses this observation to derive a simplified formula for the Lin-Lu-Yau curvature. To formalize this approach, we introduce the concept of an optimal assignment.

\begin{definition}[Optimal assignment]
    Let $G=(V,E)$ be a locally finite graph. Let $x,y \in V$ be of equal degree $d$ with $x \sim y$. We call a bijection $\phi: S_{1}(x)\setminus B_{1}(y) \to S_{1}(y)\setminus B_{1}(x)$ an \textit{assignment} between $S_{1}(x)\setminus B_{1}(y)$ and $S_{1}(y)\setminus B_{1}(x)$. Denote by $\mathcal{A}_{xy}$ the set of all such assignments. We call $\phi \in \mathcal{A}_{xy}$ an \textit{optimal assignment} if 
    \begin{equation*}
        \sum_{z \in S_{1}(x)\setminus B_{1}(y)} d(z,\phi(z)) = \inf_{\psi \in \mathcal{A}_{xy}} \sum_{z \in S_{1}(x)\setminus B_{1}(y)} d(z,\psi(z)).
    \end{equation*}
    The set of all optimal assignments between \( S_{1}(x) \setminus B_{1}(y) \) and \( S_{1}(y) \setminus B_{1}(x) \) is denoted by \( \mathcal{O}_{xy} \).
\end{definition}

\begin{remark}
    Observe that the condition that $x$ and $y$ have the same degree is necessary to ensure that $\vert S_{1}(x)\setminus B_{1}(y) \vert = \vert S_{1}(y)\setminus B_{1}(x) \vert$.
\end{remark}

\begin{theorem}[\cite{MH2024}, Theorem 4.3]\thlabel{Lin_Lu_Yau_curvature}
    Let $G=(V,E)$ be a locally finite graph. Let $x,y \in V$ be of equal degree $d$ with $x \sim y$. Then the Lin-Lu-Yau curvature
    \begin{equation*}
        \kappa(x,y) = \frac{1}{d}\Biggl(d+1 - \inf_{\phi \in \mathcal{A}_{xy}} \mathlarger{\sum}_{z \in S_{1}(x) \setminus B_{1}(y)}d(z, \phi(z)) \Biggr),
    \end{equation*}
\end{theorem}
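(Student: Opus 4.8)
The plan is to evaluate the idleness function at the left endpoint of its final linear piece and then rescale. Since $d_{x} = d_{y} = d$, \thref{relation_curvature} gives $\kappa_{\alpha}(x,y) = (1-\alpha)\kappa(x,y)$ for every $\alpha \in \left[\frac{1}{d+1}, 1\right]$. In particular, choosing $\alpha = \frac{1}{d+1}$ and using $1-\alpha = \frac{d}{d+1}$, it suffices to prove that
\begin{equation*}
    W_{1}\left(\mu_{x}^{1/(d+1)}, \mu_{y}^{1/(d+1)}\right) = \frac{1}{d+1} \inf_{\phi \in \mathcal{A}_{xy}} \sum_{z \in S_{1}(x)\setminus B_{1}(y)} d(z, \phi(z)),
\end{equation*}
since then, writing $A$ for the optimal assignment cost, we get $\kappa_{\alpha}(x,y) = 1 - W_{1} = \frac{1}{d+1}(d+1 - A)$, and dividing by $1-\alpha = \frac{d}{d+1}$ yields exactly the claimed formula.

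The key observation is that at $\alpha = \frac{1}{d+1}$ both measures are uniform: since $\alpha = \frac{1-\alpha}{d} = \frac{1}{d+1}$, the measure $\mu_{x}^{\alpha}$ assigns mass $\frac{1}{d+1}$ to each of the $d+1$ vertices of $B_{1}(x)$, and likewise $\mu_{y}^{\alpha}$ is uniform on $B_{1}(y)$. The two balls share precisely the vertices $B_{1}(x) \cap B_{1}(y) = \{x,y\} \cup N_{xy}$, and on each shared vertex both measures carry the same mass $\frac{1}{d+1}$. By \thref{dontmove} there is an optimal transport plan leaving all of this shared mass in place; because it exhausts both the source supply and the sink demand at every shared vertex, the residual problem is to transport the uniform measure of weight $\frac{1}{d+1}$ on $S_{1}(x)\setminus B_{1}(y)$ to the uniform measure of the same weight on $S_{1}(y)\setminus B_{1}(x)$. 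These two sets have equal cardinality precisely because $d_{x} = d_{y}$, as noted in the remark preceding the theorem.

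It remains to identify this residual transport cost with the optimal assignment. Here all masses are equal, so a transport plan between the two residual measures is $\frac{1}{d+1}$ times a doubly stochastic matrix indexed by the two sets. The cost is linear in the plan, hence minimized at an extreme point of the transport polytope; by the Birkhoff--von Neumann theorem these extreme points are exactly the permutation matrices, i.e.\ the bijections $\phi \in \mathcal{A}_{xy}$. Thus the residual cost equals $\frac{1}{d+1} \inf_{\phi \in \mathcal{A}_{xy}} \sum_{z} d(z, \phi(z))$, which establishes the displayed identity for $W_{1}$ and completes the argument. Since $G$ is locally finite the spheres are finite, so $\mathcal{A}_{xy}$ is a finite set and the infimum is attained as a minimum, avoiding any convergence issues.

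The step I expect to require the most care is this last reduction. One must argue both that the plan supplied by \thref{dontmove} genuinely restricts to a transport plan between the two residual uniform measures, so that no mass leaks back onto the shared vertices $\{x,y\}\cup N_{xy}$, and that minimizing a linear functional over the resulting transport polytope is attained at a permutation. The first point follows from the fact that the shared mass is fully matched (source and sink coincide there at $\frac{1}{d+1}$), leaving exactly $S_{1}(x)\setminus B_{1}(y)$ as residual support on the source side and $S_{1}(y)\setminus B_{1}(x)$ on the sink side; the second is precisely the Birkhoff--von Neumann structure of the uniform transport polytope. Everything else is the bookkeeping of the two scaling factors $\frac{1}{d+1}$ coming from \thref{relation_curvature} and from the uniform masses.
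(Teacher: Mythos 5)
Your proof is correct and follows essentially the same route as the cited source: the paper states this as \cite[Theorem 4.3]{MH2024} and explicitly attributes it to the observation that for regular graphs the transport problem reduces to an optimal assignment between $S_{1}(x)\setminus B_{1}(y)$ and $S_{1}(y)\setminus B_{1}(x)$, which is exactly your reduction at $\alpha = \frac{1}{d+1}$ via \thref{relation_curvature}, \thref{dontmove}, and Birkhoff--von Neumann. Your handling of the two delicate points (no mass leaking back onto the shared vertices, and extremality of permutation plans) is sound.
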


A similar formula holds true for the Ollivier-Ricci curvature in the case of vanishing idleness, i.e., $\alpha = 0$.

\begin{theorem}[\cite{MH2024}, Theorem 4.8]\thlabel{kappa_null}
    Let $G=(V,E)$ be a locally finite graph. Let $x,y \in V$ be of equal degree $d$ with $x\sim y$. Then 
    \begin{equation*}
        \kappa_{0}(x,y) = \frac{1}{d}\Biggl(d - \inf_{\phi} \mathlarger{\sum}_{z \in S_{1}(x)\setminus S_{1}(y)}d(z, \phi(z))\Biggr),
    \end{equation*}
    where the infimum is taken over all bijections $\phi$ between $S_{1}(x)\setminus S_{1}(y)$ and $S_{1}(y)\setminus S_{1}(x)$.
\end{theorem}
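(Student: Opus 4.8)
The plan is to compute $W_1(\mu_x^0,\mu_y^0)$ directly from the definition $\kappa_0(x,y) = 1 - W_1(\mu_x^0,\mu_y^0)$ and to reduce the resulting optimal transport problem to an optimal assignment between the two difference sets. At idleness $\alpha = 0$ the measure $\mu_x^0$ is uniform on $S_1(x)$ with weight $\frac1d$ at each neighbour, and likewise $\mu_y^0$ is uniform on $S_1(y)$; in particular there is no mass at the centres $x,y$, which is precisely why the relevant sets here are $S_1(x)\setminus S_1(y)$ and $S_1(y)\setminus S_1(x)$ rather than the $B_1$-based sets appearing in \thref{Lin_Lu_Yau_curvature}. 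First I would invoke \thref{dontmove} to select an optimal transport plan $\pi$ with $\pi(z,z) = \min\{\mu_x^0(z),\mu_y^0(z)\}$. Evaluating this minimum shows that it equals $\frac1d$ exactly on the common neighbours $z\in N_{xy}=S_1(x)\cap S_1(y)$ and $0$ elsewhere, so that all mass sitting on common neighbours is left in place at zero cost.

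It then remains to transport the residual mass, which lives with weight $\frac1d$ on each of the $d-\vert N_{xy}\vert$ vertices of $S_1(x)\setminus S_1(y)$ and must arrive with weight $\frac1d$ on each of the equally many vertices of $S_1(y)\setminus S_1(x)$. The key step is to recognise this residual problem as an optimal transport between two uniform measures of equal total mass supported on sets of equal cardinality $k = d - \vert N_{xy}\vert$. After scaling by $d$, any admissible residual plan becomes a $k\times k$ doubly stochastic matrix, the transport cost is a linear functional of its entries, and the feasible region is the Birkhoff polytope. By the Birkhoff--von Neumann theorem this polytope is the convex hull of permutation matrices, so a linear functional attains its minimum at a vertex, that is, at a permutation. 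This identifies the optimal residual plan (up to the scaling $\frac1d$) with a bijection $\phi\colon S_1(x)\setminus S_1(y)\to S_1(y)\setminus S_1(x)$ and yields $W_1(\mu_x^0,\mu_y^0) = \frac1d\, \inf_\phi \sum_{z\in S_1(x)\setminus S_1(y)} d(z,\phi(z))$, which, substituted into $\kappa_0 = 1 - W_1$, gives the claimed formula. Since $S_1(x)$ and $S_1(y)$ are finite, everything is finite-dimensional and the infimum is attained.

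The main obstacle is to make the reduction to bijections airtight in both directions. The easy direction is the upper bound: every bijection $\phi$ gives an admissible plan (fix the common mass, send the weight $\frac1d$ at each $z$ to $\phi(z)$) of cost $\frac1d\sum_z d(z,\phi(z))$, so $W_1$ is at most the infimum over assignments. The delicate direction is the lower bound, where one must rule out that some transport plan beats the best bijection; here the Birkhoff argument is essential, since a priori an optimal plan could split the mass of a single source vertex among several sinks. Writing the optimal residual plan as a convex combination $\sum_j \lambda_j P_j$ of permutation matrices and using linearity of the cost shows that at least one $P_j$ is no more expensive, delivering the matching lower bound. A minor point to verify carefully is that restricting to plans that fix the common mass, as guaranteed by \thref{dontmove}, does not discard the optimum, which is exactly the content of that lemma.
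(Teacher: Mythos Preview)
The paper does not contain a proof of this statement; it is quoted from \cite{MH2024} (Theorem~4.8) and used as a black box. Your argument is correct and is the natural one: invoking \thref{dontmove} fixes the mass $\tfrac1d$ on each common neighbour at zero cost, and since for $z\in N_{xy}$ this already saturates both marginals, the residual coupling is forced to live on $(S_1(x)\setminus S_1(y))\times(S_1(y)\setminus S_1(x))$ with all row and column sums equal to $\tfrac1d$; scaling by $d$ gives a doubly stochastic matrix, and Birkhoff--von Neumann reduces the linear minimisation to permutation matrices, i.e.\ bijections. You have addressed both inequalities and correctly noted that \thref{dontmove} guarantees no optimum is lost by fixing the shared mass.
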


\section{Lin-Lu-Yau curvature at least one}\label{lly_geq_1}

In this section, we characterize all graphs for which the Lin-Lu-Yau curvature is greater than or equal to one. To this end, we first establish the following upper bound on the Lin-Lu-Yau curvature.

\begin{theorem}\thlabel{upper_bound}
    Let $G=(V,E)$ be a locally finite graph and let $x,y \in V$ with $x \sim y$ and $d_{x} \geq d_{y}$. Then
    \begin{equation*}
        \kappa(x,y) \leq \frac{\vert N_{xy} \vert + 2}{d_{x}}.
    \end{equation*}
\end{theorem}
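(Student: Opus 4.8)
The plan is to bound $\kappa(x,y)$ through the $\alpha$-Ollivier-Ricci curvature, using the limit definition $\kappa(x,y) = \lim_{\alpha\to1}\kappa_{\alpha}(x,y)/(1-\alpha)$. Since $\kappa_{\alpha}(x,y) = 1 - W_{1}(\mu_{x}^{\alpha},\mu_{y}^{\alpha})$, an upper bound on $\kappa_{\alpha}$ is the same as a lower bound on the Wasserstein distance $W_{1}(\mu_{x}^{\alpha},\mu_{y}^{\alpha})$. I would obtain such a lower bound directly from the primal formulation of $W_{1}$: for any transport plan $\pi$ and any $1$-Lipschitz function $f$, the inequality $d(a,b) \geq f(a) - f(b)$ gives $\sum_{a,b} d(a,b)\pi(a,b) \geq \sum_{a} f(a)\mu_{x}^{\alpha}(a) - \sum_{b} f(b)\mu_{y}^{\alpha}(b)$, and taking the infimum over $\pi$ yields $W_{1}(\mu_{x}^{\alpha},\mu_{y}^{\alpha}) \geq \sum_{z} f(z)\bigl(\mu_{x}^{\alpha}(z) - \mu_{y}^{\alpha}(z)\bigr)$. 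This is the easy half of Kantorovich duality and needs nothing beyond the definition of $W_{1}$.

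The key step is to choose the test function $f(z) = d(y,z)$, the distance to $y$ (not to $x$); this is what forces $d_{x}$ into the denominator and makes the bound sharp under the hypothesis $d_{x} \geq d_{y}$. With this choice the computation is routine. I would partition $S_{1}(x)$ into $\{y\}$, the common neighbors $N_{xy}$, and the remaining neighbors $S_{1}(x)\setminus B_{1}(y)$, on which $f$ takes the values $0$, $1$, and $2$ respectively; since $\vert S_{1}(x)\setminus B_{1}(y)\vert = d_{x} - 1 - \vert N_{xy}\vert$, this gives $\sum_{z\in S_{1}(x)} f(z) = 2d_{x} - 2 - \vert N_{xy}\vert$ and hence $\sum_{z} f(z)\mu_{x}^{\alpha}(z) = \alpha + \frac{1-\alpha}{d_{x}}(2d_{x} - 2 - \vert N_{xy}\vert)$. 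On the $y$ side every neighbor of $y$ lies at distance $1$ from $y$, so $\sum_{z} f(z)\mu_{y}^{\alpha}(z) = \frac{1-\alpha}{d_{y}}\cdot d_{y} = 1-\alpha$, with the degree $d_{y}$ cancelling. Subtracting and simplifying yields $\sum_{z} f(z)\bigl(\mu_{x}^{\alpha}(z) - \mu_{y}^{\alpha}(z)\bigr) = 1 - (1-\alpha)\frac{\vert N_{xy}\vert+2}{d_{x}}$.

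Combining the two previous steps gives $W_{1}(\mu_{x}^{\alpha},\mu_{y}^{\alpha}) \geq 1 - (1-\alpha)\frac{\vert N_{xy}\vert+2}{d_{x}}$, so that $\kappa_{\alpha}(x,y) = 1 - W_{1}(\mu_{x}^{\alpha},\mu_{y}^{\alpha}) \leq (1-\alpha)\frac{\vert N_{xy}\vert+2}{d_{x}}$ for every $\alpha \in [0,1)$. Dividing by $1-\alpha$ and letting $\alpha\to1$ in the definition of the Lin-Lu-Yau curvature then produces the claimed bound $\kappa(x,y) \leq \frac{\vert N_{xy}\vert+2}{d_{x}}$. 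I do not expect a genuine obstacle here: the only real decision is the choice of test function, and measuring distance from $y$ rather than $x$ is exactly what uses $d_{x} \geq d_{y}$ to select the smaller, hence stronger, of the two possible denominators. The only point needing a little care is confirming that the neighbors of $x$ outside $B_{1}(y)$ sit at distance exactly $2$ from $y$, which holds because each such vertex is adjacent to $x$ and $x \sim y$.
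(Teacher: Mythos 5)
Your proof is correct, but it takes a genuinely different route from the paper. The paper splits into two cases: for $d_{x} = d_{y}$ it invokes the optimal-assignment formula (\thref{Lin_Lu_Yau_curvature}) together with $d(i,j) \geq 1$ for $i \in S_{1}(x)\setminus B_{1}(y)$, $j \in S_{1}(y)\setminus B_{1}(x)$; for $d_{x} > d_{y}$ it fixes $\alpha = \frac{1}{d_{y}+1}$, builds an explicit transport plan via \thref{dontmove}, bounds $\nu_{2}^{\alpha}$ from below by tracking where the surplus mass at $y$ must come from, and transfers the bound to $\kappa$ via the linearity result \thref{relation_curvature}. You instead run a single uniform argument through the easy half of Kantorovich duality with the test function $f = d(y,\cdot)$, which needs nothing beyond the definition of $W_{1}$ and the limit definition of $\kappa$ — no assignment formula, no \thref{dontmove}, no piecewise-linearity of the idleness function. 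Your computation checks out: $\sum_{z} f(z)\bigl(\mu_{x}^{\alpha}(z) - \mu_{y}^{\alpha}(z)\bigr) = 1 - (1-\alpha)\frac{\vert N_{xy}\vert + 2}{d_{x}}$, whence $\kappa_{\alpha}(x,y) \leq (1-\alpha)\frac{\vert N_{xy}\vert+2}{d_{x}}$ for all $\alpha \in [0,1)$, and the bound survives the limit. Your approach is more elementary and in fact slightly stronger: the hypothesis $d_{x} \geq d_{y}$ is never actually used (contrary to your closing remark — choosing $d(y,\cdot)$ versus $d(x,\cdot)$ merely selects which degree appears in the denominator, and by symmetry you get $\kappa(x,y) \leq \frac{\vert N_{xy}\vert+2}{\max\{d_{x},d_{y}\}}$ unconditionally, which is precisely the form the paper later uses in the proof of its main theorem). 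What the paper's heavier route buys is reusability: the explicit transport-plan computation with $\alpha = \frac{1}{d_{y}+1}$ is recycled verbatim in the ``$\impliedby$'' direction of \thref{main_result_1} to establish the matching equality $\kappa(x,y) = 1$, where a one-sided duality bound would not suffice.
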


\begin{remark}
    In \cite[Theorem 4]{JL2013}, the authors establish a similar upper bound for the $0$-Ollivier-Ricci curvature.
\end{remark}

\begin{proof}
    If $d_{x} = d_{y}$, observe that $d(i,j) \geq 1$ for every $i \in S_{1}(x) \setminus B_{1}(y)$ and $j \in S_{1}(y) \setminus B_{1}(x)$. Using \thref{Lin_Lu_Yau_curvature}, we obtain
    \begin{align*}
        \kappa(x,y) &= \frac{1}{d_{x}}\Biggl(d_{x}+1 - \inf_{\phi \in \mathcal{A}_{xy}} \mathlarger{\sum}_{z \in S_{1}(x) \setminus B_{1}(y)}d(z, \phi(z)) \Biggr) \\
        &\leq \frac{1}{d_{x}}\Biggl(d_{x}+1 - \vert S_{1}(x) \setminus B_{1}(y) \vert\Biggr) \\
        &= \frac{\vert N_{xy} \vert + 2}{d_{x}}.
    \end{align*}

    Now, assume $d_{x} > d_{y}$ and set $\alpha = \frac{1}{d_{y}+1}$. According to \thref{dontmove}, there exists an optimal transport plan $\pi$ transporting $\mu_{x}^{\alpha}$ to $\mu_{y}^{\alpha}$, such that $\pi(j,j) = \min\{\mu_{x}^{\alpha}(j), \mu_{y}^{\alpha}(j)\}$ for all $j \in V$. Therefore,  
    \begin{equation*}
        \nu_{0}^{\alpha} = (\vert N_{xy} \vert + 1) \frac{1-\alpha}{d_{x}} + \frac{1-\alpha}{d_{y}}
    \end{equation*}
    where $\nu_{i}^{\alpha}$ denotes the mass transported with distance $i$ under $\pi$. 

    Observe that the set $I = \{z \in B_{1}(x)\setminus\{y\}: \pi(z,y)>0\}$ is non-empty, as $\mu_{y}^{\alpha}(y) - \pi(y,y) > 0$. Furthermore, $I$ does not contain $x$ and common neighbors of $x$ and $y$. Therefore, every vertex in $I$ is at distance two of $y$. Hence,
    \begin{equation*}
        \nu_{2}^{\alpha} \geq \mu_{y}^{\alpha}(y) - \pi(y,y) = \alpha - \frac{1-\alpha}{d_{x}}.
    \end{equation*}
    Using $\alpha = \frac{1-\alpha}{d_{y}}$, we conclude that
    \begin{align*}
        \kappa_{\alpha}(x,y) &= \nu_{0}^{\alpha} -  \nu_{2 }^{\alpha} - 2 \nu_{3}^{\alpha} \\
        &\leq \frac{\vert N_{xy} \vert + 2}{d_{x}}(1-\alpha). 
    \end{align*}
    Finally, we apply \thref{relation_curvature}, and obtain
    \begin{equation*}
        \kappa(x,y) = \frac{1}{1-\alpha}\kappa_{\alpha}(x,y) \leq \frac{\vert N_{xy} \vert + 2}{d_{x}}. 
    \end{equation*}
\end{proof}

The following Bonnet-Myers type theorem on graphs will be of importance.

\begin{theorem}[Discrete Bonnet-Myers Theorem \cite{LLY2011}]\thlabel{Bonnet_Myers}
    Let $G=(V,E)$ be a locally finite graph. If $Ric(G) \geq k > 0$, then the diameter of the graph $G$ is bounded as follows:
    \begin{equation*}
        diam(G) \leq \frac{2}{k}.
    \end{equation*}
\end{theorem}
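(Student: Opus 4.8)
The plan is to fix two vertices $x,y\in V$ joined by a geodesic and to show $d(x,y)\le 2/k$; taking the supremum over all pairs then gives the diameter bound. The core idea is to sandwich the Wasserstein distance $W_1(\mu_x^{\alpha},\mu_y^{\alpha})$ between an upper bound furnished by the curvature lower bound and a lower bound dictated by the geometry of the lazy-walk measures, for a single well-chosen idleness parameter $\alpha$ close to $1$. The two bounds will be incompatible unless $d(x,y)$ is small.

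First I would fix a geodesic $x = x_0 \sim x_1 \sim \cdots \sim x_L = y$ with $L = d(x,y)$. Since $G$ is locally finite and the geodesic is finite, the degrees $d_{x_0},\dots,d_{x_L}$ are all finite, so I can choose a single $\alpha \in (0,1)$ large enough that $\alpha \ge \tfrac{1}{\max(d_{x_i},d_{x_{i+1}})+1}$ for every edge of the geodesic at once. For such an $\alpha$, \thref{relation_curvature} gives $\kappa_{\alpha}(x_i,x_{i+1}) = (1-\alpha)\kappa(x_i,x_{i+1}) \ge (1-\alpha)k$ on each edge, using $Ric(G)\ge k$. Since $W_1$ is a metric, the triangle inequality along the geodesic together with $W_1(\mu_{x_i}^{\alpha},\mu_{x_{i+1}}^{\alpha}) = 1-\kappa_{\alpha}(x_i,x_{i+1})$ yields
\begin{equation*}
W_1(\mu_x^{\alpha},\mu_y^{\alpha}) \le \sum_{i=0}^{L-1} W_1(\mu_{x_i}^{\alpha},\mu_{x_{i+1}}^{\alpha}) = \sum_{i=0}^{L-1}\bigl(1-\kappa_{\alpha}(x_i,x_{i+1})\bigr) \le L\bigl(1-(1-\alpha)k\bigr).
\end{equation*}

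For the matching lower bound I would invoke Kantorovich duality with the $1$-Lipschitz test function $f(z) = d(z,y)$, giving $W_1(\mu_x^{\alpha},\mu_y^{\alpha}) \ge \sum_z d(z,y)\bigl(\mu_x^{\alpha}(z)-\mu_y^{\alpha}(z)\bigr)$. The measure $\mu_x^{\alpha}$ places mass $\alpha$ on $x$ (at distance $L$ from $y$) and mass $1-\alpha$ on $S_1(x)$, every point of which is at distance at least $L-1$ from $y$, whereas $\mu_y^{\alpha}$ places mass $\alpha$ on $y$ (distance $0$) and mass $1-\alpha$ on $S_1(y)$ (distance $1$). A direct computation then gives $\sum_z d(z,y)\mu_x^{\alpha}(z) \ge L-(1-\alpha)$ and $\sum_z d(z,y)\mu_y^{\alpha}(z) = 1-\alpha$, whence
\begin{equation*}
W_1(\mu_x^{\alpha},\mu_y^{\alpha}) \ge L - 2(1-\alpha).
\end{equation*}

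Combining the two estimates gives $L - 2(1-\alpha) \le L\bigl(1-(1-\alpha)k\bigr)$; the leading $L$ terms cancel, and dividing by the positive quantity $(1-\alpha)$ reduces this to $Lk \le 2$, i.e. $d(x,y) \le 2/k$. Since $x,y$ were arbitrary (and the bound is uniform), this forces $diam(G)\le 2/k$. Notably the factor $(1-\alpha)$ cancels, so no limit $\alpha\to1$ is required and any single admissible $\alpha$ suffices. I expect the main obstacle to be twofold: verifying that one idleness parameter $\alpha$ lies in the linear regime of \thref{relation_curvature} for \emph{every} edge of the geodesic simultaneously — this is precisely where local finiteness is used — and identifying the test function $f(z)=d(z,y)$ that makes the lower bound strong enough for the $L$ terms to cancel cleanly against the curvature-driven upper bound.
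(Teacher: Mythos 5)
Your proof is correct, and the comparison here is necessarily with the original argument of Lin--Lu--Yau, since this paper does not prove \thref{Bonnet_Myers} at all but quotes it from \cite{LLY2011}. Your skeleton matches theirs: chain the triangle inequality for $W_{1}$ along a geodesic to get the curvature-driven upper bound $W_{1}(\mu_{x}^{\alpha},\mu_{y}^{\alpha}) \le L\bigl(1-(1-\alpha)k\bigr)$, and play it against the lower bound $W_{1}(\mu_{x}^{\alpha},\mu_{y}^{\alpha}) \ge L-2(1-\alpha)$, which encodes that $\mu_{x}^{\alpha}$ and $\mu_{y}^{\alpha}$ are each supported within distance $1$ of $x$ and $y$ respectively; all your intermediate computations check out, and your test function $f(z)=d(z,y)$ needs only the easy direction of Kantorovich duality, which already follows from the primal formulation (for any plan $\pi$ one has $\sum_{u,v}d(u,v)\pi(u,v)\ge\sum_{u,v}(f(u)-f(v))\pi(u,v)$). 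Where you genuinely depart is the treatment of the idleness parameter. Lin--Lu--Yau only had monotonicity of $h(\alpha)=\kappa_{\alpha}/(1-\alpha)$ increasing to $\kappa$, which yields $\kappa_{\alpha}\le(1-\alpha)\kappa$ --- the wrong direction for the needed upper bound on $W_{1}$ --- so their argument must take $\alpha\to 1$ and absorb error terms in the limit. You instead invoke the exact linearity of \thref{relation_curvature} (due to Bourne et al.) at a single admissible $\alpha$, chosen uniformly over the finitely many geodesic edges via local finiteness, so that $\kappa_{\alpha}(x_{i},x_{i+1})=(1-\alpha)\kappa(x_{i},x_{i+1})\ge(1-\alpha)k$ holds exactly and the factors of $(1-\alpha)$ cancel algebraically with no limiting procedure; this is a clean simplification bought by machinery already present in this paper. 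One pedantic caveat, which afflicts the statement rather than your proof: your argument bounds $d(x,y)$ only for pairs joined by a path, so the conclusion $diam(G)\le \frac{2}{k}$ implicitly presupposes $G$ connected (two disjoint complete graphs satisfy $Ric(G)>0$ yet have infinite diameter under the paper's convention $d(x,y)=\infty$); the same connectedness hypothesis is implicit in \cite{LLY2011}.
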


We are now prepared to prove the main result of this section.

\begin{theoremLLY}
    Let $G=(V,E)$ be a locally finite graph. Then $Ric(G) \geq 1$ if and only if $\delta(G) \geq \vert V \vert -2$.
\end{theoremLLY}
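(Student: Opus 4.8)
The plan is to prove the two implications separately, using the Bonnet--Myers estimate \thref{Bonnet_Myers}, the upper bound \thref{upper_bound}, and the assignment formula \thref{Lin_Lu_Yau_curvature}. The starting observation is that $\delta(G) \geq |V|-2$ is equivalent to saying that every vertex has at most one non-neighbor, i.e.\ the complement $\overline{G}$ has maximum degree at most one; thus the condition holds precisely when $G$ is a complete graph with a (possibly empty) matching removed, and such a graph has diameter at most two.

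For the implication $\delta(G)\geq |V|-2 \Rightarrow Ric(G)\geq 1$ I would fix an edge $x\sim y$ and set $d=\max(d_x,d_y)$. Since each vertex misses only its matching partner, the sets $S_1(x)\setminus B_1(y)$ and $S_1(y)\setminus B_1(x)$ each contain at most one vertex, namely the partner of $y$ and of $x$, and these two partners are adjacent. When $d_x=d_y$ this lets me evaluate \thref{Lin_Lu_Yau_curvature} directly: the only assignment has cost at most $1$, so $\kappa(x,y)=\frac1d(d+1-\mathrm{cost})\geq 1$. When $d_x\neq d_y$ (one endpoint matched, the other not) I would instead take $\alpha=\frac{1}{d+1}$, note that $\frac{1-\alpha}{d}=\alpha$, and exhibit an explicit transport plan between $\mu_x^\alpha$ and $\mu_y^\alpha$ moving the small surplus mass only across edges (all relevant vertices being mutually adjacent), giving $W_1\leq\alpha$ and hence $\kappa_\alpha(x,y)\geq 1-\alpha$; then \thref{relation_curvature} yields $\kappa(x,y)\geq 1$.

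For the converse I would argue by contradiction. By \thref{Bonnet_Myers}, $Ric(G)\geq 1$ forces $diam(G)\leq 2$, so that $G$ is connected with every pair of vertices at distance at most two. Combining $\kappa\geq 1$ with \thref{upper_bound} shows every edge $x\sim y$ satisfies $|N_{xy}|\geq\max(d_x,d_y)-2$, equivalently $|S_1(x)\setminus B_1(y)|\leq 1$; together with $|N_{xy}|\leq\min(d_x,d_y)-1$ this also gives $|d_x-d_y|\leq 1$ on every edge. Now suppose $\delta(G)\leq |V|-3$ and let $v$ have degree $\delta$; then $v$ has a set $A$ of at least two non-neighbors, each at distance exactly two. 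The bound $|S_1(u)\setminus B_1(v)|\leq 1$ says each neighbor $u$ of $v$ is adjacent to at most one vertex of $A$, while diameter two forces every vertex of $A$ to be adjacent to at least one neighbor of $v$.

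The heart of the argument is to show that some $a\in A$ must in fact be adjacent to \emph{every} neighbor of $v$. Fix $a\in A$ and a common neighbor $p\in P_a:=S_1(v)\cap S_1(a)$. A degree count shows $d_p\in\{\delta,\delta+1\}$ with neighbor set $\{v\}\cup N_{vp}\cup\{a\}$, so on the edge $a\sim p$ one gets $|N_{ap}|=|N_{vp}\cap P_a|$. If $d_p=\delta+1$, applying $|N_{ap}|\geq d_p-2$ immediately gives $|P_a|=\delta$. If $d_p=\delta$, the edge $v\sim p$ has equal degrees and its unique private vertices are the non-neighbor $z_p$ of $p$ in $S_1(v)$ and the vertex $a$, so \thref{Lin_Lu_Yau_curvature} forces $z_p\sim a$; feeding $z_p\in P_a$ back into the count on $a\sim p$ again yields $|P_a|=\delta$. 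Thus $a$ is adjacent to all of $S_1(v)$, whence every neighbor of $v$ already covers $a$ and can cover no other vertex of $A$, contradicting that a second non-neighbor $b\in A$ must be covered. This gives $\delta(G)\geq|V|-2$. The main obstacle is exactly this last step: the estimate \thref{upper_bound} alone is satisfied by non-examples such as $C_5$, so the contradiction has to be extracted by playing the exact equal-degree formula on $v\sim p$ against the common-neighbor count on $a\sim p$, and some care is needed with the degenerate case $\delta=1$, which forces $|V|\leq 3$.
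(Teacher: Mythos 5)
Your proposal is correct, and its forward implication is essentially the paper's: the same split into an equal-degree case, evaluated directly with \thref{Lin_Lu_Yau_curvature}, and an unequal-degree case, handled by an explicit transport plan at the critical idleness $\alpha$ with $\frac{1-\alpha}{d}=\alpha$ followed by \thref{relation_curvature}; your plan shipping the surplus $\alpha$ from the partner $y'$ of $y$ in shares of $\alpha/d_y$ to the vertices of $S_1(y)$ is feasible precisely because $y'$ is adjacent to everything except $y$, and it reproduces the paper's computation $\kappa_\alpha(x,y)=1-\alpha$. The converse is where you genuinely diverge. The paper, after invoking \thref{Bonnet_Myers} and extracting $|N_{xy}|\geq\max(d_x,d_y)-2$ from \thref{upper_bound} just as you do, splits on $|N_{xy}|\in\{d_x-1,d_x-2\}$ at a minimum-degree edge and closes by exhibiting a concrete low-curvature edge: a common neighbor $z$ with $|N_{iz}|\leq d_z-3$ (or the private vertex $z_1$ with $|N_{xz_1}|\leq d_{z_1}-3$), so the contradiction is with $Ric(G)\geq1$ itself. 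You instead bootstrap the structural conclusion $P_a=S_1(v)$ --- via $d_p=|N_{vp}|+2$, the identity $N_{ap}=N_{vp}\cap P_a$, and, when $d_p=\delta$, the singleton assignment formula forcing $z_p\sim a$ (the exact counterpart of the paper's step forcing $d(z_1,z_2)=1$) --- and then contradict diameter-two coverage of a second non-neighbor $b$, since each neighbor of $v$ can see at most one vertex of $A$. Both routes consume the same three ingredients; yours trades the paper's two ad hoc bad edges for a single covering argument around the minimum-degree vertex, which is arguably more systematic, at the price of the case analysis $d_p\in\{\delta,\delta+1\}$. Your closing worry about $\delta=1$ is unfounded: any common neighbor $p$ of $v$ and $a$ has $d_p\geq2=\delta+1$, so only your first case occurs and the covering contradiction closes as written (and conversely, in the case $d_p=\delta$ one automatically has $\delta\geq2$ because $p$ has the two neighbors $v$ and $a$, so $z_p$ exists).
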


\begin{proof}
    "$\impliedby$" Let $x \sim y$ be an arbitrary edge in $G$ and assume, without loss of generality, that $d_{x} \geq d_{y}$. As $\delta(G) \geq \vert V \vert -2$, we have $V = S_{1}(x) \cup S_{1}(y)$. Thus,
    \begin{equation*}
        \vert V \vert = d_{x} + d_{y} - \vert N_{xy} \vert.
    \end{equation*}
    Using $d_{y} \geq \vert V \vert -2$, we obtain $\vert N_{xy} \vert \geq d_{x} -2$. If $\vert N_{xy} \vert = d_{x}-1$, then $d_{x} = d_{y}$ must hold and we can apply \thref{Lin_Lu_Yau_curvature}, yielding $\kappa(x,y) = \frac{d+1}{d} > 1$. 
    
    If $\vert N_{xy} \vert = d_{x} - 2$, then $x$ has exactly one neighbor $z$ that is not adjacent to $y$. As $\delta(G) \geq \vert V \vert -2$, $z$ must be adjacent to every vertex in $G$ besides $y$. Now, assume $\alpha = \frac{1}{d_{y} +1}$ and let $\pi$ be an optimal transport plan transporting $\mu_{x}^{\alpha}$ to  $\mu_{y}^{\alpha}$, satisfying the property stated in \thref{dontmove}. As $z$ is adjacent to every vertex in $G$ besides $y$, we obtain 
    \begin{align*}
        \nu_{0}^{\alpha} &= (\vert N_{xy} \vert + 1) \frac{1-\alpha}{d_{x}} + \frac{1-\alpha}{d_{y}}, \\
        \nu_{2}^{\alpha} &= \frac{1-\alpha}{d_{y}} - \frac{1-\alpha}{d_{x}}, \\
        \nu_{3}^{\alpha} &= 0.
    \end{align*}
    Therefore, 
    \begin{equation*}
        \kappa_{\alpha}(x,y) =  \frac{\vert N_{xy} \vert + 2}{d_{x}}(1-\alpha) = (1-\alpha).
    \end{equation*}
    Using \thref{relation_curvature}, we conclude $\kappa(x,y) = 1$.

    "$\implies$" We show this by contradiction. Let $G$ be a graph with $\delta(G) < \vert V \vert -2$ and $Ric(G) \geq 1$. Let $x\in V$ such that $d_{x} = \delta(G)$ and let $y$ be an arbitrary neighbor of $x$. Observe that $\vert N_{xy} \vert \leq d_{x} -1$. If $d_{y}>d_{x}+1$, then 
    \begin{equation*}
        \kappa(x,y) \leq \frac{\vert N_{xy} \vert+2}{d_{y}} \leq \frac{d_{x} +1}{d_{y}} < 1,
    \end{equation*}
    contradicting our assumption. 

    Therefore, we now assume $d_{x} \leq d_{y} \leq d_{x}+1$. As
    \begin{equation*}
        1 \leq \kappa(x,y) \leq \frac{\vert N_{xy} \vert + 2}{d_{y}},
    \end{equation*}
    we conclude $\vert N_{xy} \vert \geq d_{y} - 2 \geq d_{x} - 2$. First, assume $\vert N_{xy} \vert = d_{x}-1$. As $d_{x} < \vert V \vert -2$, there exist two vertices $i,j \in V$ such that $x \not\sim i,j$. According to \thref{Bonnet_Myers}, $d(i,x) = 2$ and $d(j,x) = 2$ must hold true. If both $i$ and $j$ are adjacent to $y$, then $d_{y} \geq d_{x} +2$, which contradicts our assumption. Thus, there exists a $z\in N_{xy}$ such that, without loss of generality, $i \sim z$ and $i\not\sim y$, $i \not\sim x$. Therefore, $\vert N_{iz} \vert \leq d_{z} - 3$, which leads to
    \begin{equation*}
        \kappa(i,z) \leq \frac{\vert N_{iz} \vert + 2}{\max\{d_{i}, d_{z}\}} \leq \frac{d_{z} - 1}{\max\{d_{i}, d_{z}\}} < 1, 
    \end{equation*}
    contradicting $Ric(G) \geq 1$.

    Finally, assume $\vert N_{xy} \vert = d_{x}-2$. Observe that, in this case, it is necessary for the equality $d_{y} = d_{x}$ to hold. 
    Denote the vertex in $S_{1}(x)\setminus B_{1}(y)$ by $z_{1}$ and the vertex in $S_{1}(y)\setminus B_{1}(x)$ by $z_{2}$. According to \thref{Lin_Lu_Yau_curvature}, we have
    \begin{equation*}
        \kappa(x,y) = \frac{1}{d} \Big(d + 1 - d(z_{1}, z_{2})\Big).
    \end{equation*}
    Using $\kappa(x,y) \geq 1$, we conclude $d(z_{1}, z_{2})=1$.
    As $d_{x} < \vert V \vert -2$, there exist two vertices $i,j \in V$ such that $x \not\sim i,j$. It is not possible for $y$ to be adjacent to both $i$ and $j$, as this would contradict $d_{y} = d_{x}$. Therefore, without loss of generality, we can assume $y \not\sim i$. Again, according to \thref{Bonnet_Myers}, we must have $d(x,i)=2$. Thus, it exists a $z\in S_{1}(x)\setminus\{y\}$ such that $z \sim i$. If $z = z_{1}$, then $\vert N_{xz} \vert \leq d_{z} -3$, because $x \not\sim z_{2}$ and $x \not\sim i$. Therefore, we conclude
    \begin{equation*}
        \kappa(x,z) \leq \frac{\vert N_{xz} \vert + 2}{d_{z}} < 1,
    \end{equation*}
    contradicting out assumption. Hence, $z \in N_{xy}$ must hold. As before, we obtain $\vert N_{iz} \vert \leq d_{z} - 3$, because $i \not\sim x$ and $i \not\sim y$, which leads to 
    \begin{equation*}
        \kappa(i,z) \leq \frac{\vert N_{iz} \vert + 2}{\max\{d_{i}, d_{z}\}} \leq \frac{d_{z} - 1}{\max\{d_{i}, d_{z}\}} < 1. 
    \end{equation*}
    This contradiction concludes the proof.
\end{proof}

\begin{corollary}\thlabel{regular_lly_one}
    Let $G=(V,E)$ be a regular graph with $Ric(G) \geq 1$. Then $G$ is isomorphic to one of the following graphs:
    \begin{itemize}
        \item[$(i)$] A complete graph, satisfying $Ric(G) = \frac{\vert V \vert}{\vert V \vert -1}$,
        \item[$(i)$] a cocktail party graph, satisfying $Ric(G) = 1$.
    \end{itemize}
\end{corollary}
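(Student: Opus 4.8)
The plan is to use \thref{main_result_1} to pin down the degree, split into the two resulting cases, and in each case identify the graph and read off its curvature from \thref{Lin_Lu_Yau_curvature}. Since $G$ is regular of some degree $d$ and $Ric(G)\geq 1$, \thref{main_result_1} gives $d=\delta(G)\geq \vert V\vert-2$. As trivially $d\leq \vert V\vert-1$, the only possibilities are $d=\vert V\vert-1$ and $d=\vert V\vert-2$, and I would treat these separately.

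In the case $d=\vert V\vert-1$, every vertex is adjacent to every other vertex, so $G\cong K_{\vert V\vert}$. For any edge $x\sim y$ one has $B_{1}(y)=V$, hence $S_{1}(x)\setminus B_{1}(y)=\emptyset$ and the infimum in \thref{Lin_Lu_Yau_curvature} is an empty sum equal to $0$. This immediately yields $\kappa(x,y)=\frac{d+1}{d}=\frac{\vert V\vert}{\vert V\vert-1}$ for every edge, establishing the first item.

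In the case $d=\vert V\vert-2$, each vertex has exactly one non-neighbor, so the complement of $G$ is $1$-regular, i.e.\ a perfect matching; thus $G$ is the complete graph minus a perfect matching, which is precisely the cocktail party graph (and forces $\vert V\vert$ even). For an edge $x\sim y$, let $x'$ and $y'$ denote the unique non-neighbors of $x$ and $y$. I would first check that $x,y,x',y'$ are pairwise distinct (using $x\sim y$ together with the fact that each vertex has a single non-neighbor), so that $B_{1}(y)=V\setminus\{y'\}$ and $y'\in S_{1}(x)$. This gives $S_{1}(x)\setminus B_{1}(y)=\{y'\}$ and, symmetrically, $S_{1}(y)\setminus B_{1}(x)=\{x'\}$; since $x'\sim y'$, the unique assignment has cost $d(y',x')=1$. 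Applying \thref{Lin_Lu_Yau_curvature} then gives $\kappa(x,y)=\frac{1}{d}(d+1-1)=1$, so $Ric(G)=1$.

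The substantive step is the structural reduction supplied by \thref{main_result_1}, which collapses the problem to exactly two degree values; everything after that is a direct computation. The only point requiring a little care is the cocktail party case, namely verifying that $S_{1}(x)\setminus B_{1}(y)$ reduces to the single vertex $y'$ and that $d(x',y')=1$, but this is forced by each vertex having a unique non-neighbor and does not present a genuine obstacle.
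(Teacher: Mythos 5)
Your proof is correct and takes essentially the route the paper intends: the paper states this corollary without a separate proof as an immediate consequence of \thref{main_result_1}, whose proof already contains the same two degree cases $d=\vert V\vert-1$ and $d=\vert V\vert-2$ with the curvature values $\frac{d+1}{d}$ and $1$ computed via \thref{Lin_Lu_Yau_curvature}. Your verification that $x,y,x',y'$ are pairwise distinct in the cocktail party case is a careful (and correct) spelling-out of a detail the paper leaves implicit.
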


\begin{remark}
    A cocktail party graph $G=(V,E)$ is a regular graph of degree $d$, where $d= \vert V \vert -2$.
\end{remark}

\begin{corollary}
    Let $G=(V,E)$ be a locally finite graph with $Ric(G) = 1$. If $\vert V \vert$ is even, then $G$ is a cocktail party graph. If $\vert V \vert$ is odd, then $G$ is the graph with degree sequence $(\vert V \vert - 1, \vert V \vert -2, \dots, \vert V \vert -2)$.
\end{corollary}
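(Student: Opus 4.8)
The plan is to reduce everything to \thref{main_result_1} together with the structural constraints it forces, and then isolate the single place where the hypothesis $Ric(G)=1$ (rather than merely $Ric(G)\ge 1$) does the real work. Writing $n=\vert V\vert$, the starting point is that $Ric(G)=1$ implies $Ric(G)\ge 1$, so by \thref{main_result_1} we have $\delta(G)\ge n-2$; hence every vertex has degree $n-1$ or $n-2$. I would partition $V=A\cup B$, where $A$ is the set of universal vertices (degree $n-1$) and $B$ the set of vertices of degree $n-2$. Each $x\in B$ has a unique non-neighbour $f(x)$, and $x\not\sim f(x)$ forces $\deg f(x)\le n-2$, so $f(x)\in B$ and $f(f(x))=x$. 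Thus $f$ is a fixed-point-free involution on $B$, i.e. the non-adjacencies inside $B$ form a perfect matching; in particular $\vert B\vert$ is even, so $\vert A\vert\equiv n \pmod 2$.

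The key step, and the only one that genuinely uses equality rather than inequality, is to bound $\vert A\vert$. If $x,y\in A$ then $x\sim y$, both have degree $d=n-1$, and $N_{xy}=V\setminus\{x,y\}$ has size $n-2=d-1$. Since $y$ is universal, $B_1(y)=V$, so $S_1(x)\setminus B_1(y)=\emptyset$ and \thref{Lin_Lu_Yau_curvature} gives $\kappa(x,y)=\tfrac{n}{n-1}>1$, contradicting $Ric(G)=1$. Hence $\vert A\vert\le 1$. (Alternatively, in the $\vert A\vert=0$ case one can simply quote \thref{regular_lly_one}, since $G$ is then $(n-2)$-regular, and a complete graph is excluded because its curvature is $\tfrac{n}{n-1}\neq 1$.)

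I would then conclude by combining $\vert A\vert\le 1$ with the parity relation $\vert A\vert\equiv n\pmod 2$. If $n$ is even, then $\vert A\vert=0$, so $G$ is $(n-2)$-regular and equals $K_n$ with the perfect matching $f$ deleted, which is exactly the cocktail party graph. If $n$ is odd, then $\vert A\vert=1$, giving the degree sequence $(n-1,n-2,\dots,n-2)$; here $G$ is $K_n$ with a perfect matching removed from the $n-1$ non-universal vertices, and this is determined up to isomorphism by $n$ alone.

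The main obstacle is the final bookkeeping rather than any deep argument: I must justify that the description is well-defined, i.e. that knowing the matching structure of $B$ together with the count $\vert A\vert\in\{0,1\}$ pins $G$ down up to isomorphism, so that the phrase ``the graph with degree sequence $(n-1,n-2,\dots,n-2)$'' refers to a unique graph. If one also wants the characterization to be non-vacuous, I would verify that the odd-case graph really has $\kappa\equiv 1$ on every edge: the edge from the universal vertex $a$ to $y\in B$ satisfies $\vert N_{ay}\vert=n-3=d_a-2$ and is handled exactly by the $\vert N_{xy}\vert=d_x-2$ computation in the proof of \thref{main_result_1}, while for an edge between $y,w\in B$ that are not matched, \thref{Lin_Lu_Yau_curvature} gives $\kappa(y,w)=\tfrac{1}{d}\bigl(d+1-d(\bar w,\bar y)\bigr)=1$ because the two private neighbours $\bar w$ and $\bar y$ are adjacent.
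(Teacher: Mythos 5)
Your proof is correct, and its core coincides with the paper's: both reduce to \thref{main_result_1} to get $\delta(G)\ge \vert V\vert-2$, and both kill the possibility of two adjacent vertices of degree $\vert V\vert-1$ via $\kappa(x,y)=\frac{d+1}{d}>1$ (your route through the empty assignment set in \thref{Lin_Lu_Yau_curvature} is exactly the paper's $\vert N_{xy}\vert=d-1$ case). Where you genuinely differ is the organization and the verification. The paper cases on regular versus non-regular, quoting \thref{regular_lly_one} for the regular case, and only remarks at the very end that the degree sequence $(\vert V\vert-1,\vert V\vert-2,\dots,\vert V\vert-2)$ forces odd order because degree-$(\vert V\vert-2)$ vertices pair up by non-adjacency; you instead build the fixed-point-free involution $f$ on $B$ first, obtain $\vert A\vert\equiv n\pmod 2$ together with $\vert A\vert\le 1$, and let the parity of $n$ decide the dichotomy. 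This is arguably cleaner, and it makes explicit the uniqueness-up-to-isomorphism content of the statement ($G$ is $K_{n}$ minus a perfect matching, possibly joined with a single universal vertex), which the paper leaves implicit. Your verification of the odd case also differs in technique: for edges between unmatched $y,w\in B$ you compute $\kappa(y,w)=1$ directly from the singleton assignment, using that $f(y)\sim f(w)$ --- which is indeed forced, since $f(y)\not\sim f(w)$ would mean $w=f(y)$, contradicting $y\sim w$ --- and for edges at the universal vertex you correctly observe that the $\vert N_{xy}\vert=d_{x}-2$ transport computation inside the proof of \thref{main_result_1} applies verbatim (it only assumes $\delta(G)\ge\vert V\vert-2$). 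The paper instead sandwiches: it first rules out $\vert N_{xy}\vert=d_{x}-1$ for $B$--$B$ edges by a degree count, then combines the upper bound $\frac{\vert N_{xy}\vert+2}{d_{x}}$ from \thref{upper_bound} with the lower bound $\kappa\ge 1$ from \thref{main_result_1}. Both routes are valid; yours trades the upper-bound theorem for explicit distance computations, at the modest cost of the extra bookkeeping you acknowledge.
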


\begin{proof}
    Assume $G$ is a locally finite graph, satisfying $Ric(G) = 1$. If $G$ is regular, then $G$ is a cocktail party graph, according to \thref{regular_lly_one}. Note that in this case, $\vert V \vert$ must be even.

    Next, assume $G$ is not regular. According to \thref{main_result_1}, we have $\delta(G) \geq \vert V \vert -2$. If there exist two vertices $x,y$ with degree $d > \vert V \vert -2$, then $\vert N_{xy} \vert = d - 1$, leading to $\kappa(x,y) = \frac{d+1}{d} > 1$, contradicting $Ric(G) = 1$. Therefore, $G$ can only have degree sequence $(\vert V \vert - 1, \vert V \vert -2, \dots, \vert V \vert -2)$. It remains to show that the graph with this degree sequence indeed satisfies $Ric(G) = 1$. To this end, let $x \sim y$ be an arbitrary edge in $G$. If $d_{x} = \vert V \vert - 1 > d_{y} = \vert V \vert - 2$, then $\vert N_{xy} \vert = d_{y} - 1$. This leads to 
    \begin{equation*}
        \kappa(x,y) \leq \frac{\vert N_{xy} \vert + 2}{d_{x}} = 1.
    \end{equation*}
    According to \thref{main_result_1}, we also have $\kappa(x,y) \geq 1$, and therefore $\kappa(x,y) = 1$. 

    Finally, assume $d_{x} = d_{y} = \vert V \vert - 2$. If $\vert N_{xy} \vert = d_{x} -1$, then 
    \begin{equation*}
        \vert S_{1}(x) \cup S_{1}(y) \vert = d_{x} + 1 = \vert V \vert - 1.
    \end{equation*}
    Hence, there exists a vertex that is not adjacent to both $x$ and $y$, resulting in a degree less than $\vert V \vert -2$, which leads to a contradiction. Therefore, $\vert N_{xy} \vert = d_{x} - 2$ must hold, implying that $\kappa(x,y) \leq 1$. According to \thref{main_result_1}, we also have $\kappa(x,y) \geq 1$, and therefore we conclude $\kappa(x,y) =1$. Finally, note that the graph with this degree sequence must contain an odd number of vertices. This can be seen by observing that each vertex of degree $\vert V \vert -2$ is non-adjacent to exactly one other vertex of degree $\vert V \vert -2$.
\end{proof}

We also provide a complete characterization of all graphs with Lin-Lu-Yau curvature greater than one, a result previously established in \cite{Bonini2019}.

\begin{corollary}
    Let $G=(V,E)$ be a locally finite graph with $Ric(G) > 1$. Then $G$ is a complete.
\end{corollary}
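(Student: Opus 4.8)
The plan is to combine the characterization in \thref{main_result_1} with the upper bound of \thref{upper_bound}. Since $Ric(G) > 1$ implies $Ric(G) \geq 1$, \thref{main_result_1} gives $\delta(G) \geq \vert V \vert - 2$, so every vertex has degree $\vert V \vert - 1$ or $\vert V \vert - 2$. Arguing by contradiction, I would assume $G$ is not complete; then some vertex $x$ has degree exactly $\vert V \vert - 2$, and I denote by $w$ its unique non-neighbor.

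The key structural step is to pin down $w$. Since $\delta(G) \geq \vert V \vert - 2$, the vertex $w$ also has at most one non-neighbor; as $x \not\sim w$, that non-neighbor must be $x$ itself. Consequently every vertex other than $x$ is adjacent to $w$. In particular, for $\vert V \vert \geq 3$ the vertex $x$ has a neighbor $y$, and since $y \neq w$ we get $y \sim w$.

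With this in hand the count of common neighbors is forced. Because $x$ is adjacent to everything except $w$, we have $S_{1}(x) = V \setminus \{x, w\}$, so $N_{xy} = S_{1}(y) \setminus \{x, w\}$; as both $x \in S_{1}(y)$ and $w \in S_{1}(y)$, this yields $\vert N_{xy} \vert = d_{y} - 2$. Feeding this into \thref{upper_bound}, whose denominator is the larger of the two degrees, gives
\[
    \kappa(x,y) \leq \frac{\vert N_{xy} \vert + 2}{\max\{d_{x}, d_{y}\}} = \frac{d_{y}}{\max\{d_{x}, d_{y}\}} \leq 1,
\]
which contradicts $Ric(G) > 1$. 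Hence no vertex of degree $\vert V \vert - 2$ can exist, and $G$ is complete.

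The main obstacle I anticipate is the configuration in which a neighbor $y$ of $x$ also fails to be adjacent to $w$: a crude application of \thref{upper_bound} would then only give a bound strictly larger than $1$, so the argument would stall. The resolution, and the crux of the proof, is the observation that the degree constraint on $w$ forbids this, since $w$ can miss only one vertex and that vertex is already $x$, forcing every neighbor of $x$ to be joined to $w$. This is precisely what makes $\vert N_{xy} \vert = d_{y} - 2$ rather than $d_{y} - 1$, collapsing the upper bound to $\kappa(x,y) \leq 1$. The degenerate cases $\vert V \vert \leq 2$ are handled directly, as there the only graph carrying an edge is already complete.
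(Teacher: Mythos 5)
Your proof is correct and takes essentially the same route as the paper: both combine \thref{main_result_1} (giving $\delta(G) \geq \vert V \vert - 2$) with the upper bound of \thref{upper_bound} to produce an edge with $\kappa(x,y) \leq 1$, contradicting $Ric(G) > 1$. The only difference is organizational --- the paper first disposes of the all-degree-$(\vert V \vert - 2)$ case by citing that cocktail party graphs satisfy $Ric(G)=1$ and then bounds $\kappa(x,y)$ on an edge joining a degree-$(\vert V \vert -1)$ vertex to a degree-$(\vert V \vert -2)$ vertex, whereas you anchor at a degree-$(\vert V \vert -2)$ vertex, use its unique non-neighbor $w$ to pin down $\vert N_{xy} \vert = d_{y}-2$, and handle both cases uniformly, which is marginally more self-contained.
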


\begin{proof}
    According to \thref{main_result_1}, $\delta(G) \geq \vert V \vert -2$ must hold true. If every vertex is of degree $\vert V \vert -2$, then $G$ is a cocktail party graph and therefore $Ric(G) =1$. Thus, there exists at least one vertex $x$ of degree $\vert V \vert -1$. Assume there exists a vertex $y$ of degree $\vert V \vert -2$. Then
    \begin{equation*}
        \kappa(x,y) \leq \frac{\vert N_{xy} \vert +2}{d_{x}} \leq 1,
    \end{equation*}
    contradicting $Ric(G) > 1$. Therefore, $G$ must be the complete graph.
    
\end{proof}

\section{Relation between the two curvature notions}\label{relation_curvatures}

In this section, we derive an exact expression for the difference between the Lin-Lu-Yau curvature and the $0$-Ollivier-Ricci curvature on regular graphs. We begin with the following Lemma, which addresses certain assumptions that can be imposed on an optimal assignment.

\begin{lemma}\thlabel{assumptions_optimal_matching}
    Let $G=(V,E)$ be a locally finite graph. Let $x,y \in V$ be of equal degree d with $x \sim y$.
    Then there exists an optimal assignment $\phi$ between $S_{1}(x)$ and $S_{1}(y)$, such that $\phi(i) = i$ for all $i \in N_{xy}$.

    Furthermore, if $\vert N_{xy} \vert < d-1$, there exists an optimal assignment $\phi$, satisfying the aforementioned property and $\phi(y) \neq x$.
\end{lemma}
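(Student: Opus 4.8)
The plan is to prove both assertions by starting from an arbitrary optimal assignment $\psi \colon S_{1}(x) \to S_{1}(y)$ and repeatedly applying cost-non-increasing swaps justified by the triangle inequality, in the same spirit as \thref{dontmove} (mass that is shared need not be moved). Throughout I use local finiteness only to guarantee that $S_{1}(x)$ is finite, so that any such iteration terminates.

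For the first assertion, suppose $\psi$ is optimal but some common neighbor $i \in N_{xy}$ satisfies $\psi(i) = j \neq i$. Since $i \in N_{xy} \subseteq S_{1}(y)$ lies in the codomain, there is a unique $k = \psi^{-1}(i) \in S_{1}(x)$ with $\psi(k) = i$, and $k \neq i$ because $\psi(i) \neq i$. I would define a new assignment $\phi$ that agrees with $\psi$ except that $\phi(i) = i$ and $\phi(k) = j$; this is again a bijection $S_{1}(x) \to S_{1}(y)$, since it merely reassigns the image set $\{i,j\}$ among the inputs $\{i,k\}$. The change in total cost equals $d(k,j) - d(i,j) - d(k,i)$, which is $\leq 0$ by $d(k,j) \leq d(k,i) + d(i,j)$, so $\phi$ remains optimal. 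A short check shows that the swap fixes $i$ while leaving every previously fixed common neighbor fixed, hence it strictly increases the number of common neighbors mapped to themselves. Iterating this finitely many times yields an optimal assignment with $\phi(i) = i$ for all $i \in N_{xy}$.

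For the second assertion, I would begin with an optimal assignment $\phi$ produced above, so that all common neighbors are fixed. If $\phi(y) \neq x$ there is nothing to do, so assume $\phi(y) = x$. Since all common neighbors are fixed and $\phi(y) = x$, the map $\phi$ restricts to a bijection from $S_{1}(x) \setminus B_{1}(y)$ onto $S_{1}(y) \setminus B_{1}(x)$. The hypothesis $\vert N_{xy} \vert < d-1$ guarantees that $\vert S_{1}(x) \setminus B_{1}(y) \vert = d - 1 - \vert N_{xy} \vert \geq 1$, so I can pick $a \in S_{1}(x) \setminus B_{1}(y)$ and set $b = \phi(a) \in S_{1}(y) \setminus B_{1}(x)$. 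Defining $\phi'$ by $\phi'(y) = b$, $\phi'(a) = x$, and $\phi' = \phi$ elsewhere changes the cost by $\bigl(d(y,b) + d(a,x)\bigr) - \bigl(d(y,x) + d(a,b)\bigr)$. Here $d(y,x) = d(a,x) = 1$ and $d(y,b) = 1$ because $b \in S_{1}(y)$, while $d(a,b) \geq 1$ since $a \in S_{1}(x) \setminus B_{1}(y)$ and $b \in S_{1}(y)$ force $a \neq b$. Hence the cost does not increase, so $\phi'$ is optimal, still fixes every common neighbor, and satisfies $\phi'(y) = b \neq x$.

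The individual swaps are routine; the points that require care are the bookkeeping in the first part, namely verifying that each swap genuinely increases the count of fixed common neighbors and never unfixes one, and, in the second part, pinpointing where the hypothesis $\vert N_{xy} \vert < d-1$ enters. It is used precisely to produce a spare vertex $a \in S_{1}(x) \setminus B_{1}(y)$ that can absorb the target $x$, allowing $y$ to be rerouted to a vertex $b$ at distance one; I expect this existence step to be the main obstacle, since without it the swap has no vertex to act on.
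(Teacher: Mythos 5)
Your proof is correct and follows essentially the same route as the paper: the first part is the standard triangle-inequality swap argument that the paper dismisses as immediate (you merely spell out the bookkeeping), and your second-part swap sending $y \mapsto \phi(a)$ and $a \mapsto x$ for some $a \in S_{1}(x)\setminus B_{1}(y)$ is exactly the paper's construction, including the use of $\vert N_{xy}\vert < d-1$ to guarantee such an $a$ exists.
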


\begin{proof}
    The first part of the Lemma is an immediate consequence of the triangle inequality.
    
    For the second part of the proof, assume that $\vert N_{xy} \vert < d-1$ and assume that $\phi(y) = x$. Choose an arbitrary $i \in S_{1}(x) \setminus B_{1}(y)$ and define a new assignment $\phi'$ between $S_{1}(x)$ and $S_{1}(y)$ by
    \begin{equation*}
        \phi'(z) = \begin{cases}
            \phi(i), & \text{if $z = y$;}\\
            x, & \text{if $z = i$;}\\
            \phi(z), & \text{otherwise.}
        \end{cases}
    \end{equation*}
    Since $i \not\in N_{xy}$, we have $d(i, \phi(i)) \geq 1$, leading to
    \begin{equation*}
        d(y, \phi'(y)) + d(i, \phi'(i)) = 2 \leq d(y, \phi(y)) + d(i, \phi(i)). 
    \end{equation*}
    Hence, the new assignment $\phi'$ is still optimal. Finally, note that $\phi'(i) = \phi(i) = i$ for all $i \in N_{xy}$. 
    This concludes the proof.
\end{proof}

We are now ready to prove the main theorem of this section.

\begin{theoremComparison}
    Let $G=(V,E)$ be a locally finite graph. Let $x,y \in V$ be of equal degree $d$ with $x\sim y$. If $\vert N_{xy}\vert < d-1$, then 
    \begin{equation*}
        \kappa(x,y) - \kappa_{0}(x,y) = \frac{1}{d}\Biggl(3 - \sup_{\phi \in \mathcal{O}_{xy}} \sup_{z \in S_{1}(x)\setminus B_{1}(y)} d(z, \phi(z))\Biggr).
    \end{equation*}
    If $\vert N_{xy} \vert=d-1$, then 
    \begin{equation*}
        \kappa(x,y) - \kappa_{0}(x,y) = \frac{2}{d}.
    \end{equation*}
\end{theoremComparison}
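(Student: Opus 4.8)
The plan is to combine the two assignment formulas \thref{Lin_Lu_Yau_curvature} and \thref{kappa_null} and reduce everything to a purely combinatorial comparison of two optimal assignment problems that differ by exactly one vertex on each side. Write $T_{\kappa} = \inf_{\phi \in \mathcal{A}_{xy}} \sum_{z \in S_{1}(x)\setminus B_{1}(y)} d(z,\phi(z))$ for the transport cost in \thref{Lin_Lu_Yau_curvature} and $T_{0} = \inf_{\psi} \sum_{z \in S_{1}(x)\setminus S_{1}(y)} d(z,\psi(z))$ for the one in \thref{kappa_null}. The key observation is the set identity $S_{1}(x)\setminus S_{1}(y) = (S_{1}(x)\setminus B_{1}(y)) \cup \{y\}$, and symmetrically on the $y$-side, so the $\kappa_{0}$-problem is the $\kappa$-problem with the single extra source $y$ and extra target $x$ adjoined. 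Subtracting the two formulas yields
\[
    \kappa(x,y) - \kappa_{0}(x,y) = \frac{1}{d}\bigl(1 + T_{0} - T_{\kappa}\bigr),
\]
so the whole theorem reduces to determining $T_{0} - T_{\kappa}$. The case $|N_{xy}| = d-1$ is immediate: here $S_{1}(x)\setminus B_{1}(y) = \emptyset$, so $T_{\kappa} = 0$, the only admissible $\psi$ sends $y \mapsto x$, so $T_{0} = d(x,y) = 1$, and substituting gives $\tfrac{2}{d}$.

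For the main case $|N_{xy}| < d-1$, the first step is to invoke \thref{assumptions_optimal_matching} to secure an optimal assignment $\psi$ for $T_{0}$ with $\psi(y) \neq x$. Such a $\psi$ necessarily sends $y$ to some $w_{0} \in S_{1}(y)\setminus B_{1}(x)$ and sends some $z_{0} \in S_{1}(x)\setminus B_{1}(y)$ to $x$, each at distance one. Since the remaining vertices are matched optimally, this decomposition shows
\[
    T_{0} = 2 + \min_{z_{0},\,w_{0}} f(z_{0},w_{0}),
\]
where $f(z_{0},w_{0})$ is the minimal cost of a bijection $S_{1}(x)\setminus(B_{1}(y)\cup\{z_{0}\}) \to S_{1}(y)\setminus(B_{1}(x)\cup\{w_{0}\})$. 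It then suffices to prove the identity $\min_{z_{0},w_{0}} f(z_{0},w_{0}) = T_{\kappa} - M$, where $M = \sup_{\phi\in\mathcal{O}_{xy}}\sup_{z} d(z,\phi(z))$, because substituting $T_{0} = T_{\kappa} + 2 - M$ into the displayed difference gives exactly $\tfrac{1}{d}(3-M)$.

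The inequality $\min f \le T_{\kappa} - M$ is the easy direction: choosing an optimal assignment $\phi^{*}$ whose bottleneck edge $(z^{*},w^{*})$ has length $M$ and deleting that edge leaves a bijection of cost $T_{\kappa} - M$, so $f(z^{*},w^{*}) \le T_{\kappa} - M$. The reverse inequality $f(z_{0},w_{0}) \ge T_{\kappa} - M$ for \emph{every} pair is the main obstacle. I would rely on two facts. First, extending any bijection on the reduced sets by the edge $z_{0}\mapsto w_{0}$ produces an element of $\mathcal{A}_{xy}$, so $f(z_{0},w_{0}) \ge T_{\kappa} - d(z_{0},w_{0})$. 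Second, every $\phi \in \mathcal{O}_{xy}$ has all its edges of length at most $M$, so if $d(z_{0},w_{0}) > M$ then any assignment matching $z_{0}\mapsto w_{0}$ is non-optimal and hence, by integrality of graph distances, has cost at least $T_{\kappa}+1$; this forces $f(z_{0},w_{0}) \ge T_{\kappa} + 1 - d(z_{0},w_{0})$.

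These two facts are combined according to the value of $M$. If $d(z_{0},w_{0}) \le M$, the first fact already gives $f \ge T_{\kappa} - M$. If $d(z_{0},w_{0}) > M$ and $M \ge 2$, then $d(z_{0},w_{0}) \le 3 \le M+1$, and the second fact gives $f \ge T_{\kappa} + 1 - (M+1) = T_{\kappa} - M$. The only leftover configuration is $M = 1$ together with a forced distance-three edge, where the naive bound $f \ge T_{\kappa} - 3$ is too weak; here $M=1$ means all optimal edges have length one, so $T_{\kappa} = |S_{1}(x)\setminus B_{1}(y)|$, while $f(z_{0},w_{0})$ is a sum of $|S_{1}(x)\setminus B_{1}(y)|-1$ distances each at least one, giving $f \ge T_{\kappa} - 1 = T_{\kappa} - M$. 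The delicate point throughout is precisely this: a forced edge longer than the bottleneck $M$ is strictly suboptimal, and quantifying that penalty against the a priori bound $d(z_{0},w_{0}) \le 3$ using integrality (with the separate counting argument reserved for $M=1$) is exactly what closes the gap between the two curvatures.
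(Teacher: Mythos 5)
Your proposal is correct and follows essentially the same route as the paper: both reduce the claim via \thref{Lin_Lu_Yau_curvature} and \thref{kappa_null} to showing $T_{0} = T_{\kappa} + 2 - M$, both invoke \thref{assumptions_optimal_matching} to normalize $\psi(y) \neq x$, both obtain the upper bound by deleting the bottleneck edge of a bottleneck-maximal optimal assignment, and both close the lower bound with the same integrality penalty for non-optimal assignments together with the identical counting argument for the delicate case $M=1$ with a forced distance-three edge. The only difference is presentational: you prove the identity directly by two inequalities on $\min f$, whereas the paper constructs the extended assignment $\psi$ explicitly and verifies its optimality by contradiction.
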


\begin{proof}
    \emph{Case 1: $\vert N_{xy} \vert = d-1$.} According to \thref{Lin_Lu_Yau_curvature}, we have $\kappa(x,y) = \frac{d+1}{d}$, while \thref{kappa_null} shows that $\kappa_{0}(x,y) = \frac{d-1}{d}$. Hence,
    \begin{equation*}
        \kappa_{0}(x,y) = \kappa(x,y) - \frac{2}{d}.
    \end{equation*}

    \emph{Case 2: $\vert N_{xy} \vert < d-1$.} Choose $\phi \in \mathcal{O}_{xy}$ and $j \in S_{1}(x) \setminus B_{1}(y)$ such that
    \begin{equation*}
        d(j, \phi(j)) = \sup_{\psi \in \mathcal{O}_{xy}} \sup_{z \in S_{1}(x) \setminus B_{1}(y)} d(z, \psi(z)).
    \end{equation*}
    Next, we define an assignment $\psi$ between $S_{1}(x)\setminus S_{1}(y)$ and $S_{1}(y)\setminus S_{1}(x)$ by
    \begin{equation*}
        \psi(z) = \begin{cases}
            x, & \text{if $z = j$;}\\
            \phi(j), & \text{if $z = y$;} \\
            \phi(z), & \text{otherwise.}
        \end{cases}
    \end{equation*}
    We claim that $\psi$ is an optimal assignment between $S_{1}(x)\setminus S_{1}(y)$ and $S_{1}(y)\setminus S_{1}(x)$. Assume this is not the case. Let $\psi'$ be an optimal assignment. According to \thref{assumptions_optimal_matching}, we can assume that $\psi'(y) \neq x$. As $\psi$ is not optimal,
    \begin{equation}\label{eq:2}
        \sum_{z \in S_{1}(x)\setminus S_{1}(y)} d(z, \psi'(z)) < \sum_{z \in S_{1}(x)\setminus S_{1}(y)} d(z, \psi(z))
    \end{equation}
    must hold. Using $d(j,\psi(j)) = d(y,\psi(y)) = 1$, we obtain
    \begin{equation}\label{eq:3}
        \sum_{z \in S_{1}(x)\setminus S_{1}(y)} d(z, \psi(z)) = \sum_{\substack{z \in S_{1}(x)\setminus B_{1}(y), \\ z \not= j}}d(z,\phi(z)) + 2.
    \end{equation}
    On the other hand, by assumption, we have $\psi'(y) \neq x$. Denote by $k$ the preimage of $x$ under $\psi'$. Then 
    \begin{equation}\label{eq:4}
        \sum_{z \in S_{1}(x)\setminus S_{1}(y)} d(z, \psi'(z)) = \sum_{\substack{z \in S_{1}(x) \setminus B_{1}(y), \\z \not= k }} d(z,\psi'(z)) + 2.
    \end{equation}
    Combining equations \ref{eq:2}, \ref{eq:3} and \ref{eq:4} leads to
    \begin{equation}\label{eq:5}
        \sum_{\substack{z \in S_{1}(x) \setminus B_{1}(y), \\z \not= k }} d(z,\psi'(z)) < \sum_{\substack{z \in S_{1}(x)\setminus B_{1}(y), \\ z \not= j}}d(z,\phi(z)).
    \end{equation}
    Next, we define an assignment $\phi' \in \mathcal{A}_{xy}$ between $S_{1}(x)\setminus B_{1}(y)$ and $S_{1}(y)\setminus B_{1}(x)$ by
    \begin{equation*}
        \phi'(z) = \begin{cases}
            \psi'(y), & \text{if $z = k$;}\\
            \psi'(z), & \text{if $z \in S_{1}(x)\setminus B_{1}(y)$ and $z\not=k$ .}
        \end{cases}
    \end{equation*}
    Due to the optimality of $\phi$, we have
    \begin{equation*}
    \sum_{z \in S_{1}(x) \setminus B_{1}(y)} d(z, \phi'(z)) \geq \sum_{z \in S_{1}(x) \setminus B_{1}(y)} d(z, \phi(z)). 
    \end{equation*}

    We now distinguish the following two cases:

    \emph{Case 1:} $\sum_{z \in S_{1}(x) \setminus B_{1}(y)} d(z, \phi'(z)) = \sum_{z \in S_{1}(x) \setminus B_{1}(y)} d(z, \phi(z))$. In this case, $\phi' \in \mathcal{O}_{xy}$ and by our choice of $\phi$ and $j$, we have
    \begin{equation*}
        d(k, \phi'(k)) \leq d(j, \phi(j)),
    \end{equation*}
    leading to
    \begin{equation*}
        \sum_{\substack{z \in S_{1}(x) \setminus B_{1}(y), \\z \not= k }} d(z,\phi'(z)) \geq \sum_{\substack{z \in S_{1}(x)\setminus B_{1}(y), \\ z \not= j}}d(z,\phi(z)).
    \end{equation*}
    contradicting equation \ref{eq:5}.

    \emph{Case 2:} $\sum_{z \in S_{1}(x) \setminus B_{1}(y)} d(z, \phi'(z)) > \sum_{z \in S_{1}(x) \setminus B_{1}(y)} d(z, \phi(z))$.
    Due to equation \ref{eq:5}, we have 
    \begin{equation*}
        2 + d(j,\phi(j)) \leq d(k, \phi'(k))).
    \end{equation*} 
    As $d(k, \phi'(k)) \leq 3$ and $1 \leq d(j, \phi(j))$, we must have $d(j, \phi(j))= 1$ and therefore 
    \begin{equation*}
        1 = d(j,\phi(j)) \geq d(z,\phi(z)) \geq 1
    \end{equation*} 
    holds for any $z \in S_{1}(x)\setminus B_{1}(y)$, by the choice of $j$ and $\phi$. This contradicts equation \ref{eq:5}, as 
    \begin{equation*}
        d(z, \psi'(z)) \geq 1, \; \forall z \in S_{1}(x)\setminus B_{1}(y).
    \end{equation*}
    Therefore our assumption was wrong and $\psi$ is an optimal assignment between $S_{1}(x)\setminus S_{1}(y)$ and $S_{1}(y)\setminus S_{1}(x)$. Using \thref{kappa_null}, we obtain
    \begin{align*}
        \kappa_{0}(x,y) &= \frac{1}{d}\Biggl(d-\sum_{z \in S_{1}(x)\setminus S_{1}(y)} d(z, \psi(z))\Biggr) \\
        & = \frac{1}{d}\Biggl(d - \sum_{\substack{z \in S_{1}(x)\setminus B_{1}(y), \\ z \not= j}}d(z,\phi(z)) - 2\Biggr) \\
        & = \frac{1}{d}\Biggl(d +1 - \sum_{z \in S_{1}(x)\setminus B_{1}(y)}d(z,\phi(z)) \Biggr) - \frac{1}{d}\Biggl(3 - d(j, \phi(j))\Biggr),
    \end{align*}
    where we used equation \ref{eq:3} for the second equality. Using the optimality of $\phi$ and \thref{Lin_Lu_Yau_curvature}, we obtain
    \begin{equation*}
        \kappa_{0}(x,y) = \kappa(x,y) - \frac{1}{d}\Biggl(3 - d(j, \phi(j))\Biggr).
    \end{equation*}
    The choice of $\phi$ and $j$ concludes the proof.
\end{proof}

The following result was previously established in \cite{Bourne2018} and is an immediate consequence of \thref{kappa_vergleich}.

\begin{corollary}\thlabel{vergleich_einfach}
    Let $G=(V,E)$ be a locally finite graph. Let $x,y\in V$ be of equal degree $d$ with $x \sim y$. Then,
    \begin{equation*}
        \kappa_{0}(x,y) = \kappa(x,y) - \frac{c}{d},
    \end{equation*}
    where $c \in \{0,1,2\}$.
\end{corollary}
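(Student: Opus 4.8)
The plan is to invoke \thref{kappa_vergleich} directly and to check in each of its two cases that the integer coefficient of $\frac{1}{d}$ lies in $\{0,1,2\}$.

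In the case $\vert N_{xy}\vert = d-1$, \thref{kappa_vergleich} states outright that $\kappa(x,y) - \kappa_{0}(x,y) = \frac{2}{d}$, so $c=2$. In the case $\vert N_{xy}\vert < d-1$, \thref{kappa_vergleich} gives
\begin{equation*}
    \kappa(x,y) - \kappa_{0}(x,y) = \frac{1}{d}\Biggl(3 - \sup_{\phi \in \mathcal{O}_{xy}} \sup_{z \in S_{1}(x)\setminus B_{1}(y)} d(z, \phi(z))\Biggr),
\end{equation*}
so it suffices to show that this double supremum is an integer lying in $\{1,2,3\}$. The first thing I would note is that $S_{1}(x)\setminus B_{1}(y)$ is non-empty, since $\vert S_{1}(x)\setminus B_{1}(y)\vert = d-1-\vert N_{xy}\vert \geq 1$; hence the supremum is over a non-empty finite set and is attained.

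The main step is a pair of elementary two-sided distance bounds. For the upper bound, for any $z \in S_{1}(x)\setminus B_{1}(y)$ and any assignment $\phi$, the walk $z \sim x \sim y \sim \phi(z)$ has length three, so $d(z,\phi(z)) \leq 3$. For the lower bound, $z \notin B_{1}(y)$ means $z \not\sim y$, whereas $\phi(z) \in S_{1}(y)$ satisfies $\phi(z) \sim y$; therefore $z \neq \phi(z)$ and $d(z,\phi(z)) \geq 1$. Consequently every such distance, and hence the double supremum, is an integer in $\{1,2,3\}$, which yields $c = 3 - \sup_{\phi}\sup_{z} d(z,\phi(z)) \in \{0,1,2\}$ and completes the argument.

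I do not anticipate a genuine obstacle here, as the corollary is essentially a bookkeeping consequence of the comparison theorem. The only points requiring a moment's care are confirming that the relevant $1$-sphere difference is non-empty and establishing the clean bound $1 \leq d(z,\phi(z)) \leq 3$, which together pin the supremum down to one of the three integers $1,2,3$.
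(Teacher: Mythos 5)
Your proposal is correct and follows exactly the route the paper intends: the paper states \thref{vergleich_einfach} as an immediate consequence of \thref{kappa_vergleich}, and your two-sided bound $1 \leq d(z,\phi(z)) \leq 3$ (via $z \not\sim y$, $\phi(z)\sim y$ for the lower bound and the path $z \sim x \sim y \sim \phi(z)$ for the upper bound), together with the non-emptiness of $S_{1}(x)\setminus B_{1}(y)$ when $\vert N_{xy}\vert < d-1$, is precisely the bookkeeping that makes the consequence immediate.
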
 

The following result provides a necessary and sufficient condition for $\kappa(x,y) = \kappa_{0}(x,y)$ on edges with endpoints of equal degree.

\begin{corollary}\thlabel{equality_condition}
    Let $G=(V,E)$ be a locally finite graph. Let $x,y \in V$ be of equal degree $d$ with $x \sim y$. Then 
    \begin{equation*}
        \kappa_{0}(x,y) = \kappa(x,y)
    \end{equation*} 
    if and only if there exists an optimal assignment $\phi \in \mathcal{O}_{xy}$ between $S_{1}(x) \setminus B_{1}(y)$ and $S_{1}(y) \setminus B_{1}(x)$ such that
    \begin{equation*}
        \exists z \in S_{1}(x) \setminus B_{1}(y): \; d(z, \phi(z)) = 3.
    \end{equation*}
\end{corollary}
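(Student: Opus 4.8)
The plan is to derive this corollary directly from \thref{kappa_vergleich}, which already provides an exact formula for $\kappa(x,y) - \kappa_0(x,y)$. The argument splits along the same dichotomy on $\vert N_{xy}\vert$, and the only real content is translating the vanishing of the stated supremum into the existence of a single optimal assignment realizing distance $3$.

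First I would dispose of the case $\vert N_{xy}\vert = d-1$. Here \thref{kappa_vergleich} gives $\kappa(x,y) - \kappa_0(x,y) = \frac{2}{d} > 0$, so the equality $\kappa_0(x,y) = \kappa(x,y)$ never holds. I would then check that the right-hand condition also fails: since $x\sim y$ and $N_{xy}$ accounts for $d-1$ of the $d$ neighbors of $x$, with the remaining neighbor being $y$ itself, we have $S_1(x) = \{y\}\cup N_{xy} \subseteq B_1(y)$, so $S_1(x)\setminus B_1(y) = \emptyset$. Consequently there is no vertex $z$ to witness $d(z,\phi(z)) = 3$, and the biconditional holds with both sides false.

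The substantive case is $\vert N_{xy}\vert < d-1$, where \thref{kappa_vergleich} yields
\[
    \kappa(x,y) - \kappa_0(x,y) = \frac{1}{d}\Biggl(3 - \sup_{\phi\in\mathcal{O}_{xy}}\sup_{z\in S_1(x)\setminus B_1(y)} d(z,\phi(z))\Biggr),
\]
so $\kappa_0(x,y)=\kappa(x,y)$ if and only if this double supremum equals $3$. The key observation is that for any $z\in S_1(x)$ and its image $\phi(z)\in S_1(y)$, the triangle inequality through the edge $x\sim y$ gives $d(z,\phi(z)) \le d(z,x)+d(x,y)+d(y,\phi(z)) = 3$, so the double supremum is always at most $3$. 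Since $G$ is locally finite, $S_1(x)\setminus B_1(y)$ is finite, hence $\mathcal{O}_{xy}$ is a finite set and the double supremum is attained. Therefore it equals $3$ exactly when there is some $\phi\in\mathcal{O}_{xy}$ and some $z$ with $d(z,\phi(z))=3$, which is precisely the stated condition.

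I do not expect a genuine obstacle, since the corollary is essentially a reformulation of \thref{kappa_vergleich}. The two points requiring care are the boundary case $\vert N_{xy}\vert = d-1$, where one must verify that both sides of the equivalence fail rather than inadvertently interpreting a supremum over an empty index set as a nonzero quantity, and the finiteness argument upgrading ``the supremum equals $3$'' to ``the value $3$ is attained'', which relies on local finiteness together with the uniform bound $d(z,\phi(z))\le 3$.
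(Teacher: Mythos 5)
Your proposal is correct and follows essentially the same route as the paper, which states this corollary as an immediate consequence of the exact formula in \thref{kappa_vergleich}: equality holds precisely when the double supremum equals $3$, which by the triangle inequality bound $d(z,\phi(z))\leq 3$ and finiteness of $S_{1}(x)\setminus B_{1}(y)$ is equivalent to the existence of an optimal assignment realizing distance $3$. Your explicit treatment of the boundary case $\vert N_{xy}\vert = d-1$ (where both sides of the equivalence fail, since $S_{1}(x)\setminus B_{1}(y)=\emptyset$ and the difference is $\frac{2}{d}>0$) and of the attainment of the supremum merely spells out details the paper leaves implicit.
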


\begin{remark}
    Therefore, in a regular graph with diameter at most two, every edge $x \sim y$ satisfies $\kappa(x,y) > \kappa_{0}(x,y)$. One class of such graphs are $d$-regular graphs with $d \geq \frac{\vert V \vert -1}{2}$.
\end{remark}

Next, we use \thref{equality_condition} to determine an interval where $\kappa$ and $\kappa_{0}$ always coincide.

\begin{corollary}
    Let $G=(V,E)$ be a locally finite graph. Let $x,y \in V$ be of equal degree $d$ with $x \sim y$. If 
    \begin{equation*}
        \kappa(x,y) < -1 + \frac{2\vert N_{xy} \vert+3}{d}.,
    \end{equation*}
    then $\kappa(x,y) = \kappa_{0}(x,y)$.
\end{corollary}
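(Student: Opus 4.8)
The plan is to translate the curvature hypothesis into a statement about the cost of an optimal assignment and then invoke \thref{equality_condition}. First I would dispose of the boundary case $\vert N_{xy}\vert = d-1$ separately: here $S_{1}(x)\setminus B_{1}(y)$ is empty, so \thref{Lin_Lu_Yau_curvature} gives $\kappa(x,y) = \frac{d+1}{d}$, whereas the right-hand side of the hypothesis equals $-1 + \frac{2(d-1)+3}{d} = \frac{d+1}{d}$. The strict inequality can therefore never hold, and the claim is vacuously true in this case.

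For the remaining case $\vert N_{xy}\vert < d-1$ I would set $n = \vert S_{1}(x)\setminus B_{1}(y)\vert = d - 1 - \vert N_{xy}\vert \geq 1$ and write $S = \inf_{\phi\in\mathcal{A}_{xy}}\sum_{z\in S_{1}(x)\setminus B_{1}(y)} d(z,\phi(z))$ for the minimal assignment cost, so that \thref{Lin_Lu_Yau_curvature} gives $\kappa(x,y) = \frac{d+1-S}{d}$. A direct rearrangement then shows that the hypothesis $\kappa(x,y) < -1 + \frac{2\vert N_{xy}\vert+3}{d}$ is equivalent to $S > 2(d-1-\vert N_{xy}\vert) = 2n$. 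This is the only genuine computation, and it is the point at which the specific threshold reveals itself to be calibrated exactly to the value $2n$.

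The crux is then a short counting argument. For every $z\in S_{1}(x)\setminus B_{1}(y)$ and its image $\phi(z)\in S_{1}(y)\setminus B_{1}(x)$, the triangle inequality through the edge $x\sim y$ yields $d(z,\phi(z)) \leq d(z,x) + d(x,y) + d(y,\phi(z)) = 3$. Fixing an optimal assignment $\phi\in\mathcal{O}_{xy}$, so that $\sum_{z} d(z,\phi(z)) = S > 2n$, I would argue that since this sum has $n$ terms each at most $3$, not all of them can be at most $2$ (otherwise the total would be at most $2n$); hence at least one $z$ satisfies $d(z,\phi(z)) = 3$.

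This is exactly the condition appearing in \thref{equality_condition}, which then delivers $\kappa_{0}(x,y) = \kappa(x,y)$; equivalently, the supremum in \thref{kappa_vergleich} equals $3$, making the difference $\frac{1}{d}(3-3)=0$. I do not expect a serious obstacle here, as each step is elementary; the only point requiring care is the boundary case $\vert N_{xy}\vert=d-1$, which must be checked to be vacuous rather than handled through \thref{kappa_vergleich}.
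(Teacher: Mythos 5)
Your proof is correct and takes essentially the same route as the paper's: the paper argues by contradiction, assuming $\kappa(x,y)\neq\kappa_{0}(x,y)$ so that by \thref{equality_condition} every optimal assignment satisfies $d(z,\phi(z))\leq 2$ for all $z$, and then derives $\kappa(x,y)\geq -1+\frac{2\vert N_{xy}\vert+3}{d}$ — exactly the contrapositive of your direct computation that the hypothesis forces the optimal cost above $2\vert S_{1}(x)\setminus B_{1}(y)\vert$ and hence some $d(z,\phi(z))=3$. Your separate check of the boundary case $\vert N_{xy}\vert=d-1$ is a harmless extra; the paper's estimate covers it implicitly, since the empty sum there yields $\kappa(x,y)=\frac{d+1}{d}$, which equals the threshold.
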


\begin{proof}
    We argue by contradiction. Assume $\kappa(x,y) \neq \kappa_{0}(x,y)$ and 
    \begin{equation*}
        \kappa(x,y) < -1 + \frac{2\vert N_{xy} \vert+3}{d}.
    \end{equation*}
    Let $\phi \in \mathcal{O}_{xy}$ be an optimal assignment between $S_{1}(x)\setminus B_{1}(y)$ and $S_{1}(y)\setminus B_{1}(x)$.
    According to \thref{equality_condition}, we have $d(z, \phi(z)) \leq 2$ for any $z \in S_{1}(x)\setminus B_{1}(y)$. Hence,
    \begin{align*}
        \kappa(x,y) &= \frac{1}{d} \Biggl(d+1 - \sum_{z \in S_{1}(x)\setminus B_{1}(y)}d(z, \phi(z))\Biggr) \\
                    &\geq \frac{1}{d} \Biggl(d + 1 - 2 \vert S_{1}(x)\setminus B_{1}(y)\vert\Biggr) \\
                    &= \frac{1}{d} \Biggl(-d + 3 + 2 \vert N_{xy} \vert\Biggr),
    \end{align*}
    contradicting our assumption.
\end{proof}

\section{Bone-idleness}\label{bone_idleness}

In this section, we examine edges with Ollivier-Ricci curvature equal to zero for every idleness parameter $\alpha$. This concept was originally introduced by Bourne et al. in \cite{Bourne2018} and referred to as \textit{bone-idle}.

\begin{definition}[Bone-idle]
    Let $G=(V,E)$ be a locally finite graph. We say an edge $x \sim y$ is \textit{bone-idle} if $\kappa_{\alpha}(x,y) = 0$ for every $\alpha \in [0,1]$. We say that $G$ is \textit{bone-idle} if every edge of $G$ is bone-idle.
\end{definition}

The notion of Ricci-flatness is weaker but strongly related to bone-idleness.

\begin{definition}[Ricci-flat]
    Let $G=(V,E)$ be a locally finite graph. We call $G$ \textit{Ricci-flat} if $\kappa(x,y) = 0$ for every edge $x\sim y\in E$. We call $G$ \textit{$\alpha$-Ricci flat} if $\kappa_{\alpha}(x,y) = 0$ for every edge $x\sim y\in E$.
\end{definition}

Due to the following Lemma, bone-idleness and the various notions of Ricci-flatness are closely related.

\begin{lemma}\thlabel{bone-idle_vs_ricci-flat}
    Let $G=(V,E)$ be a locally finite graph. Let $x,y \in V$ with $x\sim y$. Then the following are equivalent:
    \begin{enumerate}
        \item[$(i)$] $\kappa_{\alpha}(x,y) = 0$ for all $\alpha \in [0,1]$.
        \item[$(ii)$] $\kappa_{0}(x,y) = \kappa(x,y) = 0$.
    \end{enumerate}
\end{lemma}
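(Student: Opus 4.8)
The plan is to read off everything from the monotonicity of the normalized idleness function $h(\alpha) = \frac{\kappa_{\alpha}(x,y)}{1-\alpha}$, recorded in the discussion preceding the definition of the Lin-Lu-Yau curvature. The three facts I would use are: $h$ is increasing on $[0,1)$ (a consequence of the concavity of $\alpha \mapsto \kappa_{\alpha}(x,y)$ together with $\kappa_{1}(x,y)=0$); $h(0) = \kappa_{0}(x,y)$; and $\kappa(x,y) = \lim_{\alpha \to 1} h(\alpha)$ by definition. With these in hand the equivalence is essentially a sandwiching argument.

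For the implication $(i) \Rightarrow (ii)$, I would simply specialize: if $\kappa_{\alpha}(x,y)=0$ for every $\alpha \in [0,1]$, then taking $\alpha = 0$ gives $\kappa_{0}(x,y)=0$, and $\kappa(x,y) = \lim_{\alpha \to 1}\frac{\kappa_{\alpha}(x,y)}{1-\alpha} = \lim_{\alpha \to 1} 0 = 0$, so $(ii)$ holds.

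For the converse $(ii) \Rightarrow (i)$, I would assume $\kappa_{0}(x,y)=\kappa(x,y)=0$, so that $h(0)=0$ and $\lim_{\alpha \to 1} h(\alpha)=0$. Since $h$ is increasing on $[0,1)$, for every $\alpha \in [0,1)$ one has $0 = h(0) \leq h(\alpha) \leq \lim_{\beta \to 1} h(\beta) = 0$ (the upper bound because an increasing function is bounded above by its right-endpoint limit), forcing $h(\alpha)=0$. As $1-\alpha>0$ on $[0,1)$, this gives $\kappa_{\alpha}(x,y)=0$ for all $\alpha \in [0,1)$, and the endpoint $\alpha=1$ is covered by the identity $\kappa_{1}(x,y)=0$.

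The argument is short and carries no real obstacle; the only point deserving care is that $h$ is defined and monotone only on $[0,1)$, so the value at $\alpha = 1$ must be supplied separately from $\kappa_{1}(x,y)=0$ rather than inferred by continuity of $h$ at the endpoint. The substance of the lemma is entirely contained in the monotonicity result of Lin, Lu, and Yau.
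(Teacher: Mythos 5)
Your proposal is correct and is essentially the paper's argument in a different dress: the paper proves $(ii)\Rightarrow(i)$ by sandwiching $\kappa_{\alpha}(x,y)$ between the chord $\alpha\kappa_{1}+(1-\alpha)\kappa_{0}=0$ and the tangent line at $\alpha=1$, namely $\kappa_{\alpha}\leq(1-\alpha)\kappa(x,y)=0$, and these are exactly your two inequalities $h(\alpha)\geq h(0)$ and $h(\alpha)\leq\lim_{\beta\to 1}h(\beta)$ rewritten via $h(\alpha)=\kappa_{\alpha}(x,y)/(1-\alpha)$, whose monotonicity is itself the stated consequence of concavity and $\kappa_{1}(x,y)=0$. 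Your handling of the endpoint $\alpha=1$ and of $(i)\Rightarrow(ii)$ matches the paper as well, so there is nothing to fix.
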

    
\begin{proof}
    \emph{(ii)$\implies$(i)}. Let $\alpha \in (0,1)$ be arbitrary. Assume $\kappa_{0}(x,y) = \kappa(x,y) = 0$. Recall that the idleness function is concave, and also note that $\kappa_{1}(x,y)=0$. Hence,
    \begin{equation*}
        \kappa_{\alpha}(x,y) \geq \alpha \kappa_{1}(x,y) + (1-\alpha) \kappa_{0}(x,y) = 0.
    \end{equation*}
    The other inequality follows from the fact that the graph of a concave function lies below its tangent line at each point and that $\kappa_{1}' = -\kappa$:
    \begin{equation*}
        \kappa_{\alpha}(x,y) \leq \kappa_{1}(x,y) + \kappa_{1}'(x,y)\cdot(\alpha - 1) = \kappa(x,y) \cdot (1-\alpha) =  0.
    \end{equation*}
    
    \emph{(i) $\implies$ (ii)}. This is an immediate consequence of the definition of $\kappa$.
\end{proof}

Therefore, a graph is bone-idle if and only if it is Ricci-flat and $0$-Ricci-flat.
Previous works have addressed the classification of Ricci-flat graphs. Cushing et al. \cite{Cushing2018} classified all Ricci-flat graphs with girth at least five.

\begin{theorem}[\cite{Cushing2018}, Theorem 1]\thlabel{ricci_flat}
    Let $G=(V,E)$ be a locally finite graph with girth at least five. Suppose that $G$ is Ricci-flat. Then $G$ is isomorphic to one of the following graphs:
    \begin{enumerate}
        \item[$(i)$] The infinite path,
        \item[$(ii)$] the cycle graph $C_{n}$ for $n \geq 6$,
        \item[$(iii)$] the dodecahedral graph,
        \item[$(iv)$] the Petersen graph,
        \item[$(v)$] the half-dodecahedral graph,
        \item[$(vi)$] the Triplex graph.
    \end{enumerate}
\end{theorem}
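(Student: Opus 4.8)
The plan is to reduce Ricci-flatness under the girth hypothesis to a purely combinatorial condition on a minimum-cost matching between punctured neighborhoods, and then to classify the admissible graphs by first bounding all degrees and afterwards carrying out a local-to-global rigidity analysis in the remaining cubic case.

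\emph{Reduction to a matching formula.} Girth at least five means $G$ is triangle- and $C_{4}$-free, so for every edge $x\sim y$ we have $N_{xy}=\emptyset$, any two vertices share at most one common neighbor, and every $x_{i}\in S_{1}(x)\setminus\{y\}$ and $y_{j}\in S_{1}(y)\setminus\{x\}$ satisfies $d(x_{i},y_{j})\in\{2,3\}$ (they can neither coincide nor be adjacent without producing a triangle or a $4$-cycle). When $d_{x}=d_{y}=d$, \thref{Lin_Lu_Yau_curvature} applies verbatim, and $\kappa(x,y)=0$ is equivalent to $\inf_{\phi}\sum_{z}d(z,\phi(z))=d+1$. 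For edges with $d_{x}\neq d_{y}$ I would reprove the analogous statement by computing $W_{1}(\mu_{x}^{\alpha},\mu_{y}^{\alpha})$ directly as $\alpha\to 1$: by \thref{dontmove} the shared mass stays put, the excess at $x$ is shipped to $y$ along the edge at cost one, and the remaining neighbor masses are routed to the $y_{j}$ at cost at least two; dividing by $1-\alpha$ yields a matching formula valid for unequal degrees as well. A direct computation also shows that a degree-one vertex $x$ forces $\kappa=\tfrac{2}{d_{x}}>0$, so $\delta(G)\geq 2$.

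\emph{Degree bound.} In the equal-degree formula the matching sum has $d-1$ summands, each at least $2$, so $\kappa(x,y)=0$ gives $2(d-1)\leq d+1$, hence $d\leq 3$. The unequal-degree computation produces an inequality of the shape $\tfrac{2}{d_{x}}+\tfrac{1}{d_{y}}\geq 1$ (with $d_{x}>d_{y}$), which already excludes $d_{y}\geq 3$ and bounds $d_{x}$; the few borderline configurations are eliminated by sharper distance bookkeeping, since equality forces \emph{all} cross-pairs to be at distance exactly two, which then propagates to a contradiction. Thus no edge joins vertices of different degree, and a connected Ricci-flat graph of girth at least five is either $2$-regular or $3$-regular.

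\emph{The $2$-regular case.} Here $S_{1}(x)\setminus\{y\}$ and $S_{1}(y)\setminus\{x\}$ are singletons $\{x_{1}\},\{y_{1}\}$, and $\kappa(x,y)=0$ is equivalent to $d(x_{1},y_{1})=3$. This fails for $C_{5}$, where $d(x_{1},y_{1})=2$ gives $\kappa=\tfrac12$, but holds for the two-way infinite path and for $C_{n}$ with $n\geq 6$, producing cases $(i)$ and $(ii)$.

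\emph{The cubic case, which is the crux.} Writing $S_{1}(x)=\{y,x_{1},x_{2}\}$ and $S_{1}(y)=\{x,y_{1},y_{2}\}$, the condition $\kappa(x,y)=0$ forces the minimum matching sum to equal $4$ with two summands each at least $2$, so \emph{both} matched pairs lie at distance exactly two. Since $G$ is $C_{4}$-free, a distance-two pair has a unique common neighbor, and hence every edge lies on two distinct $5$-cycles. I would use this rigidity to grow the graph outward from a single vertex and its neighborhood: the forced distance-two matchings determine the successive shells up to finitely many branchings, and consistency is possible only when the structure closes up, which rules out any infinite cubic example and leaves a finite list of candidates to be verified individually against $\kappa\equiv 0$ and girth five. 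This enumeration yields exactly the dodecahedral graph, the Petersen graph, the half-dodecahedral graph, and the Triplex graph, i.e.\ cases $(iii)$--$(vi)$. The main obstacle is precisely this last step: the local $5$-cycle rigidity pins the graph down tightly but admits several non-isomorphic closures, so certifying that these four are the only ones requires a careful and somewhat lengthy finite case analysis.
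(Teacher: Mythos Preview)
This theorem is not proved in the present paper: it is quoted verbatim from Cushing et al.\ \cite{Cushing2018} and used as a black box to deduce \thref{bone_idle_girth_5}. There is therefore no ``paper's own proof'' to compare your attempt against.

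Your outline is a plausible reconstruction of how such a classification could proceed, and the reduction to the matching formula together with the degree bound $d\leq 3$ in the equal-degree case is correct and clean. Two places deserve flagging. First, your treatment of edges with $d_{x}\neq d_{y}$ is only a sketch: the equal-degree formula of \thref{Lin_Lu_Yau_curvature} does not apply, and you defer to an unproven ``analogous statement'' and an inequality ``of the shape $\tfrac{2}{d_{x}}+\tfrac{1}{d_{y}}\geq 1$'' whose derivation is not given; the borderline cases are dismissed by ``sharper distance bookkeeping'' without details. Since the final list contains only regular graphs, ruling out mixed degrees is genuinely needed and cannot be hand-waved. Second, and as you yourself acknowledge, the cubic enumeration is the entire substance of the theorem: the local $5$-cycle rigidity you identify is the right starting point, but turning ``finitely many branchings'' into the precise list $(iii)$--$(vi)$ is a substantial case analysis that you have not carried out. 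What you have written is an outline of a strategy, not a proof; the original argument in \cite{Cushing2018} is where the actual work resides.
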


On the other hand, Bhattacharya et al. \cite{Bhattacharya2015} classified all graphs that are $0$-Ricci-flat and have girth at least five.

\begin{theorem}[\cite{Bhattacharya2015}, Corollary 4.1]\thlabel{0_ricci_flat}
    Let $G=(V,E)$ be a locally finite graph with girth at least five. Suppose that $G$ is 0-Ricci-flat. Then $G$ is isomorphic to one of the following graphs:
    \begin{enumerate}
        \item[$(i)$] The infinite path,
        \item[$(ii)$] the infinite ray, 
        \item[$(iii)$] the path $P_{n}$ for $n \geq 2$,
        \item[$(iv)$] the cycle graph $C_{n}$ for $n \geq 5$,
        \item[$(v)$] the star graph $T_{n}$ for $n\geq 3$.   
    \end{enumerate}
\end{theorem}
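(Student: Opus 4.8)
The plan is to reduce the global classification to a purely local condition on each edge, expressed only through the degrees of its two endpoints, and then to solve the resulting arithmetic constraint graph-theoretically.

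First I would exploit the girth hypothesis to pin down the geometry around an edge $x \sim y$. Since $G$ has girth at least five, adjacent vertices share no common neighbour, and no neighbour $a$ of $x$ is adjacent to a neighbour $b$ of $y$ (such an adjacency would close a four-cycle). Consequently the supports of $\mu_x^0$ and $\mu_y^0$ are disjoint, every pair of points drawn from these supports is at distance at least one, and hence $W_1(\mu_x^0,\mu_y^0) \geq 1$, i.e. $\kappa_0(x,y) \leq 0$. Writing the curvature as the mass balance $\nu_0 - \nu_2 - 2\nu_3$ (the masses moved distances $0$, $2$, $3$ under an optimal plan) with $\nu_0 = 0$, I obtain that $\kappa_0(x,y) = 0$ holds if and only if all mass can be transported at distance exactly one, that is $W_1(\mu_x^0,\mu_y^0) = 1$.

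The second step is to decide exactly when such a distance-one transport exists. Listing the neighbours of $x$ as $y, a_1, \dots, a_{d_x-1}$ and those of $y$ as $x, b_1, \dots, b_{d_y-1}$, the only pairs at distance one between the two supports are $\{y,x\}$, $\{y,b_j\}$, and $\{a_i,x\}$; by the girth condition each $a_i$ can send mass only to $x$ and each $b_j$ can receive mass only from $y$. A short flow (Hall-type) argument then forces the amounts $a_i \to x$ and $y \to b_j$, leaving a single free mass $y \to x$ whose nonnegativity is the only remaining constraint. I expect this to collapse to the clean symmetric inequality $\frac{1}{d_x} + \frac{1}{d_y} \geq 1$, and I would verify that its failure yields $W_1 > 1$ strictly, so that $\kappa_0(x,y) < 0$. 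Solving this inequality over the positive integers gives precisely two possibilities for an edge: at least one endpoint is a leaf, or both endpoints have degree two. For the equal-degree sub-cases \thref{kappa_null} offers an alternative quick derivation, but the direct flow computation handles unequal degrees uniformly.

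Finally I would globalise, assuming $G$ connected with at least one edge. If $G$ has a vertex $v$ of degree at least three, then every edge at $v$ must have its other endpoint a leaf, so all neighbours of $v$ are leaves and connectedness forces $G$ to be a star with centre $v$, i.e. $T_n$ with $n \geq 3$. Otherwise every vertex has degree at most two, so $G$ is a finite path, an infinite ray, a two-way infinite path, or a cycle; the girth constraint rules out $C_3$ and $C_4$, leaving $C_n$ for $n \geq 5$, while every path automatically satisfies the edge condition (interior edges have endpoint degrees $2,2$ and terminal edges involve a leaf). Collecting the cases reproduces the stated list. The main obstacle is the second step: carefully establishing that the distance-one feasibility is governed exactly by $\frac{1}{d_x}+\frac{1}{d_y} \geq 1$ and, crucially, that infeasibility forces the strict inequality $W_1 > 1$ rather than merely $W_1 \geq 1$. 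The remaining work, namely solving the integer inequality and the connectivity bookkeeping (degenerate isolated cases, the precise convention for $T_n$, and disconnected graphs treated componentwise), is routine once the edge criterion is in place.
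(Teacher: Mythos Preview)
The paper does not prove this statement; it is quoted from \cite{Bhattacharya2015} and used as a black box. So there is no ``paper's own proof'' to compare against here.

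That said, your outline is correct and is essentially the argument one finds in the original source. The key reduction is exactly the one you isolate: under girth at least five the supports of $\mu_x^0$ and $\mu_y^0$ are disjoint, the only distance-one pairs are $(y,x)$, $(y,b_j)$, $(a_i,x)$, and a feasible distance-one transport exists iff the single free variable $\pi(y,x)=\tfrac{1}{d_x}+\tfrac{1}{d_y}-1$ is nonnegative. For the strict inequality when the condition fails, note that the sinks $b_1,\dots,b_{d_y-1}$ together require mass $(d_y-1)/d_y$ but can receive at distance one only from $y$, which carries mass $1/d_x$; the shortfall $1-\tfrac{1}{d_x}-\tfrac{1}{d_y}>0$ must therefore arrive from some $a_i$ at distance at least two, giving $W_1>1$ and hence $\kappa_0<0$. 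This is the Hall-type step you allude to, and it is routine once written out. The globalisation (degree $\geq 3$ forces a star, otherwise maximum degree two forces a path or cycle, girth excludes $C_3,C_4$) is straightforward, and the converse verification that each listed graph is $0$-Ricci-flat follows immediately from the same degree criterion. Nothing is missing.
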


Combining \thref{0_ricci_flat} and \thref{ricci_flat} yields the following result:

\begin{corollary}\thlabel{bone_idle_girth_5}
    Let $G=(V,E)$ be a locally finite graph with girth at least five. Suppose that $G$ is bone-idle. Then $G$ is isomorphic to one of the following graphs:
    \begin{enumerate}
        \item[$(i)$] The infinite path,
        \item[$(ii)$] the cycle graph $C_{n}$ for $n \geq 6$. 
    \end{enumerate}
\end{corollary}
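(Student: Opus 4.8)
The plan is to exploit the equivalence recorded in \thref{bone-idle_vs_ricci-flat}, which states that bone-idleness is the same as being simultaneously Ricci-flat and $0$-Ricci-flat. Under the girth hypothesis, each of these two properties comes equipped with a complete classification: \thref{ricci_flat} for Ricci-flatness and \thref{0_ricci_flat} for $0$-Ricci-flatness. A bone-idle graph of girth at least five must therefore appear in both classification lists, so the whole task reduces to intersecting them.

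First I would invoke \thref{bone-idle_vs_ricci-flat} to conclude that $G$ is both Ricci-flat and $0$-Ricci-flat. Since $G$ has girth at least five, \thref{ricci_flat} tells us that $G$ is one of: the infinite path, a cycle $C_n$ with $n \geq 6$, the dodecahedral graph, the Petersen graph, the half-dodecahedral graph, or the Triplex graph. Independently, \thref{0_ricci_flat} tells us that $G$ is one of: the infinite path, the infinite ray, a path $P_n$ with $n \geq 2$, a cycle $C_n$ with $n \geq 5$, or a star $T_n$ with $n \geq 3$.

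It then remains to take the intersection of these two lists. The infinite path occurs in both. Among cycles, $C_n$ with $n \geq 6$ lies in both lists, whereas $C_5$ --- although $0$-Ricci-flat --- is absent from the Ricci-flat list and is hence excluded. The remaining members of the Ricci-flat list (the dodecahedral, Petersen, half-dodecahedral, and Triplex graphs) do not occur among the $0$-Ricci-flat graphs, and conversely the infinite ray, the paths $P_n$, and the stars $T_n$ are absent from the Ricci-flat list. Thus the only graphs lying in both lists are the infinite path and the cycles $C_n$ with $n \geq 6$, which is precisely the claimed conclusion.

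Since the argument is a direct set-theoretic intersection of two previously established classifications, I do not expect any substantial obstacle. The only point that requires genuine care is matching the length and degree ranges correctly across the two theorems --- in particular, noticing that the cycle bound tightens from $n \geq 5$ to $n \geq 6$ precisely because $C_5$ fails to be Ricci-flat, so that imposing both conditions simultaneously removes it.
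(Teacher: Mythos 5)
Your proposal is correct and matches the paper's own argument exactly: the paper obtains this corollary by combining \thref{bone-idle_vs_ricci-flat} with the classifications in \thref{ricci_flat} and \thref{0_ricci_flat} and intersecting the two lists. Your additional observation that $C_{5}$ drops out because it is $0$-Ricci-flat but not Ricci-flat is precisely the point the paper highlights in its subsequent remark that no bone-idle graphs of girth exactly five exist.
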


\begin{remark}
    Hence, there are no bone-idle graphs with girth equal to five.
\end{remark}

The full classification of Ricci-flat and bone-idle graphs appears to be a difficult graph theory problem, which is still open. 
In the following, we leverage our previous findings to investigate the local structure of regular bone-idle graphs.

\subsection{Local structures}

For the subsequent discussion, we associate the following two quantities with an assignment $\phi \in \mathcal{A}_{xy}$:
\begin{itemize}
    \item $N_{1}(\phi)$: The number of neighbors of $x$, forming a 4-cycle based at the edge $x \sim y$, with their image under $\phi$. That is, $N_{1}(\phi) = \vert \{ z \in S_{1}(x)\setminus B_{1}(y): d(z, \phi(z)) = 1\}\vert.$ 
    \item$N_{2}(\phi)$: The number of neighbors of $x$, forming a 5-cycle based at the edge $x \sim y$, with their image under $\phi$. That is,
    $N_{2}(\phi) =\vert \{ z \in S_{1}(x)\setminus B_{1}(y): d(z, \phi(z)) = 2\}\vert .$ 
\end{itemize}

Using this notation, we can examine the local structure of regular Ricci-flat graphs of girth four.

\begin{theorem}\thlabel{ricci_flat_condition}
    Let $G=(V,E)$ be a locally finite graph. Let $x,y\in V$ be of equal degree $d$ with $x\sim y$. Furthermore, assume that $N_{xy} = \emptyset$. Then $\kappa(x,y) = 0$ if and only if one of the following holds:
    \begin{enumerate}
        \item[$(i)$] There exists an optimal assignment $\phi \in \mathcal{O}_{xy}$ such that $N_{1}(\phi) = d-2$ and $N_{2}(\phi) = 0$.
        \item[$(ii)$] There exists an optimal assignment $\phi \in \mathcal{O}_{xy}$ such that $N_{1}(\phi)= d-3$ and $N_{2}(\phi)= 2$.
    \end{enumerate}
\end{theorem}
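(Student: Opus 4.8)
The plan is to reduce the whole statement to a short counting argument built on the curvature formula in \thref{Lin_Lu_Yau_curvature}. First I would exploit the hypothesis $N_{xy} = \emptyset$ to pin down the relevant cardinalities. Since $x \sim y$ have equal degree $d$ and no common neighbor, we have $S_{1}(x) \cap S_{1}(y) = \emptyset$, so $S_{1}(x) \setminus B_{1}(y) = S_{1}(x) \setminus \{y\}$, which has exactly $d-1$ elements; symmetrically $\vert S_{1}(y) \setminus B_{1}(x) \vert = d-1$. Hence every assignment $\phi \in \mathcal{A}_{xy}$ is a bijection between two disjoint sets of size $d-1$.

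Next I would record the pointwise bounds on the transport distance. For any $z \in S_{1}(x) \setminus B_{1}(y)$ one has the path $z \sim x \sim y \sim \phi(z)$, so $d(z, \phi(z)) \leq 3$; and since the domain and codomain of $\phi$ are disjoint (again because $N_{xy} = \emptyset$), $z \neq \phi(z)$ forces $d(z, \phi(z)) \geq 1$. Thus each summand lies in $\{1, 2, 3\}$.

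Then I would translate the condition $\kappa(x,y) = 0$ through \thref{Lin_Lu_Yau_curvature}: it is equivalent to the optimal total cost satisfying $\inf_{\phi} \sum_{z} d(z,\phi(z)) = d+1$. Fixing an optimal assignment $\phi \in \mathcal{O}_{xy}$ and letting $N_{1}(\phi), N_{2}(\phi), N_{3}(\phi)$ count the summands equal to $1, 2, 3$ respectively, the cardinality identity $N_{1}+N_{2}+N_{3} = d-1$ together with the optimal-cost identity $N_{1}+2N_{2}+3N_{3} = d+1$ combine, upon subtraction, into $N_{2} + 2N_{3} = 2$. The only non-negative integer solutions are $(N_{2}, N_{3}) = (0,1)$, forcing $N_{1} = d-2$ and giving case $(i)$, and $(N_{2}, N_{3}) = (2,0)$, forcing $N_{1} = d-3$ and giving case $(ii)$. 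For the converse direction, an optimal $\phi$ of either type has total cost $(d-2) + 3 = d+1$ or $(d-3) + 4 = d+1$, so \thref{Lin_Lu_Yau_curvature} immediately yields $\kappa(x,y) = 0$.

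The argument is essentially bookkeeping, so I do not anticipate a real obstacle. The only points requiring a little care are the two cardinality computations (making sure $y$ is the single element removed from $S_{1}(x)$ and that $N_{xy} = \emptyset$ is exactly what makes the two assignment-sets disjoint and of equal size $d-1$) and the remark that the infimum in \thref{Lin_Lu_Yau_curvature} is over the finite set $\mathcal{A}_{xy}$ and hence attained, so that an optimal $\phi$ realizing the prescribed counts genuinely exists.
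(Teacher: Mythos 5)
Your proposal is correct and follows essentially the same route as the paper: both reduce the claim to the exact curvature formula of \thref{Lin_Lu_Yau_curvature}, observe that each $d(z,\phi(z))\in\{1,2,3\}$, and solve the resulting integer-counting constraints, with your identity $N_{2}+2N_{3}=2$ being just an algebraic rearrangement of the paper's $2N_{1}(\phi)+N_{2}(\phi)=2d-4$. Your added remarks (that the domain and codomain have size $d-1$ and that the infimum over the finite set $\mathcal{A}_{xy}$ is attained) are correct and only make explicit what the paper leaves implicit.
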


\begin{proof}
    "$\implies$"Assume $\kappa(x,y) = 0$. Let $\phi \in \mathcal{O}_{xy}$ be an optimal assignment between $S_{1}(x) \setminus B_{1}(y)$ and $S_{1}(y) \setminus B_{1}(x)$. According to \thref{Lin_Lu_Yau_curvature}, we have
    \begin{align*}
        \kappa(x,y) &= \frac{1}{d}\Biggl(d+1 -\mathlarger{\sum}_{z \in S_{1}(x) \setminus B_{1}(y)}d(z, \phi(z)) \Biggr) \\
        &= \frac{1}{d}\Biggl(-2d + 4 + 2 N_{1}(\phi) + N_{2}(\phi)\Biggr) \\
        &= 0.
    \end{align*}
    This can only be the case if one of the cases stated in \thref{ricci_flat_condition} holds true.

    "$\impliedby$" This is an immediate consequence of \thref{Lin_Lu_Yau_curvature}.
\end{proof}

Next, we present a necessary and sufficient condition for an edge $x\sim y$ to have $\kappa_{0}(x,y)=0$. This condition was already established by Bhattacharya et al. \cite{Bhattacharya2015}.

\begin{theorem}
    Let $G=(V,E)$ be a locally finite graph. Let $x,y\in V$ be of equal degree $d$ with $x\sim y$. Furthermore, assume that $N_{xy} = \emptyset$. Then $\kappa_{0}(x,y) = 0$ if and only if there exists a perfect matching between $S_{1}(x)$ and $S_{1}(y)$.
\end{theorem}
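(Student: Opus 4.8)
The plan is to use the formula from \thref{kappa_null} for the $0$-Ollivier-Ricci curvature together with the hypothesis $N_{xy} = \emptyset$. Since $x$ and $y$ have no common neighbors, we have $S_1(x) \setminus S_1(y) = S_1(x) \setminus \{y\}$ and similarly $S_1(y) \setminus S_1(x) = S_1(y) \setminus \{x\}$; note however that $y \in S_1(x)$ and $x \in S_1(y)$, so care is needed about whether $x$ and $y$ themselves are included in the sets being matched. In fact, under the convention of \thref{kappa_null}, the relevant sets are $S_1(x) \setminus S_1(y)$ and $S_1(y) \setminus S_1(x)$, each of cardinality $d$ (they contain $y$ and $x$ respectively, since $y \notin S_1(y)$ and $x \notin S_1(x)$). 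The first thing I would do is write out explicitly that
\begin{equation*}
    \kappa_0(x,y) = \frac{1}{d}\Biggl(d - \inf_{\phi} \sum_{z \in S_1(x) \setminus S_1(y)} d(z, \phi(z))\Biggr),
\end{equation*}
and observe that $\kappa_0(x,y) = 0$ holds if and only if the infimum equals $d$.

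Next I would analyze when this infimum can equal $d$. Since each term $d(z,\phi(z))$ in the sum is at least $1$ (distinct vertices have distance at least one) and there are exactly $d$ terms, the infimum is always at least $d$, with equality precisely when there exists a bijection $\phi$ achieving $d(z,\phi(z)) = 1$ for every $z$, i.e.\ an assignment in which every vertex is sent to an adjacent vertex. Such a bijection is exactly a \emph{perfect matching} between the vertex sets $S_1(x) \setminus S_1(y)$ and $S_1(y) \setminus S_1(x)$ in the graph $G$. The forward direction ("$\implies$") then amounts to: if $\kappa_0(x,y) = 0$, the infimum is $d$, forcing the optimal $\phi$ to realize distance $1$ on every vertex, which yields the perfect matching. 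The reverse direction ("$\impliedby$") is the observation that a perfect matching gives a bijection $\phi$ with total cost exactly $d$, hence the infimum is $d$ and $\kappa_0(x,y) = 0$.

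The main point requiring care is the bookkeeping about which vertices belong to the matched sets. Because $N_{xy} = \emptyset$, the sets $S_1(x) \setminus S_1(y)$ and $S_1(y) \setminus S_1(x)$ coincide with $S_1(x)$ and $S_1(y)$ up to the swap of the endpoints $x$ and $y$: precisely, $y \in S_1(x) \setminus S_1(y)$ is matched and $x \in S_1(y) \setminus S_1(x)$ is matched. The edge $x \sim y$ itself provides the pair matching $y$ to $x$ at distance one, so the statement about a perfect matching between $S_1(x)$ and $S_1(y)$ is consistent once one identifies $S_1(x)$ with $S_1(x) \setminus S_1(y)$ and $S_1(y)$ with $S_1(y) \setminus S_1(x)$ under the no-common-neighbor assumption. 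I expect the only genuine obstacle to be stating cleanly that ``perfect matching between $S_1(x)$ and $S_1(y)$'' is the correct combinatorial object, and verifying that every term being forced to equal $1$ is equivalent to the existence of such a matching; this is a direct consequence of the fact that in the shortest-path metric a bijection has total cost $d$ if and only if it pairs each vertex with a neighbor.
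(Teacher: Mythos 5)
Your proof is correct. Note that the paper itself does not prove this theorem at all: it states the result and attributes it to Bhattacharya--Mukherjee \cite{Bhattacharya2015}, so your argument is not a variant of the paper's proof but a self-contained derivation from \thref{kappa_null}, which is exactly in the spirit of how the paper handles the analogous statements for $\kappa$ (e.g.\ \thref{ricci_flat_condition}). The mechanism is sound: since $N_{xy}=\emptyset$, the sets $S_{1}(x)\setminus S_{1}(y)$ and $S_{1}(y)\setminus S_{1}(x)$ are in fact \emph{exactly} $S_{1}(x)$ and $S_{1}(y)$ (as $y\notin S_{1}(y)$ and $x\notin S_{1}(x)$), each of size $d$; disjointness of the two sets forces $z\neq\phi(z)$ and hence $d(z,\phi(z))\geq 1$ for every bijection $\phi$, so the infimum in \thref{kappa_null} is at least $d$, with equality precisely when some bijection realizes distance $1$ everywhere, i.e.\ a perfect matching between $S_{1}(x)$ and $S_{1}(y)$. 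One small slip to clean up: your opening identity $S_{1}(x)\setminus S_{1}(y)=S_{1}(x)\setminus\{y\}$ is wrong ($y$ is \emph{not} removed, since $y\notin S_{1}(y)$), and the later phrase ``up to the swap of the endpoints'' is unnecessary hedging --- the sets coincide with the full spheres on the nose, which is what makes the statement ``perfect matching between $S_{1}(x)$ and $S_{1}(y)$'' literally correct; you do implicitly correct this mid-argument, but the final write-up should state it plainly. It is also worth making explicit that each $z$ is distinct from $\phi(z)$ \emph{because} the domain and codomain are disjoint (a consequence of $N_{xy}=\emptyset$), as this is the only place the hypothesis enters the lower bound.
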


Therefore, a $0$-Ricci-flat, regular graph of girth four must have a perfect matching between the neighborhoods $S_{1}(x)$ and $S_{1}(y)$ for every edge $x\sim y$. Examples are the $n$-dimensional hypercube $\mathcal{Q}_{n}$, the $n$-dimensional integer lattice $\mathbb{Z}^{n}$ and the complete bipartite graph $K_{n,n}$.

The subsequent theorem provides a necessary and sufficient condition for an edge in a graph of girth four to be bone-idle.

\begin{theorem}\thlabel{bone_idle}
    Let $G=(V,E)$ be a locally finite graph. Let $x,y\in V$ be of equal degree $d$ with $x\sim y$. Furthermore, assume that $N_{xy} = \emptyset$. Then the edge $x\sim y$ is bone-idle if and only if there exists an optimal assignment $\phi \in \mathcal{O}_{xy}$ such that $N_{1}(\phi) = d-2$ and $N_{2}(\phi) = 0$.
\end{theorem}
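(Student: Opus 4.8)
The plan is to reduce bone-idleness of the edge to the two conditions $\kappa(x,y) = 0$ and $\kappa_0(x,y) = \kappa(x,y)$, for which all the necessary machinery is already in place.

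First I would invoke \thref{bone-idle_vs_ricci-flat} to replace bone-idleness by the requirement that $\kappa_0(x,y) = \kappa(x,y) = 0$. Since $x$ and $y$ have equal degree, this is equivalent to asking simultaneously that $\kappa(x,y) = 0$ and that the two curvatures agree. I would prove each implication of the theorem against this reformulation.

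For the backward implication, I would observe that the stated assignment condition --- namely $N_{1}(\phi) = d-2$ and $N_{2}(\phi) = 0$ for some $\phi \in \mathcal{O}_{xy}$ --- is exactly case $(i)$ of \thref{ricci_flat_condition}, giving $\kappa(x,y) = 0$ at once. Because $N_{xy} = \emptyset$ forces $\vert S_{1}(x) \setminus B_{1}(y)\vert = d-1$ and every pair has distance in $\{1,2,3\}$, the equality $N_{1}(\phi) + N_{2}(\phi) = d - 2$ leaves precisely one pair at distance three. Feeding this distance-three pair into \thref{equality_condition} yields $\kappa_{0}(x,y) = \kappa(x,y)$, and hence both curvatures vanish, so the edge is bone-idle.

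For the forward implication, I would start from $\kappa(x,y) = 0$ together with $\kappa_{0}(x,y) = \kappa(x,y)$. The equality condition \thref{equality_condition} then produces an optimal $\phi \in \mathcal{O}_{xy}$ and a vertex $z$ with $d(z, \phi(z)) = 3$. Since $\phi$ is optimal and $\kappa(x,y) = 0$, \thref{Lin_Lu_Yau_curvature} fixes the total transport cost at $\sum_{z} d(z, \phi(z)) = d+1$. Combining this with $\vert S_{1}(x) \setminus B_{1}(y)\vert = d-1$ gives a short linear system in the numbers of distance-one, distance-two, and distance-three pairs; the guaranteed presence of a distance-three pair forces exactly one such pair, no distance-two pairs, and therefore $N_{1}(\phi) = d-2$ and $N_{2}(\phi) = 0$.

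The argument is a bookkeeping synthesis of earlier results, so I do not expect a serious obstacle. The one point requiring care is seeing that bone-idleness selects case $(i)$ over case $(ii)$ of \thref{ricci_flat_condition}: case $(ii)$ realizes $\kappa(x,y) = 0$ using only distance-one and distance-two pairs, hence has no distance-three pair in that assignment and fails the equality condition needed for $\kappa_{0} = \kappa$. The linear-system count in the forward direction is the step that actually pins down the distance distribution, but it is routine.
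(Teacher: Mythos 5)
Your proposal is correct and follows essentially the same route as the paper: both directions hinge on \thref{equality_condition} producing (or being fed) an optimal assignment with a distance-three pair, combined with the curvature formula to pin down $N_{1}(\phi)=d-2$, $N_{2}(\phi)=0$. Your forward direction solves the linear system from \thref{Lin_Lu_Yau_curvature} directly rather than citing \thref{ricci_flat_condition} plus the strict inequality $N_{1}(\phi)+N_{2}(\phi)<d-1$ as the paper does, but this is the same computation in different packaging.
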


\begin{proof}
    Assume the edge $x \sim y$ is bone-idle, i.e., $\kappa(x,y) = \kappa_{0}(x,y) = 0$. 
    According to \thref{equality_condition} there exists an optimal assignment $\phi \in \mathcal{O}_{xy}$ between $S_{1}(x)\setminus B_{1}(y)$ and $S_{1}(y)\setminus B_{1}(x)$ and a $z \in S_{1}(x)\setminus B_{1}(y)$, such that $d(z,\phi(z)) = 3$. Therefore, 
    \begin{equation}\label{eq:6}
        N_{1}(\phi) + N_{2}(\phi) < \vert S_{1}(x)\setminus B_{1}(y) \vert = d-1.
    \end{equation}
    As $\kappa(x,y) = 0$ we can apply \thref{ricci_flat_condition}. Using equation \ref{eq:6}, we conclude that $N_{1}(\phi) = d-2$ and $N_{2}(\phi) = 0$ must hold.
    
    Conversely, assume that there exists an optimal assignment $\phi \in \mathcal{O}_{xy}$ that satisfies $N_{1}(\phi) = d-2$ and $N_{2}(\phi) = 0$. Since 
    \begin{equation*}
        N_{1}(\phi) + N_{2}(\phi) = d-2 < \vert S_{1}(x)\setminus B_{1}(y) \vert,
    \end{equation*} 
    there exists a $z \in S_{1}(x)\setminus B_{1}(y)$ such that $d(z, \phi(z)) = 3$. Hence, according to \thref{equality_condition}, we have $\kappa(x,y) = \kappa_{0}(x,y)$.
    According to \thref{ricci_flat_condition}, we have $\kappa(x,y) = 0$, which concludes the proof.
\end{proof}

Both the complete bipartite graph $K_{n,n}$ and the $n$-dimensional hypercube $\mathcal{Q}_{n}$ are $0$-Ricci-flat, regular graphs of girth four. Note that there exists a perfect matching between $S_{1}(x)\setminus B_{1}(y)$ and $S_{1}(y)\setminus B_{1}(x)$ for any edge $x \sim y$ in the complete bipartite graph $K_{n,n}$. The same holds for the $n$-dimensional hypercube $\mathcal{Q}_{n}$. Therefore, according to the previous theorem, neither of the graphs is bone-idle. Using \thref{Lin_Lu_Yau_curvature} for the Lin-Lu-Yau curvature, we obtain that for both the complete bipartite graph $K_{n,n}$ and the $n$-dimensional hypercube $\mathcal{Q}_{n}$
\begin{equation*}
    \kappa(x,y) = \frac{2}{n},
\end{equation*}
for every edge $x \sim y$. Thus, the graphs $K_{n,n}$ and $\mathcal{Q}_{n}$ satisfy $\kappa(x,y) >0$ and $\kappa_{0}(x,y)=0$ for all edges $x \sim y$.

A direct implication of \thref{bone_idle} is that the $n$-dimensional integer lattice $\mathbb{Z}^{n}$ is bone-idle.
 
We conclude this section by extending \thref{bone_idle} to arbitrary regular graphs as follows.

\begin{theorem}
    Let $G=(V,E)$ be a locally finite graph. Let $x,y \in V$ be of equal degree $d$ with $x \sim y$. Then the edge $x \sim y$ is bone-idle if and only if there exists an optimal assignment $\phi \in \mathcal{O}_{xy}$ such that $N_{1}(\phi) + N_{2}(\phi) < d - 1 - \vert N_{xy} \vert$ and 
    \begin{equation*}
        2d - 4 - 3 \vert N_{xy}\vert =  2 N_{1}(\phi) + N_{2}(\phi).
    \end{equation*} 
\end{theorem}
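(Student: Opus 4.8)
The plan is to reduce the statement to the two scalar identities $\kappa(x,y) = 0$ and $\kappa_{0}(x,y) = \kappa(x,y)$, each of which I can translate into a condition on $N_{1}(\phi)$ and $N_{2}(\phi)$ for an optimal assignment $\phi \in \mathcal{O}_{xy}$. By \thref{bone-idle_vs_ricci-flat}, the edge $x \sim y$ is bone-idle if and only if $\kappa(x,y) = \kappa_{0}(x,y) = 0$, so it suffices to characterize when both identities hold simultaneously, and the whole argument is then a matter of assembling two translations.

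First I would set up the combinatorics of an optimal assignment. Since $x \sim y$, the set $S_{1}(x) \cap B_{1}(y)$ consists of $y$ together with the common neighbors $N_{xy}$, so the domain $S_{1}(x) \setminus B_{1}(y)$ has exactly $d - 1 - \vert N_{xy} \vert$ elements. Every such $z$ satisfies $1 \leq d(z, \phi(z)) \leq 3$, so writing $N_{3}(\phi)$ for the number of $z$ with $d(z,\phi(z)) = 3$, I have $N_{1}(\phi) + N_{2}(\phi) + N_{3}(\phi) = d - 1 - \vert N_{xy} \vert$. Substituting $N_{3}(\phi) = (d-1-\vert N_{xy} \vert) - N_{1}(\phi) - N_{2}(\phi)$ into $\sum_{z} d(z,\phi(z)) = N_{1}(\phi) + 2N_{2}(\phi) + 3N_{3}(\phi)$ and feeding the result into the formula of \thref{Lin_Lu_Yau_curvature} gives
\[
    \kappa(x,y) = \frac{1}{d}\bigl(-2d + 4 + 2N_{1}(\phi) + N_{2}(\phi) + 3\vert N_{xy} \vert\bigr),
\]
so that $\kappa(x,y) = 0$ is equivalent to the displayed identity $2d - 4 - 3\vert N_{xy} \vert = 2N_{1}(\phi) + N_{2}(\phi)$.

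Next I would identify the inequality with the coincidence of the two curvatures. The condition $N_{1}(\phi) + N_{2}(\phi) < d - 1 - \vert N_{xy} \vert$ is exactly $N_{3}(\phi) > 0$, i.e., the existence of some $z \in S_{1}(x) \setminus B_{1}(y)$ with $d(z, \phi(z)) = 3$; by \thref{equality_condition} this is equivalent, for an optimal $\phi$, to $\kappa_{0}(x,y) = \kappa(x,y)$. With these two translations in hand the theorem follows by combining them. For the forward direction, bone-idleness gives $\kappa = \kappa_{0}$, so \thref{equality_condition} furnishes an optimal $\phi \in \mathcal{O}_{xy}$ with a distance-$3$ pair (hence the inequality), and since this $\phi$ is optimal the computation above together with $\kappa(x,y)=0$ yields the equality. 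Conversely, the equality forces $\kappa(x,y) = 0$ via the computation, while the inequality produces a distance-$3$ pair and hence $\kappa_{0} = \kappa = 0$ through \thref{equality_condition}, so the edge is bone-idle.

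The only point requiring care is ensuring that a single assignment $\phi$ witnesses both conditions at once: the forward direction must use the specific optimal assignment supplied by \thref{equality_condition} and verify that the curvature identity applies to it precisely because it lies in $\mathcal{O}_{xy}$. Beyond this bookkeeping the argument is a direct generalization of the girth-four case \thref{bone_idle}, the sole new ingredient being the reintroduction of $\vert N_{xy} \vert$ into the cardinality of $S_{1}(x) \setminus B_{1}(y)$ and into the curvature formula; I expect no genuine obstacle.
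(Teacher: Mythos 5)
Your proposal is correct and follows essentially the same route as the paper's proof: reduce bone-idleness to $\kappa(x,y)=\kappa_{0}(x,y)=0$ via \thref{bone-idle_vs_ricci-flat}, translate $\kappa(x,y)=0$ into the identity $2d-4-3\vert N_{xy}\vert = 2N_{1}(\phi)+N_{2}(\phi)$ through \thref{Lin_Lu_Yau_curvature}, and identify the strict inequality with the existence of a distance-$3$ pair, which \thref{equality_condition} equates with $\kappa_{0}(x,y)=\kappa(x,y)$. Your explicit bookkeeping with $N_{3}(\phi)$ and your care that a single optimal $\phi$ (the one supplied by \thref{equality_condition}) witnesses both conditions are exactly the steps the paper carries out, only stated slightly more explicitly.
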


\begin{proof}
    "$\implies$" Assume the edge $x \sim y$ is bone-idle, i.e., $\kappa(x,y) = \kappa_{0}(x,y) = 0$. According to \thref{equality_condition} there exists an optimal assignment $\phi \in \mathcal{O}_{xy}$ between $S_{1}(x)\setminus B_{1}(y)$ and $S_{1}(y)\setminus B_{1}(x)$ and a $z \in S_{1}(x)\setminus B_{1}(y)$, such that $d(z,\phi(z)) = 3$. Therefore, 
    \begin{equation}
        N_{1}(\phi) + N_{2}(\phi) < \vert S_{1}(x)\setminus B_{1}(y)\vert = d - 1 - \vert N_{xy} \vert .
    \end{equation}
    Using $\kappa(x,y) = 0$ and \thref{Lin_Lu_Yau_curvature}, we obtain
    \begin{align*}
        \kappa(x,y) &= \frac{1}{d}\Biggl(d+1 -\mathlarger{\sum}_{z \in S_{1}(x) \setminus B_{1}(y)}d(z, \phi(z)) \Biggr) \\
        &= \frac{1}{d}\Biggl(-2d + 4 + 3 \vert N_{xy} \vert + 2 N_{1}(\phi) + N_{2}(\phi)\Biggr) \\
        &= 0,
    \end{align*}
    or equivalently, $2N_{1}(\phi) + N_{2}(\phi) = 2d - 4 - 3 \vert N_{xy} \vert$.

    "$\impliedby$" Assume $\phi$ is an optimal assignment such that 
    \begin{equation*}
        N_{1}(\phi) + N_{2}(\phi) < d - 1 - \vert N_{xy} \vert = \vert S_{1}(x) \setminus B_{1}(y) \vert.
    \end{equation*} 
    Therefore, there exists an $z\in S_{1}(x) \setminus B_{1}(y)$ such that $d(z, \phi(z)) = 3$. Hence, according to \thref{equality_condition}, we have $\kappa(x,y) = \kappa_{0}(x,y)$. Using \thref{Lin_Lu_Yau_curvature} and 
    \begin{equation*}
        2d - 4 - 3 \vert N_{xy}\vert =  + 2 N_{1}(\phi) + N_{2}(\phi),
    \end{equation*}
    we obtain $\kappa(x,y) = 0$, and the edge $x\sim y$ is bone-idle.
\end{proof}

\subsection{3-regular bone-idle graphs}

The objective of this section is to demonstrate that no 3-regular bone-idle graphs exist.

\begin{theoremBoneIdle}
    Let $G=(V,E)$ be a locally finite graph. Suppose that $G$ is bone-idle, then $G$ is not $3$-regular.
\end{theoremBoneIdle}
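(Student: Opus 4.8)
\emph{Overview and reduction of the girth.} The plan is to first pin the girth to exactly four and then to show that the rigid local structure forced by bone-idleness around a single $4$-cycle cannot be completed in a $3$-regular graph. I would begin by ruling out every other girth. If $G$ contained a triangle, some edge $x\sim y$ would satisfy $|N_{xy}|\ge 1$; applying the preceding characterization of bone-idle edges in regular graphs to this edge (with $d=3$) gives $2N_{1}(\phi)+N_{2}(\phi)=2d-4-3|N_{xy}|=2-3|N_{xy}|\ge 0$, forcing $|N_{xy}|=0$, a contradiction. Hence $G$ is triangle-free, its girth is at least four, and $N_{xy}=\emptyset$ for every edge. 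If the girth were at least five, \thref{bone_idle_girth_5} would force $G$ to be an infinite path or a cycle $C_{n}$, neither of which is $3$-regular. Thus $G$ has girth exactly four.

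\emph{Local setup and four distance-three relations.} Fix an edge $x\sim y$. By bone-idleness and \thref{bone_idle} there is an optimal assignment with $N_{1}=d-2=1$ and $N_{2}=0$; its unique distance-one pair yields a $4$-cycle $x\sim a\sim b\sim y\sim x$. Write $a',b',a'',b''$ for the remaining neighbours of $x,y,a,b$, respectively. Applying the same reasoning to each of the four cycle edges, the bijection pairing the opposite cycle-edge (distance one) with the two pendant vertices has total length at least $d+1=4$ by \thref{Lin_Lu_Yau_curvature} and $\kappa=0$, so the pendant pair sits at distance exactly three. This gives $d(a',b')=d(a'',b'')=d(a',a'')=d(b',b'')=3$.

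\emph{The structural collapse (the main obstacle).} The crux is to prove $a'=b''$ and $b'=a''$. I would analyse the bone-idle edge $x\sim a'$: its $4$-cycle must join a neighbour of $x$ (one of $y,a$) to a neighbour of $a'$. Using the relations $d(a',a'')=3$ and $d(a',b')=3$, every bridging neighbour of $a'$ other than $b$ is excluded, since it would place $a'$ at distance one from $a''$ or from $b'$; hence the bridge passes through $b$, giving $a'\sim b$ and so $a'=b''$. A symmetric analysis of $y\sim b'$ gives $b'=a''$. Verifying that every alternative $4$-cycle through $x\sim a'$ is incompatible with the distance-three relations is the delicate point, and it is precisely here that $3$-regularity---leaving exactly two pendant vertices on each side---is indispensable.

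\emph{Closing the contradiction.} After the collapse, $a'$ has neighbours $\{x,b,u\}$ and $b$ has neighbours $\{a,y,a'\}$, with $u$ a further vertex. Re-examining $x\sim a'$, now with pendant sets $\{y,a\}$ and $\{b,u\}$: since $y\sim b$ and $a\sim b$, the two bijections have lengths $1+d(a,u)$ and $1+d(y,u)$, both at least $4$, so $d(a,u)=d(y,u)=3$. Finally I would consider the bone-idle edge $a'\sim u$, whose pendant sets are $\{x,b\}$ and the two remaining neighbours $p,q$ of $u$. Every neighbour of $x$ and of $b$ lies in $\{y,a,a'\}$, while $d(a,u)=d(y,u)=3$ forbids $p,q\in\{a,y\}$, and $p,q\neq a'$; hence no pendant of $a'$ is adjacent to any pendant of $u$, so $N_{1}(\phi)=0$ for every assignment of $a'\sim u$. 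By \thref{bone_idle} this edge is then not bone-idle, contradicting the bone-idleness of $G$. Therefore no $3$-regular bone-idle graph exists.
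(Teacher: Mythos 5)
Your proof is correct, and although it opens exactly as the paper does, its endgame follows a genuinely different route. The common part: both arguments reduce to girth exactly four using \thref{bone_idle_girth_5}, and your exclusion of triangles via the unlabelled extension of \thref{bone_idle} (the identity $2N_{1}(\phi)+N_{2}(\phi)=2d-4-3\vert N_{xy}\vert$, which is negative for $d=3$ and $\vert N_{xy}\vert\geq 1$) is interchangeable with the paper's direct evaluation $\kappa(x,y)=\frac{1}{3}\bigl(4-d(z_{1},z_{2})\bigr)>0$ on singleton pendant sets. From there the paper takes a much shorter path: with $x_{1}\sim y_{1}$ the cycle chord and $x_{2}$ the remaining neighbour of $x$, chosen without loss of generality with $x_{2}\not\sim y_{1}$, it notes that the $4$-cycle forced by \thref{bone_idle} on the edge $x\sim x_{2}$ must bridge through $y$ or through $x_{1}$, and either case produces a perfect matching between the pendant sets of a nearby edge ($x\sim y$ or $x\sim x_{1}$), giving curvature $\frac{2}{3}>0$ there --- a contradiction with Ricci-flatness resolved in two quick cases inside the second neighbourhood of $x$. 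You instead exploit the exact equality $\inf_{\phi}\sum_{z}d(z,\phi(z))=d+1=4$ from \thref{Lin_Lu_Yau_curvature} to extract the four distance-three relations $d(a',b')=d(a'',b'')=d(a',a'')=d(b',b'')=3$, prove the rigidity collapse $a'=b''$ and $b'=a''$, and then push one vertex further to exhibit an edge $a'\sim u$ supporting no $4$-cycle at all, so that $N_{1}(\phi)=0$ for every assignment and \thref{bone_idle} fails (equivalently, $\kappa_{0}\neq\kappa$ on that edge). Your route is longer and leaves some routine non-degeneracy checks implicit (e.g.\ that $u\notin\{y,a,b'\}$ and that the pendant sets are as claimed --- all of which do follow from triangle-freeness and the distance relations, so there is no gap), but it buys strictly more than the paper's argument: it pins down the complete local configuration that bone-idleness forces around any $4$-cycle in a cubic graph, whereas the paper's two-case dichotomy only certifies the contradiction; conversely, the paper's proof shows that the distance-three relations and the collapse, while informative, are not needed to rule out $3$-regular bone-idle graphs.
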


\begin{figure}
    \center 
    \begin{subfigure}{0.4\textwidth}
        \centering
        \begin{tikzpicture}[x=1.5cm, y=1.5cm,
            vertex/.style={
                shape=circle, fill=black, inner sep=1.5pt	
            }
        ]
        
        \node[vertex, label=below:$x$] (1) at (0, 0) {};
        \node[vertex, label=below:$y$] (2) at (1, 0) {};
        \node[vertex, label=above:$x_{1}$] (3) at (0, 1) {};
        \node[vertex, label=above: $y_{1}$] (4) at (1, 1) {};
        \node[vertex, label=below:$x_{2}$] (5) at (-1, 0) {};
        \node[vertex, label=above:$z$] (6) at (-1, 1) {};
        \node[vertex, label=above:$y_{2}$] (7) at (2, 0) {};
        
        \draw (1) -- (2);
        \draw (1) -- (5);
        \draw[red] (1) -- node[midway,right] {$\kappa>0$} ++ (3);
        \draw (2) -- (4);
        \draw (3) -- (4);
        \draw (5) -- (6);
        \draw (3) -- (6);   
        \draw (2) -- (7);    
        \end{tikzpicture}
      \end{subfigure}
      \begin{subfigure}{0.4\textwidth}
        \centering
        \begin{tikzpicture}[x=1.5cm, y=1.5cm,
            vertex/.style={
                shape=circle, fill=black, inner sep=1.5pt	
            }
        ]
        
        \node[vertex, label=left:$x$] (1) at (0, 0) {};
        \node[vertex, label=right:$y$] (2) at (1, 0) {};
        \node[vertex, label=above:$x_{1}$] (3) at (0, 1) {};
        \node[vertex, label=above:$y_{1}$] (4) at (1, 1) {};
        \node[vertex, label=below:$x_{2}$] (5) at (0, -1) {};
        \node[vertex, label=below:$y_{2}$] (6) at (1, -1) {};
        
        \draw[red] (1) -- node[midway,below] {$\kappa>0$} ++ (2);
        \draw (1) -- (3);
        \draw (1) -- (5);
        \draw (2) -- (4);
        \draw (2) -- (6);
        \draw (3) -- (4);
        \draw (5) -- (6);

        \end{tikzpicture}
      \end{subfigure}
    \caption{Illustration of the two possible cases in \thref{bone_idle_3_reg}}
\end{figure}

\begin{proof}
    We argue by contradiction. Assume $G$ is $3$-regular and bone-idle. According to \thref{bone_idle_girth_5}, the girth of $G$ must be less than five. Assume there exists an edge $x \sim y$ such that $\vert N_{xy} \vert > 0$. Then $S_{1}(x)\setminus B_{1}(y)$ and 
    $S_{1}(y)\setminus B_{1}(x)$ each contain only a single vertex, which we denote by $z_{1}$ and $z_{2}$, respectively. Recall that $d(z_{1}, z_{2}) \leq 3$. Thus,
    \begin{equation*}
        \kappa(x,y) = \frac{1}{3}\Big(4 - d(z_{1},z_{2})\Big) > 0.
    \end{equation*}
    This contradicts the bone-idleness of the graph. Hence, the girth of $G$ must be equal to four. Let $x\sim y$ be an edge contained in a $4$-cycle. Denote by $x_{1}, x_{2}$ and $y_{1}, y_{2}$ the other two neighbors of $x$ and $y$, respectively. Without loss of generality, we may assume that $x_{1} \sim y_{1}$, as $x \sim y$ is contained in a $4$-cycle. Observe that $x_{2} \sim y_{1}$ and $y_{2} \sim x_{1}$ cannot hold true at the same time. Otherwise, there exists a perfect matching between $S_{1}(x)\setminus\{y\}$ and $S_{1}(y)\setminus\{x\}$, leading to $\kappa(x,y) = \frac{2}{3} > 0$. 
    Therefore, we may assume, without loss of generality, that $x_{2} \not\sim y_{1}$.

    According to \thref{bone_idle}, there must be a $4$-cycle based on the edge $x \sim x_{2}$. Using that $x_{2} \not\sim y_{1}$, one of the following cases must be true:

    \emph{Case 1:} There is a $z \in S_{1}(x_{2})\setminus\{x\}$ such that $z \sim x_{1}$. In this case we have $\kappa(x, x_{1})>0$, contradicting the bone-idleness of $G$. 

    \emph{Case 2:} $x_{2} \sim y_{2}$. In this case, we have $\kappa(x,y) >0$, contradicting the bone-idleness of $G$. 
    
    This concludes the proof.
\end{proof}

Hence, there are no 3-regular bone-idle graphs.

\subsection{4-regular bone-idle graphs}\label{bone_idle_4_reg}

\begin{figure}
    \center 
    \begin{tikzpicture}[x=1.5cm, y=1.5cm,
        vertex/.style={
            shape=circle, fill=black, inner sep=1.5pt   
        }
    ]
    
        \node[vertex] (1) at (-0.8, 0) {};
        \node[vertex] (2) at (0.8, 0) {};
        \node[vertex] (3) at (-1.3, 1.5) {};
        \node[vertex] (4) at (1.3, 1.5) {};
        \node[vertex] (5) at (0, 2.5) {};

        \node[vertex] (6) at (0, 0.25) {};
        \node[vertex] (7) at (0.8, 0.8) {};
        \node[vertex] (8) at (-0.8, 0.8) {};
        \node[vertex] (9) at (-0.55, 1.8) {};
        \node[vertex] (10) at (0.55, 1.8) {};

        \node[vertex] (11) at (0.2, 1.6) {};
        \node[vertex] (12) at (-0.2, 1.6) {};
        \node[vertex] (13) at (-0.4, 1.35) {};
        \node[vertex] (14) at (0.4, 1.35) {};
        \node[vertex] (15) at (-0.5, 1.1) {};
        \node[vertex] (16) at (0.5, 1.1) {};
        \node[vertex] (17) at (-0.4, 0.85) {};
        \node[vertex] (18) at (0.4, 0.85) {};
        \node[vertex] (19) at (0.2, 0.6) {};
        \node[vertex] (20) at (-0.2, 0.6) {};

        \node[vertex] (21) at (0, 1.4) {};
        \node[vertex] (22) at (-0.3, 1.2) {};
        \node[vertex] (23) at (-0.1, 1.25) {};
        \node[vertex] (24) at (0.1, 1.25) {};
        \node[vertex] (25) at (0.3, 1.2) {};
        \node[vertex] (26) at (-0.18, 1.05) {};
        \node[vertex] (27) at (0.18, 1.05) {};
        \node[vertex] (28) at (0, 1) {};
        \node[vertex] (29) at (0.2, 0.8) {};
        \node[vertex] (30) at (-0.2, 0.8) {};
        
        \draw (1) -- (2);
        \draw (1) -- (3);
        \draw (2) -- (4);
        \draw (3) -- (5);
        \draw (4) -- (5);
        \draw (1) -- (6);
        \draw (2) -- (6);
        \draw (2) -- (7);
        \draw (4) -- (7);
        \draw (1) -- (8);
        \draw (3) -- (8);
        \draw (3) -- (9);
        \draw (5) -- (9);
        \draw (5) -- (10);
        \draw (4) -- (10);
        \draw (11) -- (12);
        \draw (12) -- (13);
        \draw (11) -- (14);
        \draw (14) -- (16);
        \draw (13) -- (15);
        \draw (15) -- (17);
        \draw (16) -- (18);
        \draw (18) -- (19);
        \draw (17) -- (20);
        \draw (19) -- (20);
        \draw (19) -- (6);
        \draw (20) -- (6);
        \draw (12) -- (9);
        \draw (13) -- (9);
        \draw (11) -- (10);
        \draw (14) -- (10);
        \draw (15) -- (8);
        \draw (17) -- (8);
        \draw (18) -- (7);
        \draw (16) -- (7);
        \draw (11) -- (21);
        \draw (12) -- (21);
        \draw (13) -- (22);
        \draw (15) -- (22);
        \draw (22) -- (23);
        \draw (23) -- (24);
        \draw (21) -- (23);
        \draw (21) -- (24);
        \draw (24) -- (25);
        \draw (25) -- (16);
        \draw (25) -- (14);
        \draw (22) -- (26);
        \draw (23) -- (26);
        \draw (24) -- (27);
        \draw (25) -- (27);
        \draw (27) -- (28);
        \draw (26) -- (28);
        \draw (28) -- (29);
        \draw (27) -- (29);
        \draw (29) -- (19);
        \draw (30) -- (20);
        \draw (30) -- (28);
        \draw (30) -- (26);
        \draw (29) -- (18);
        \draw (30) -- (17);
        
    \end{tikzpicture}
    \caption{Illustration of the icosidodecahedron graph}
    \label{icosidodecahedron_graph}
\end{figure}
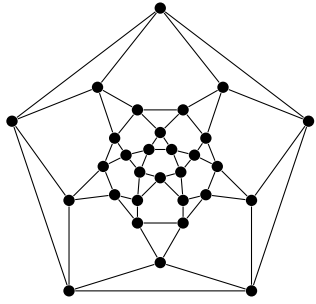

In this section, we aim to provide a complete classification of all 4-regular bone-idle graphs. We begin by examining graphs with a girth of three. In \cite{Bai2021}, the authors classify all Ricci-flat 4-regular graphs of girth three.

\begin{theorem}[\cite{Bai2021}, Theorem 5]
    Let $G=(V,E)$ be a 4-regular graph of girth three. If $G$ is Ricci-flat, then it is isomorphic to the icosidodecahedron graph.
\end{theorem}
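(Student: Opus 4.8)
The plan is to start from the Lin-Lu-Yau formula \thref{Lin_Lu_Yau_curvature}, which for a $4$-regular graph reads $\kappa(x,y) = \frac{1}{4}\bigl(5 - \inf_{\phi\in\mathcal{A}_{xy}}\sum_{z\in S_{1}(x)\setminus B_{1}(y)} d(z,\phi(z))\bigr)$, so that $\kappa(x,y)=0$ is equivalent to the optimal assignment between $S_{1}(x)\setminus B_{1}(y)$ and $S_{1}(y)\setminus B_{1}(x)$ having total cost exactly $5$. First I would classify edges by $t := \vert N_{xy}\vert$. Since $\vert S_{1}(x)\setminus B_{1}(y)\vert = 3 - t$ and every summand is at most $3$, the cases $t = 2$ and $t = 3$ give total cost at most $3$ and $0$ respectively, hence $\kappa(x,y)>0$; thus \emph{every edge of a Ricci-flat $4$-regular graph satisfies $t\in\{0,1\}$}, i.e. each edge lies in at most one triangle. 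As $G$ has girth three, at least one edge has $t=1$.

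For a $t=1$ edge $x\sim y$ with common neighbour $w$, write $S_{1}(x)\setminus B_{1}(y)=\{a_{1},a_{2}\}$ and $S_{1}(y)\setminus B_{1}(x)=\{b_{1},b_{2}\}$. If some $a_{i}\sim b_{j}$, the matching cost drops to at most $1+3=4<5$, so the zero-curvature condition forces $d(a_{i},b_{j})\ge 2$ for all $i,j$, and a short case analysis of the $2\times 2$ distance matrix (entries in $\{2,3\}$, minimum matching equal to $5$) shows the optimal assignment pairs one couple at distance two and one at distance three. A distance-two pair $a_{1}-c-b_{1}$ produces the $5$-cycle $x\,a_{1}\,c\,b_{1}\,y$. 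Hence \emph{each triangle edge also lies in a pentagon}, and around each vertex the four incident edges pair into two triangles whose complementary edges feed pentagons — precisely the local incidence pattern $(T,P,T,P)$ of the icosidodecahedron. The parallel analysis of the $t=0$ case (three vertices matched at total cost five, in the spirit of \thref{ricci_flat_condition}) is then used to \emph{exclude triangle-free edges}: propagating the forced pentagon should show every edge lies in a triangle, so the triangles form a Krausz partition of $E(G)$ with each vertex in exactly two triangles.

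This identifies $G$ as the line graph $L(H)$ of a $3$-regular graph $H$ (by the theorems of Krausz and Whitney), the triangles of $G$ corresponding to vertices of $H$ and the pentagons of $G$ to $5$-cycles of $H$. I would then transfer the curvature condition: the distance-two/distance-three dichotomy above should translate into $H$ having girth five and being Ricci-flat, so by the classification \thref{ricci_flat} the only $3$-regular candidates for $H$ are the dodecahedral, Petersen, half-dodecahedral and Triplex graphs. Testing the four line graphs against $4$-regularity, girth three and the total-cost-five condition leaves only $H$ equal to the dodecahedron, whose line graph is the icosidodecahedron, giving $G\cong$ icosidodecahedron.

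The main obstacle is the passage from local to global structure. The LLY reduction and the distance dichotomy are clean, but ruling out $t=0$ edges (and thereby securing the Krausz partition) is delicate, and the transfer of the zero-curvature condition from $G=L(H)$ to a usable condition on $H$ is the crux: one must show precisely that the pentagon and distance-three data on $G$ are equivalent to Ricci-flatness and girth five on $H$, so that \thref{ricci_flat} can be invoked. An alternative to the line-graph detour is a direct combinatorial reconstruction, building outward from a single triangle–pentagon flag and verifying that the configuration closes up consistently to exactly $30$ vertices; there the difficulty shifts to establishing finiteness and uniqueness, i.e. excluding any infinite or alternative assembly of triangles and pentagons compatible with all the curvature constraints.
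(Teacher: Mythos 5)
You should first note that the paper does not prove this statement at all: it is imported verbatim from \cite{Bai2021} (Theorem 5 there), where the argument is a long, direct case analysis of local configurations, not a line-graph reduction, so your proposal must stand on its own. Your opening reductions are sound: for $d=4$, $t=\vert N_{xy}\vert\in\{2,3\}$ forces matching cost at most $3$ and hence $\kappa(x,y)\geq \frac{1}{2}$; for a $t=1$ edge all four distances $d(a_{i},b_{j})$ must be at least $2$, and minimum matching cost exactly $5$ forces a $2{+}3$ split, producing the pentagon. The first genuine gap is the exclusion of $t=0$ edges. By the paper's own \thref{ricci_flat_condition} with $d=4$, a $t=0$ edge is Ricci-flat precisely when $(N_{1},N_{2})=(2,0)$ or $(1,2)$, and nothing in your dichotomy forbids a Ricci-flat graph in which triangle edges coexist with such triangle-free edges. ``Propagating the forced pentagon should show every edge lies in a triangle'' is exactly the hard global step --- in \cite{Bai2021} this kind of propagation is the bulk of the proof --- and without it you obtain neither ``every edge lies in exactly one triangle,'' nor the two-triangles-per-vertex count, nor the Krausz partition, nor $G\cong L(H)$.

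The second gap is the transfer of the curvature condition from $G=L(H)$ to $H$, which you correctly identify as the crux but do not resolve. An edge of $L(H)$ joins two edges $va$ and $vb$ of $H$ meeting at a vertex $v$, so its curvature constrains the geometry of the \emph{wedge} $a$--$v$--$b$, not of an edge of $H$. For cubic $H$ of girth five, $\kappa^{H}(a,v)=0$ requires the two neighbors of $a$ outside $B_{1}(v)$ to be matched to those of $v$ at total cost $4$ (two $5$-cycles through the edge $av$ on disjoint pairs), whereas your condition on $G$ yields, for each wedge, one distance-two and one distance-three pair among the edges at the two endpoints; these are different quantifications over different objects, and no implication between them is argued. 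Moreover, curvature does not pass through the line-graph construction in general: the Petersen graph is Ricci-flat (by \thref{ricci_flat}), yet its line graph --- $4$-regular of girth three --- cannot be if the theorem is true, so any transfer lemma is direction-specific and must be proved rather than presumed; the final step ``testing the four line graphs leaves only the dodecahedron'' is likewise asserted without the finite verification. In short, the skeleton is plausible and the local reductions are correct, but the two steps you flag with ``should show'' are precisely the mathematical content of the theorem, so as written this is an outline rather than a proof, and it follows a route (via Krausz/Whitney and the girth-five classification) that is genuinely different from the case-analytic proof in \cite{Bai2021}.
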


\begin{remark}
    See Figure \ref{icosidodecahedron_graph} for an illustration of the icosidodecahedron graph, a polyhedron with 20 triangular faces, 12 pentagonal faces on 30 vertices connected by 60 identical edges, each of which separates a triangle from a pentagon.
\end{remark}

\begin{corollary}
    Let $G=(V,E)$ be a 4-regular graph of girth three. If $G$ is bone-idle, then it is isomorphic to the icosidodecahedron graph.
\end{corollary}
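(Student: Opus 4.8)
The plan is to obtain the statement as an immediate consequence of two facts already available to us: the equivalence between bone-idleness and the conjunction of Ricci-flatness and $0$-Ricci-flatness, and the cited classification of $4$-regular Ricci-flat graphs of girth three. The crucial observation is that bone-idleness is a strictly stronger hypothesis than Ricci-flatness, so the external classification applies verbatim once we have stripped the hypothesis down appropriately.

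Concretely, I would first invoke \thref{bone-idle_vs_ricci-flat}: since $G$ is bone-idle, every edge $x \sim y$ satisfies $\kappa_{\alpha}(x,y) = 0$ for all $\alpha \in [0,1]$, and in particular $\kappa(x,y) = 0$. Hence $G$ is Ricci-flat. (One may equally appeal to the remark that a graph is bone-idle if and only if it is Ricci-flat and $0$-Ricci-flat, and then simply discard the $0$-Ricci-flat half.) Now $G$ is a $4$-regular, Ricci-flat graph of girth three, so by the preceding cited theorem of \cite{Bai2021} it must be isomorphic to the icosidodecahedron graph, which is exactly the assertion.

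There is essentially no obstacle in this argument: the deep input is the externally cited classification, and the only step we contribute is the elementary implication that bone-idleness forces Ricci-flatness. I would note, however, that the corollary as stated gives only the necessary direction and does not assert that the icosidodecahedron is itself bone-idle. To upgrade the result to a genuine characterization, one would additionally have to verify that the icosidodecahedron is $0$-Ricci-flat, for instance by exhibiting, for every edge $x \sim y$, an optimal assignment $\phi \in \mathcal{O}_{xy}$ with $N_{1}(\phi) + N_{2}(\phi) < d - 1 - \vert N_{xy} \vert$ satisfying the bone-idleness criterion of the preceding theorem. Since this verification is not needed for the implication actually claimed, I would present the corollary as a short, direct consequence of Ricci-flatness and leave the converse check aside.
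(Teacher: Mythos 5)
Your proposal is correct and takes essentially the same route as the paper: bone-idleness implies Ricci-flatness (via \thref{bone-idle_vs_ricci-flat}), and the classification of $4$-regular Ricci-flat graphs of girth three from \cite{Bai2021} then forces $G$ to be the icosidodecahedron. The only difference is that the paper's proof additionally verifies the converse, namely that the icosidodecahedron is itself bone-idle (exhibiting for each edge an optimal assignment with a distance-$3$ pair and invoking \thref{equality_condition}), a check you correctly identify as not required for the implication as literally stated.
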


\begin{proof}
    Assume $G$ is a 4-regular bone-idle graph of girth three. Since it is bone-idle, $G$ is Ricci-flat and therefore must be isomorphic to the icosidodecahedron graph. It remains to verify that the the icosidodecahedron graph $G$ is indeed bone-idle. To this end, let $x\sim y$ be an arbitrary edge in $G$. Let $\phi \in \mathcal{O}_{xy}$ be an optimal assignment between $S_{1}(x)\setminus B_{1}(y)$ and $S_{1}(y)\setminus B_{1}(x)$. Then there exists an $z \in S_{1}(x)\setminus B_{1}(y)$ such that $d(z,\phi(z)) = 3$. According to \thref{equality_condition}, we have $\kappa(x,y) = \kappa_{0}(x,y)$. Using the Ricci-flatness we have $\kappa(x,y)=0$. Using \thref{bone-idle_vs_ricci-flat}, we conclude that the edge $x\sim y$ is bone-idle. 
\end{proof}

Therefore, we have classified all 4-regular bone-idle graphs of girth three. We now proceed to classify all such graphs of girth four. 

\begin{figure}
    \centering
   
    \begin{tikzpicture}[x=1.5cm, y=1.5cm,
            vertex/.style={
                shape=circle, fill=black, inner sep=1.5pt	
            }
        ]
        \node[vertex] (1) at (0, 0) {};
        \node[vertex] (2) at (1, 0) {};
        \node[vertex] (3) at (2, 0) {};
        \node[vertex] (4) at (3, 0) {};
        \node[vertex] (5) at (4, 0) {};
        \node[vertex] (6) at (5, 0) {};
        \node[vertex] (7) at (6, 0) {};

        \node[vertex] (8) at (0, 1) {};
        \node[vertex] (9) at (1, 1) {};
        \node[vertex] (10) at (2, 1) {};
        \node[vertex] (11) at (3, 1) {};
        \node[vertex] (12) at (4, 1) {};
        \node[vertex] (13) at (5, 1) {};
        \node[vertex] (14) at (6, 1) {};

        \draw (1) -- (2);
        \draw (2) -- (3);
        \draw (3) -- (4);
        \draw (4) -- (5);
        \draw (5) -- (6);
        \draw (6) -- (7);

        \draw (8) -- (9);
        \draw (9) -- (10);
        \draw (10) -- (11);
        \draw (11) -- (12);
        \draw (12) -- (13);
        \draw (13) -- (14);

        \draw (1) -- (9);
        \draw (2) -- (8);
        \draw (2) -- (10);
        \draw (3) -- (9);
        \draw (3) -- (11);
        \draw (4) -- (10);
        \draw (4) -- (12);
        \draw (5) -- (11);
        \draw (5) -- (13);
        \draw (6) -- (12);
        \draw (6) -- (14);
        \draw (7) -- (13);

        \draw[dashed] (1) -- (-1,0);
        \draw[dashed] (1) -- (-1,1);
        \draw[dashed] (8) -- (-1,1);
        \draw[dashed] (8) -- (-1,0);

        \draw[dashed] (7) -- (7,0);
        \draw[dashed] (7) -- (7,1);
        \draw[dashed] (14) -- (7,1);
        \draw[dashed] (14) -- (7,0);
    \end{tikzpicture}
    \caption{A primitive 4-regular Ricci-flat graph}
    \label{primitive_one}
\end{figure}
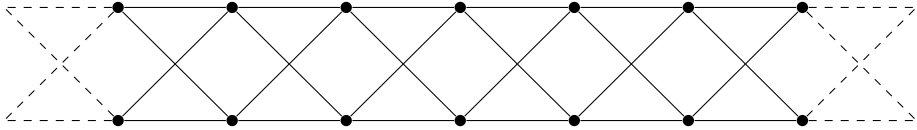

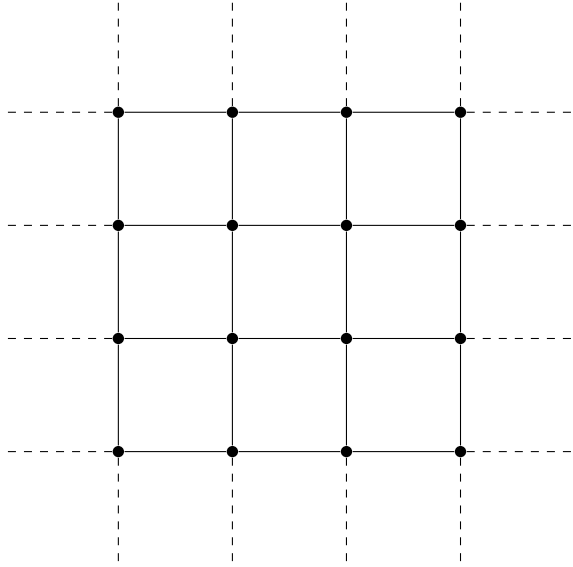
\begin{figure}
    \centering
   
    \begin{tikzpicture}[x=1.5cm, y=1.5cm,
            vertex/.style={
                shape=circle, fill=black, inner sep=1.5pt	
            }
        ]
        \node[vertex] (1) at (0, 0) {};
        \node[vertex] (2) at (1, 0) {};
        \node[vertex] (3) at (2, 0) {};
        \node[vertex] (4) at (3, 0) {};

        \node[vertex] (6) at (0, 1) {};
        \node[vertex] (7) at (1, 1) {};
        \node[vertex] (8) at (2, 1) {};
        \node[vertex] (9) at (3, 1) {};
        
        \node[vertex] (11) at (0, 2) {};
        \node[vertex] (12) at (1, 2) {};
        \node[vertex] (13) at (2, 2) {};
        \node[vertex] (14) at (3, 2) {};

        \node[vertex] (16) at (0, 3) {};
        \node[vertex] (17) at (1, 3) {};
        \node[vertex] (18) at (2, 3) {};
        \node[vertex] (19) at (3, 3) {};


        \draw (1) -- (2);
        \draw (2) -- (3);
        \draw (3) -- (4);

        \draw (6) -- (7);
        \draw (7) -- (8);
        \draw (8) -- (9);

        \draw (1) -- (6);
        \draw (2) -- (7);
        \draw (3) -- (8);
        \draw (4) -- (9);

        \draw (11) -- (12);
        \draw (12) -- (13);
        \draw (13) -- (14);

        \draw (11) -- (6);
        \draw (12) -- (7);
        \draw (13) -- (8);
        \draw (14) -- (9);

        \draw (16) -- (17);
        \draw (17) -- (18);
        \draw (18) -- (19);

        \draw (11) -- (16);
        \draw (12) -- (17);
        \draw (13) -- (18);
        \draw (14) -- (19);



        \draw[dashed] (1) -- (-1,0);
        \draw[dashed] (4) -- (4,0);
        \draw[dashed] (6) -- (-1,1);
        \draw[dashed] (9) -- (4,1);
        \draw[dashed] (11) -- (-1,2);
        \draw[dashed] (14) -- (4,2);
        \draw[dashed] (16) -- (-1,3);
        \draw[dashed] (19) -- (4,3);

        \draw[dashed] (1) -- (0,-1);
        \draw[dashed] (2) -- (1,-1);
        \draw[dashed] (3) -- (2,-1);
        \draw[dashed] (4) -- (3,-1);

        \draw[dashed] (16) -- (0,4);
        \draw[dashed] (17) -- (1,4);
        \draw[dashed] (18) -- (2,4);
        \draw[dashed] (19) -- (3,4);

    \end{tikzpicture}
    \caption{A primitive 4-regular Ricci-flat graph of "lattice type"}
    \label{primitive_two}
\end{figure}

In \cite{Bai2021}, the authors classify all 4-regular Ricci-flat graphs that contain two four-cycles sharing a common edge. To this end, they introduce the concept of a \textit{primitive graph}, which can be understood as the 1-skeleton of the universal cover of the CW-complex formed by gluing 2-cells to all cycles of length at most five. They obtain the following result.

\begin{theorem}[\cite{Bai2021}, Theorem 8]
    Let $G=(V,E)$ be a $4$-regular Ricci-flat  graph that contains two four-cycles sharing one edge. Then G is isomorphic to graphs with the primitive graphs showing in Figure \ref{primitive_one} and Figure \ref{primitive_two}.
\end{theorem}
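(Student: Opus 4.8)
The plan is to reduce the classification to a local propagation argument driven by the Ricci-flat structure theorem for girth-four regular graphs (\thref{ricci_flat_condition}), and then to realize the two surviving patterns as the primitive graphs of Figure \ref{primitive_one} and Figure \ref{primitive_two}. First I would fix the shared edge $x \sim y$ of the two four-cycles. Since a four-cycle through $x \sim y$ forces girth at most four and the girth-three case is already settled (the graph would be the icosidodecahedron), I may work in the girth-four regime, so $N_{xy} = \emptyset$ and $\vert S_{1}(x)\setminus B_{1}(y)\vert = \vert S_{1}(y)\setminus B_{1}(x)\vert = 3$. The two four-cycles supply two neighbors of $x$ matched to neighbors of $y$ at distance one, hence $N_{1}(\phi) \geq 2$ for an optimal assignment $\phi \in \mathcal{O}_{xy}$. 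Comparing with the two admissible profiles in \thref{ricci_flat_condition}, only Case $(i)$ is compatible, so $N_{1}(\phi) = 2$, $N_{2}(\phi) = 0$, and the remaining neighbor of $x$ lies at distance three from the remaining neighbor of $y$. This rigid profile --- two distance-one pairs and exactly one distance-three pair, with no distance-two pair --- is the local picture I would attach to every edge.

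Next I would pass to the primitive graph, that is, the 1-skeleton of the universal cover of the CW-complex obtained by filling every cycle of length at most five with a 2-cell. Working in this simply-connected model removes the ambiguity of global identifications: any short cycle of $G$ is either killed by a 2-cell or lifts faithfully, so the local profile above holds verbatim at every edge of the primitive graph and no unexpected short cycle can appear through a global coincidence. The heart of the argument is then an inductive propagation. Starting from the configuration at $x \sim y$, I would apply the distance-one/distance-three profile at each newly exposed edge to determine the adjacencies one layer further out, repeatedly using that a distance-one pair forces a four-cycle based at that edge while the absence of distance-two pairs forbids any five-cycle based there. Each step either extends the current pattern uniquely or branches, and I would carefully track every branching.

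Finally I would show the branchings collapse to exactly two globally consistent patterns. The bifurcation occurs when deciding how the distance-three partner attaches to the next four-cycle: one choice propagates the four-cycles along a single periodic strip, yielding the ladder-type primitive graph of Figure \ref{primitive_one}, while the other propagates them in two independent directions and forces the square-lattice pattern of Figure \ref{primitive_two}. At each stage every other continuation must be excluded by checking that it violates $4$-regularity, creates a forbidden triangle, or produces an edge whose optimal assignment profile contradicts \thref{ricci_flat_condition}. The main obstacle I anticipate is precisely this case analysis: the propagation must be carried far enough to expose the period, and one must verify that no admissible local move ever reintroduces a distance-two pair or a short cycle that would break the profile at an already-constructed edge. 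Controlling these consistency conditions across the whole primitive graph, rather than at a single edge, is where the real work lies.
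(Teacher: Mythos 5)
The paper itself offers no proof of this statement: it is imported verbatim from Bai, Lu, and Yau \cite{Bai2021}, so there is no internal argument to compare yours against, and I can only judge your outline on its own terms; in spirit it does reconstruct the strategy of the cited source (a local curvature profile at an edge lying in two $4$-cycles, outward propagation, packaging via primitive graphs). Your opening reduction is sound modulo two small checks you omit: that the icosidodecahedron --- the unique $4$-regular Ricci-flat graph of girth three --- contains no two $4$-cycles sharing exactly one edge (otherwise the reduction to girth four is incomplete), and that the two $4$-cycles through $x\sim y$ use distinct neighbors of $x$ and distinct neighbors of $y$ (true when exactly one edge is shared, but your ``$N_{1}(\phi)\geq 2$'' silently assumes it; if the cycles shared a second edge $x\sim x_{1}$ you would only get $N_{1}\geq 1$). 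Granting these, your computation at the distinguished edge is correct: since $\kappa(x,y)=0$ forces optimal cost $d+1=5$, the assignment induced by the two $4$-cycles has cost at most $1+1+3=5$, hence is optimal, with $N_{1}=2$, $N_{2}=0$ and the remaining pair at distance exactly $3$.

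The genuine gap is the sentence in which you attach this rigid profile ``to every edge.'' By \thref{ricci_flat_condition}, Ricci-flatness at an edge with $N_{xy}=\emptyset$ admits two profiles: $(i)$ $N_{1}=2$, $N_{2}=0$, and $(ii)$ $N_{1}=1$, $N_{2}=2$. Your hypothesis pins down profile $(i)$ only at the single distinguished edge; nothing in your argument excludes edges of type $(ii)$ --- one $4$-cycle and two $5$-cycles --- from appearing as you grow the configuration, and eliminating them edge by edge is precisely the content of the theorem: it is the long combinatorial case analysis carried out in \cite{Bai2021}, which your plan defers wholesale (``I would carefully track every branching''). Pentagonal local structure genuinely occurs in Ricci-flat graphs of degree at most four (the icosidodecahedron, the half-dodecahedron), so type $(ii)$ cannot be dismissed on general grounds; it must be killed by its interaction with the adjacent $4$-cycles, and that is where the proof actually lives. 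A second, softer defect: the curvature hypothesis lives on $G$, not on the universal cover, and distances (hence optimal-assignment profiles) need not transfer verbatim to the cover --- a distance-$3$ pair in $G$ may be farther apart upstairs --- so the propagation must be run inside $G$ itself, with the primitive graph serving only to package the conclusion that $G$ is a quotient of one of the two patterns; your claim that the profile ``holds verbatim at every edge of the primitive graph'' reverses this logic. As written, the proposal is a plausible road map whose decisive steps are unexecuted, and it cannot be credited as a proof.
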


According to \thref{bone_idle}, there must be two four-cycles supported on every edge in a 4-regular bone-idle graph of girth four. Thus, we can apply the previous theorem. Using \thref{bone_idle}, it is easy to verify that the graphs with the primitive graphs showing in Figure \ref{primitive_one} and Figure \ref{primitive_two} are bone-idle. Therefore, we obtain the following Corollary.

\begin{corollary}
    Let $G=(V,E)$ be a 4-regular bone-idle graph of girth four. Then $G$ is isomorphic to graphs with the primitive graphs showing in Figure \ref{primitive_one} and Figure \ref{primitive_two}.
\end{corollary}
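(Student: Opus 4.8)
The plan is to derive the hypothesis of Bai's classification (\cite[Theorem 8]{Bai2021}) from bone-idleness, apply that classification, and then verify the converse. First I observe that since $G$ has girth four it contains no triangles, so every edge $x \sim y$ satisfies $N_{xy} = \emptyset$; in particular $S_{1}(x)\setminus B_{1}(y) = S_{1}(x)\setminus\{y\}$ has exactly $d-1 = 3$ elements, so \thref{bone_idle} applies verbatim to every edge of $G$.

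Next I would extract two four-cycles from each edge. Since $G$ is bone-idle it is in particular Ricci-flat by \thref{bone-idle_vs_ricci-flat}, and every edge $x \sim y$ is itself bone-idle. Applying \thref{bone_idle} with $d = 4$ yields an optimal assignment $\phi \in \mathcal{O}_{xy}$ with $N_{1}(\phi) = 2$ and $N_{2}(\phi) = 0$. The equality $N_{1}(\phi) = 2$ means that two distinct neighbours $z_{1}, z_{2} \in S_{1}(x)\setminus\{y\}$ satisfy $z_{i} \sim \phi(z_{i})$ with $\phi(z_{i}) \in S_{1}(y)\setminus\{x\}$, so that $x \sim z_{i} \sim \phi(z_{i}) \sim y \sim x$ is a four-cycle for $i = 1, 2$. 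Because $z_{1} \neq z_{2}$ and $\phi$ is a bijection, these are two genuinely distinct four-cycles, and both contain the edge $x \sim y$.

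Since the preceding holds at an arbitrary edge, $G$ is a $4$-regular Ricci-flat graph containing two four-cycles sharing a common edge, so \cite[Theorem 8]{Bai2021} applies and gives that $G$ is isomorphic to a graph whose primitive graph is one of those in Figure \ref{primitive_one} and Figure \ref{primitive_two}. This establishes the stated inclusion.

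Finally, for a genuine classification I would verify the converse, namely that the two families are themselves bone-idle, so that the inclusion is an equality. Because bone-idleness is a purely local property, detected edge-by-edge through \thref{bone_idle}, and these graphs are highly symmetric with few edge-orbits, it suffices to select one representative edge per orbit, read off its local neighbourhood from the figure, and exhibit an optimal assignment with $N_{1}(\phi) = 2$ and $N_{2}(\phi) = 0$. I expect this converse to be the main obstacle: one must confirm that the exhibited assignment is genuinely \emph{optimal}, i.e.\ that no assignment achieves a strictly smaller total transport cost, which requires checking the pairwise distances inside the local neighbourhood of each edge type in the lattice-like structures of Figures \ref{primitive_one} and \ref{primitive_two}. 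The forward direction, by contrast, is immediate once \thref{bone_idle} supplies the two four-cycles needed to invoke \cite{Bai2021}.
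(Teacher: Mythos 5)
Your proposal is correct and follows essentially the same route as the paper: girth four forces $N_{xy}=\emptyset$, \thref{bone_idle} with $d=4$ yields $N_{1}(\phi)=2$ and hence two distinct four-cycles through every edge, Bai--Lu--Yau's Theorem~8 then applies, and the converse is checked by exhibiting optimal assignments with $N_{1}(\phi)=2$, $N_{2}(\phi)=0$ in the classified families. Your explicit attention to verifying \emph{optimality} of the assignment in the converse direction is a welcome refinement of the paper's ``it is easy to verify,'' but it is the same argument.
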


The finite graphs with the primitive graph depicted in Figure \ref{primitive_one} can be constructed as follows: Start with an $n$-cycle consisting of vertices $x_{0}, \dots, x_{n-1}$ placed inside another $n$-cycle with vertices $y_{0}, \dots, y_{n-1}$. Connect each vertex $y_{k}$ to $x_{(k-1) \mod n}$ and $x_{(k+1) \mod n}$. These graphs are denoted by $BI_{n}$. Figure \ref{bone_idle_graphs} illustrates the graphs $BI_{6}$, $BI_{7}$, and $BI_{8}$.

\begin{figure}
    \center 
    \begin{subfigure}{0.4\textwidth}
        \centering
        \begin{tikzpicture}[x=1.5cm, y=1.5cm,
            vertex/.style={
                shape=circle, fill=black, inner sep=1.5pt	
            }
        ]
        
        \node[vertex] (1) at (0, 0) {};
        \node[vertex] (7) at (-0.5, 0) {};
        \node[vertex] (2) at (0.5, 0.5) {};
        \node[vertex] (8) at (0.5, 1) {};
        \node[vertex] (3) at (1, 0.5) {};
        \node[vertex] (9) at (1, 1) {};
        \node[vertex] (4) at (1.5, 0) {};
        \node[vertex] (10) at (2, 0) {};
        \node[vertex] (5) at (1, -0.5) {};
        \node[vertex] (11) at (1, -1) {};
        \node[vertex] (6) at (0.5, -0.5) {};
        \node[vertex] (12) at (0.5, -1) {};

        \draw (1) -- (2);
        \draw (2) -- (3);
        \draw (3) -- (4);
        \draw (4) -- (5);
        \draw (5) -- (6);
        \draw (6) -- (1);
        \draw (7) -- (8);
        \draw (8) -- (9);
        \draw (9) -- (10);
        \draw (10) -- (11);
        \draw (11) -- (12);
        \draw (12) -- (7);
        \draw (7) -- (2);
        \draw (7) -- (6);
        \draw (8) -- (1);
        \draw (8) -- (3);
        \draw (9) -- (2);
        \draw (9) -- (4);
        \draw (10) -- (3);
        \draw (10) -- (5);
        \draw (11) -- (4);
        \draw (11) -- (6);
        \draw (12) -- (5);
        \draw (12) -- (1);

        \end{tikzpicture}
      \end{subfigure}
      \begin{subfigure}{0.4\textwidth}
        \centering
        \begin{tikzpicture}[x=1.5cm, y=1.5cm,
            vertex/.style={
                shape=circle, fill=black, inner sep=1.5pt	
            }
        ]
        \node[vertex] (1) at (0, 0.25) {};
        \node[vertex] (2) at (0.5, 0.5) {};
        \node[vertex] (3) at (1, 0.5) {};
        \node[vertex] (4) at (1.5, 0) {};
        \node[vertex] (5) at (1, -0.5) {};
        \node[vertex] (6) at (0.5, -0.5) {};
        \node[vertex] (7) at (0, -0.25) {};
        \node[vertex] (8) at (-0.5, 0.25) {};
        \node[vertex] (9) at (0.5, 1) {};
        \node[vertex] (10) at (1, 1) {};
        \node[vertex] (11) at (2, 0) {};
        \node[vertex] (12) at (1, -1) {};
        \node[vertex] (13) at (0.5, -1) {};
        \node[vertex] (14) at (-0.5, -0.25) {};

        \draw (1) -- (2);
        \draw (2) -- (3);
        \draw (3) -- (4);
        \draw (4) -- (5);
        \draw (5) -- (6);
        \draw (6) -- (7);
        \draw (7) -- (1);

        \draw (8) -- (9);
        \draw (9) -- (10);
        \draw (10) -- (11);
        \draw (11) -- (12);
        \draw (12) -- (13);
        \draw (13) -- (14);
        \draw (14) -- (8);

        \draw (8) -- (7);
        \draw (8) -- (2);
        \draw (9) -- (1);
        \draw (9) -- (3);
        \draw (10) -- (2);
        \draw (10) -- (4);
        \draw (11) -- (3);
        \draw (11) -- (5);
        \draw (12) -- (4);
        \draw (12) -- (6);
        \draw (13) -- (5);
        \draw (13) -- (7);
        \draw (14) -- (1);
        \draw (14) -- (6);

    \end{tikzpicture}
    \end{subfigure}
      \begin{subfigure}{0.4\textwidth}
        \centering
        \begin{tikzpicture}[x=1.5cm, y=1.5cm,
            vertex/.style={
                shape=circle, fill=black, inner sep=1.5pt	
            }
        ]
        \node[vertex] (1) at (0, 0.25) {};
        \node[vertex] (9) at (-0.5, 0.25) {};
        \node[vertex] (8) at (0, -0.25) {};
        \node[vertex] (16) at (-0.5, -0.25) {};
        \node[vertex] (2) at (0.5, 0.5) {};
        \node[vertex] (10) at (0.5, 1) {};
        \node[vertex] (3) at (1, 0.5) {};
        \node[vertex] (11) at (1, 1) {};
        \node[vertex] (4) at (1.5, 0.25) {};
        \node[vertex] (5) at (1.5, -0.25) {};
        \node[vertex] (12) at (2, 0.25) {};
        \node[vertex] (13) at (2, -0.25) {};
        \node[vertex] (6) at (1, -0.5) {};
        \node[vertex] (14) at (1, -1) {};
        \node[vertex] (7) at (0.5, -0.5) {};
        \node[vertex] (15) at (0.5, -1) {};

        \draw (1) -- (2);
        \draw (2) -- (3);
        \draw (3) -- (4);
        \draw (4) -- (5);
        \draw (5) -- (6);
        \draw (6) -- (7);
        \draw (7) -- (8);
        \draw (8) -- (1);

        \draw (9) -- (10);
        \draw (10) -- (11);
        \draw (11) -- (12);
        \draw (12) -- (13);
        \draw (13) -- (14);
        \draw (14) -- (15);
        \draw (15) -- (16);
        \draw (16) -- (9);

        \draw (9) -- (2);
        \draw (9) -- (8);
        \draw (10) -- (1);
        \draw (10) -- (3);
        \draw (11) -- (2);
        \draw (11) -- (4);
        \draw (12) -- (3);
        \draw (12) -- (5);
        \draw (13) -- (4);
        \draw (13) -- (6);
        \draw (14) -- (5);
        \draw (14) -- (7);
        \draw (15) -- (6);
        \draw (15) -- (8);
        \draw (16) -- (7);
        \draw (16) -- (1);

    \end{tikzpicture}
    \end{subfigure}
    \caption{Illustration of the graphs $BI_{6}, BI_{7}$ and $BI_{8}$}
    \label{bone_idle_graphs}
\end{figure}
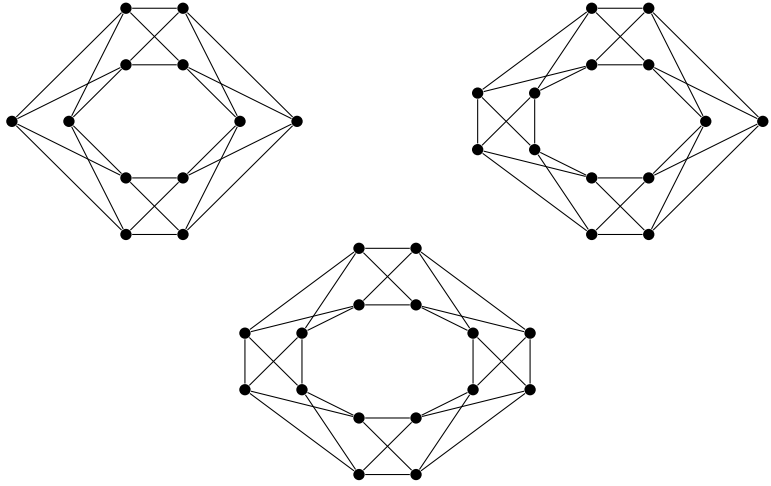

Examples of finite graphs with the primitive graph shown in Figure \ref{primitive_two} include potentially twisted tori and the Klein bottle graphs. These graphs can be constructed as follows: Take an $n\times m$ grid graph and denote the vertices by $x_{i,j}$ for $i=0,\dots,n-1$ and $j=0,\dots, m-1$. First, add the edges $x_{0,j} \sim x_{n-1,j}$ for $j=0,\dots, m-1$. Then, to obtain a Klein bottle graph add the edges $x_{i,0}\sim x_{n-1-i, m-1}$ for $i=0,\dots n-1$. Note that $n,m \geq 6$ must hold. On the other hand, to obtain a twisted torus graph add the edges $x_{i,0} \sim x_{(i+l)\mod n,m-1}$ for $i=0,\dots,n-1$ and for a fixed $l \leq \frac{n}{2}$. Note that $n\geq 6$ and $m+l \geq 6$ must hold. A twisted torus graph and a Klein bottle graph are depicted in Figure \ref{twisted_torus_klein_bottle}.

By \thref{bone_idle_girth_5}, no 4-regular bone-idle graphs exist with girth greater than or equal to five. Consequently, we have completed the classification of all 4-regular bone-idle graphs.

\subsection{5-regular bone-idle graphs}

In \cite{LLY2011}, Lin, Lu, and Yau established the following result concerning the curvature of Cartesian products of graphs.

\begin{theorem}[\cite{LLY2011}]
    Let $G=(V_{G},E_{G})$ be a $d_{G}$-regular graph and $H=(V_{H}, E_{H})$ be a $d_{H}$-regular graph. Let $x_{1}, x_{2}\in V_{G}$ with $x_{1}\sim x_{2}$ and $y \in V_{H}$. Then 
    \begin{align*}
        \kappa^{G\square H}_{0}((x_{1},y),(x_{2},y)) & = \frac{d_{G}}{d_{G}+d_{H}} \kappa^{G}_{0}(x_{1},x_{2}),
        \\
        \kappa^{G\square H}((x_{1},y),(x_{2},y)) & = \frac{d_{G}}{d_{G}+d_{H}} \kappa^{G}(x_{1},x_{2}).  
    \end{align*}
\end{theorem}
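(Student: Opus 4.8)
The plan is to compute both curvatures on $G \square H$ through the optimal-assignment formulas \thref{Lin_Lu_Yau_curvature} and \thref{kappa_null}, exploiting the product structure of the metric. First I would record the standard fact that $d((a,b),(c,d)) = d(a,c) + d(b,d)$ for vertices $(a,b),(c,d) \in V_G \times V_H$ (any path in the product projects to, and is reassembled from, paths in the two factors). In particular both $(x_1,y)$ and $(x_2,y)$ have degree $d_G + d_H$, so the equal-degree hypotheses of \thref{Lin_Lu_Yau_curvature} and \thref{kappa_null} are met and each curvature is governed by an optimal assignment between the corresponding sphere/ball differences.

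Next I would make the relevant sets explicit. The neighbours of $(x_1,y)$ split into \emph{$G$-type} vertices $(x',y)$ with $x' \sim x_1$ and \emph{$H$-type} vertices $(x_1,y')$ with $y' \sim y$. Using the distance formula, every $H$-type neighbour of $(x_1,y)$ lies in $S_1((x_1,y)) \setminus B_1((x_2,y))$, while a $G$-type neighbour $(x',y)$ lies there exactly when $x' \in S_1(x_1) \setminus B_1(x_2)$ in $G$; the symmetric statement holds at $(x_2,y)$, and replacing $B_1$ by $S_1$ throughout gives the analogous description for the $\kappa_0$-formula. Thus both problems ask to match $d_H$ $H$-type sources together with the $G$-type sources against their counterparts, and the obvious \emph{block-diagonal} assignment — sending $(x_1,y') \mapsto (x_2,y')$ and $(x',y) \mapsto (\psi(x'),y)$ for an optimal $G$-assignment $\psi$ — has total cost $d_H + \inf_{\psi}\sum_{x'} d(x',\psi(x'))$, where the $H$-block contributes exactly $d_H$ because fixing the $H$-coordinate is optimal. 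Substituting this value into the two formulas, with denominator $d_G + d_H$, collapses the $d_H$ terms and produces $\frac{d_G}{d_G+d_H}\kappa^{G}(x_1,x_2)$ and $\frac{d_G}{d_G+d_H}\kappa_0^{G}(x_1,x_2)$. Since the block-diagonal assignment only bounds the infimum of the assignment cost from above, this already gives the two inequalities in the ``$\geq$'' direction for the curvatures.

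The heart of the argument is to show the block-diagonal assignment is \emph{optimal}, i.e.\ that no assignment mixing the two types transports more cheaply. For the Lin–Lu–Yau formula I would argue by uncrossing: if an optimal assignment sends an $H$-source $(x_1,y_1')$ to a $G$-target $(x_2'',y)$, a counting argument forces some $G$-source $(x_1',y)$ to go to an $H$-target $(x_2,y_2')$; rerouting to $(x_1,y_1') \mapsto (x_2,y_2')$ and $(x_1',y) \mapsto (x_2'',y)$ changes the total cost by at most
\[
    d(y_1',y_2') - d(x_1',x_2) \leq 2 - 2 = 0,
\]
using $d(y_1',y_2') \leq 2$ (both are neighbours of $y$), $d(x_1',x_2) \geq 2$ (as $x_1' \notin B_1(x_2)$), and $d(x_1',x_2'') \leq 1 + d(x_1,x_2'')$. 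Each such move strictly decreases the number of mixed pairs, so iterating yields a block-diagonal optimal assignment and forces equality.

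I expect the main obstacle to be the corresponding optimality statement for $\kappa_0$. There the sphere-difference sets $S_1 \setminus S_1$ contain the two \emph{central} vertices $(x_2,y)$ and $(x_1,y)$ as a source and a target at mutual distance one, and every source (respectively target) is a neighbour of $(x_1,y)$ (respectively $(x_2,y)$); consequently the clean estimate above can fail for exchanges involving these central vertices, as the bound $d(x_1',x_2) \geq 2$ breaks down. One must therefore argue more carefully — either by handling the central pair separately, or by observing that the $H$-coordinates of the sources and of the targets coincide as multisets, so that by the shared-mass principle of \thref{dontmove} the $H$-part of the transport is realised at zero residual cost while the $G$-part reduces to an optimal $G$-assignment. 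Once optimality is established in both cases, the substitution described above completes the proof.
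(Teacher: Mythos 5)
The paper does not prove this statement---it is quoted from \cite{LLY2011}---so your attempt can only be judged on its own terms. Your $\kappa$-half is correct and complete in outline: the identity $d((a,b),(c,d))=d(a,c)+d(b,d)$, the description of $S_{1}((x_{1},y))\setminus B_{1}((x_{2},y))$ as the $d_{H}$ vertices $(x_{1},y')$ together with $\{(x',y): x'\in S_{1}(x_{1})\setminus B_{1}(x_{2})\}$, the block-diagonal upper bound $d_{H}+C$ (where $C$ is the optimal $G$-assignment cost), and the uncrossing estimate $d(y_{1}',y_{2}')-d(x_{1}',x_{2})\leq 2-2=0$ all check out, and substitution into \thref{Lin_Lu_Yau_curvature} gives the first identity. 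The genuine gap is exactly where you place it, in the $\kappa_{0}$-half, and neither of your two proposed repairs closes it. Concretely: the source set of \thref{kappa_null} contains $s^{*}=(x_{2},y)$ and the target set contains $t^{*}=(x_{1},y)$, and the exchange between the pairs $(x_{1},y_{1}')\mapsto t^{*}$ and $s^{*}\mapsto (x_{2},y_{2}')$ (each of cost $1$) produces pairs of cost $1+d(y_{1}',y_{2}')$ and $1$, i.e.\ it can \emph{increase} the total cost by up to $2$; so pairwise uncrossing does not terminate in a block-diagonal assignment, and indeed the cheapest assignments may genuinely route $s^{*}$ and $t^{*}$ through the $G$-block (mirroring the construction in the proof of \thref{kappa_vergleich}, where the maximal-stretch pair $j\mapsto\phi(j)$ is broken up via the central vertices).

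Your fallback also fails, for a quantitative reason. \thref{dontmove} is a statement about transport plans between probability measures; here the two sets $S_{1}((x_{1},y))\setminus S_{1}((x_{2},y))$ and $S_{1}((x_{2},y))\setminus S_{1}((x_{1},y))$ are disjoint, so there is no shared mass and the lemma yields nothing about bijections. If instead you project costs to the two factors and minimize each projection independently, the $H$-part gives $0$ (the $H$-coordinate multisets coincide) and the $G$-part gives at most $d_{H}-1+C$: match one copy of $x_{1}$ to the target $x_{1}$ and the source $x_{2}$ to one copy of $x_{2}$ at cost $0$, the remaining $x_{1}$'s to $x_{2}$'s at cost $d_{H}-1$, and the rest optimally at cost $C$. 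But the required lower bound is $d_{H}+C_{0}$, and by \thref{vergleich_einfach} one has $C_{0}=C+c-1$ with $c\in\{0,1,2\}$, so the projection bound falls short by exactly $c$---it fails precisely when $\kappa^{G}(x_{1},x_{2})>\kappa^{G}_{0}(x_{1},x_{2})$, e.g.\ whenever $G$ has diameter at most two (cf.\ the remark after \thref{equality_condition}). What the lower bound really encodes is the identity $C_{0}=C+2-M$, where $M$ is the maximal stretch over optimal $G$-assignments (the content of \thref{kappa_vergleich}), so any correct completion must couple the treatment of the central pair to $M$: for instance, prove that the maximal stretch over optimal assignments is the same for the edge in $G$ and in $G\square H$ and then apply \thref{kappa_vergleich} in both graphs, or follow the original duality route of \cite{LLY2011} by lifting an optimal $1$-Lipschitz potential from $G$ to $G\square H$. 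As written, the $\kappa_{0}$ identity remains unproved.
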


\begin{corollary}
    Let $G=(V_{G},E_{G})$ be a $d_{G}$-regular graph and $H=(V_{H}, E_{H})$ be a $d_{H}$-regular graph. If $G$ and $H$ are Ricci-flat graphs, then the Cartesian product $G \square H$ is also Ricci-flat.
\end{corollary}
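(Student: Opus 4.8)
The plan is to reduce everything to the product formula for the Lin-Lu-Yau curvature quoted immediately above, which already expresses the curvature of a ``$G$-type'' edge of $G \square H$ in terms of the curvature of the corresponding edge in $G$. First I would recall that, by the definition of the Cartesian product, every edge of $G \square H$ has exactly one of two forms: either $(x_1, y) \sim (x_2, y)$ with $x_1 \sim x_2$ in $G$ and $y \in V_H$ fixed, or $(x, y_1) \sim (x, y_2)$ with $y_1 \sim y_2$ in $H$ and $x \in V_G$ fixed. Thus it suffices to show that the Lin-Lu-Yau curvature vanishes on each of these two families.

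For an edge of the first type, the preceding theorem of Lin, Lu, and Yau gives $\kappa^{G \square H}((x_1,y),(x_2,y)) = \frac{d_G}{d_G + d_H}\, \kappa^{G}(x_1, x_2)$. Since $G$ is Ricci-flat, $\kappa^{G}(x_1, x_2) = 0$, so the curvature of this edge is $0$.

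For an edge of the second type, I would invoke the natural isomorphism $G \square H \cong H \square G$, under which such an edge corresponds to a ``$G$-type'' edge of $H \square G$. Applying the preceding theorem with the roles of $G$ and $H$ interchanged yields $\kappa^{G \square H}((x,y_1),(x,y_2)) = \frac{d_H}{d_G + d_H}\, \kappa^{H}(y_1, y_2)$, and the Ricci-flatness of $H$ forces this to be $0$ as well. Since every edge of $G \square H$ falls into one of these two families and carries curvature $0$, the product $G \square H$ is Ricci-flat.

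The only point requiring a little care is the second type of edge: the quoted theorem is phrased only for edges that vary the $G$-coordinate, so I would make explicit that the symmetric statement for the $H$-coordinate follows by interchanging the two factors. Beyond this bookkeeping there is no genuine obstacle; the result is a direct corollary of the product formula together with the definition of Ricci-flatness.
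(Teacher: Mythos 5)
Your proof is correct and is exactly the argument the paper intends: the corollary is stated there without proof as an immediate consequence of the quoted product formula, and your decomposition of the edges of $G \square H$ into the two coordinate types, together with the symmetry $G \square H \cong H \square G$ to handle the $H$-type edges, is precisely the routine verification being left to the reader. Your explicit remark about interchanging the factors is the only step that needs any care, and you handle it correctly since the Lin-Lu-Yau curvature is invariant under graph isomorphism.
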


\begin{figure}
    \center 
    \begin{subfigure}{0.4\textwidth}
        \centering
        \begin{tikzpicture}[x=1.5cm, y=1.5cm,
            vertex/.style={
                shape=circle, fill=black, inner sep=1.5pt	
            }
        ]
        
        \node[vertex] (1) at (0, 0) {};
        \node[vertex] (2) at (0.5, 0) {};
        \node[vertex] (3) at (1, 0) {};

        \node[vertex] (4) at (0, 0.5) {};
        \node[vertex] (5) at (0.5, 0.5) {};
        \node[vertex] (6) at (1, 0.5) {};
        
        \node[vertex] (7) at (0, 1) {};
        \node[vertex] (8) at (0.5, 1) {};
        \node[vertex] (9) at (1, 1) {};

        \node[vertex] (10) at (0, 1.5) {};
        \node[vertex] (11) at (0.5, 1.5) {};
        \node[vertex] (12) at (1, 1.5) {};

        \node[vertex] (13) at (0, 2) {};
        \node[vertex] (14) at (0.5, 2) {};
        \node[vertex] (15) at (1, 2) {};

        \node[vertex] (16) at (0, 2.5) {};
        \node[vertex] (17) at (0.5, 2.5) {};
        \node[vertex] (18) at (1, 2.5) {};

        \draw (1) -- (2);
        \draw (2) -- (3);

        \draw (4) -- (5);
        \draw (5) -- (6);

        \draw (7) -- (8);
        \draw (8) -- (9);

        \draw (10) -- (11);
        \draw (11) -- (12);

        \draw (13) -- (14);
        \draw (14) -- (15);

        \draw (16) -- (17);
        \draw (17) -- (18);
        
        \draw (1) -- (4);
        \draw (4) -- (7);
        \draw (7) -- (10);
        \draw (10) -- (13);
        \draw (13) -- (16);

        \draw (2) -- (5);
        \draw (5) -- (8);
        \draw (8) -- (11);
        \draw (11) -- (14);
        \draw (14) -- (17);

        \draw (3) -- (6);
        \draw (6) -- (9);
        \draw (9) -- (12);
        \draw (12) -- (15);
        \draw (15) -- (18);

        \draw    (1) .. controls (-0.25,1.25) and (-0.25,1.25) .. (16) ;
        \draw    (2) .. controls (0.25,1.25) and (0.25,1.25) .. (17) ;
        \draw    (3) .. controls (0.75,1.25) and (0.75,1.25) .. (18) ;

        \draw    (1) -- (12) ;
        \draw    (4) -- (15) ;
        \draw    (7) -- (18) ;
        \draw    (10) -- (3) ;
        \draw    (13) -- (6) ;
        \draw    (16) -- (9) ;
      
        \end{tikzpicture}
    \end{subfigure}
    \begin{subfigure}{0.4\textwidth}
        \centering
        \begin{tikzpicture}[x=1.5cm, y=1.5cm,
            vertex/.style={
                shape=circle, fill=black, inner sep=1.5pt	
            }
        ]
        
        \node[vertex] (1) at (0, 0) {};
        \node[vertex] (2) at (0.5, 0) {};
        \node[vertex] (3) at (1, 0) {};
        \node[vertex] (4) at (1.5, 0) {};
        \node[vertex] (5) at (2, 0) {};
        \node[vertex] (6) at (2.5, 0) {};

        \node[vertex] (7) at (0, 0.5) {};
        \node[vertex] (8) at (0.5, 0.5) {};
        \node[vertex] (9) at (1, 0.5) {};
        \node[vertex] (10) at (1.5, 0.5) {};
        \node[vertex] (11) at (2, 0.5) {};
        \node[vertex] (12) at (2.5, 0.5) {};

        \node[vertex] (13) at (0, 1) {};
        \node[vertex] (14) at (0.5, 1) {};
        \node[vertex] (15) at (1, 1) {};
        \node[vertex] (16) at (1.5, 1) {};
        \node[vertex] (17) at (2, 1) {};
        \node[vertex] (18) at (2.5, 1) {};

        \node[vertex] (19) at (0, 1.5) {};
        \node[vertex] (20) at (0.5, 1.5) {};
        \node[vertex] (21) at (1, 1.5) {};
        \node[vertex] (22) at (1.5, 1.5) {};
        \node[vertex] (23) at (2, 1.5) {};
        \node[vertex] (24) at (2.5, 1.5) {};

        \node[vertex] (25) at (0, 2) {};
        \node[vertex] (26) at (0.5, 2) {};
        \node[vertex] (27) at (1, 2) {};
        \node[vertex] (28) at (1.5, 2) {};
        \node[vertex] (29) at (2, 2) {};
        \node[vertex] (30) at (2.5, 2) {};

        \node[vertex] (31) at (0, 2.5) {};
        \node[vertex] (32) at (0.5, 2.5) {};
        \node[vertex] (33) at (1, 2.5) {};
        \node[vertex] (34) at (1.5, 2.5) {};
        \node[vertex] (35) at (2, 2.5) {};
        \node[vertex] (36) at (2.5, 2.5) {};
        
        \draw (1) -- (2);
        \draw (2) -- (3);
        \draw (3) -- (4);
        \draw (4) -- (5);
        \draw (5) -- (6);

        \draw (7) -- (8);
        \draw (8) -- (9);
        \draw (9) -- (10);
        \draw (10) -- (11);
        \draw (11) -- (12);

        \draw (13) -- (14);
        \draw (14) -- (15);
        \draw (15) -- (16);
        \draw (16) -- (17);
        \draw (17) -- (18);

        \draw (19) -- (20);
        \draw (20) -- (21);
        \draw (21) -- (22);
        \draw (22) -- (23);
        \draw (23) -- (24);

        \draw (25) -- (26);
        \draw (26) -- (27);
        \draw (27) -- (28);
        \draw (28) -- (29);
        \draw (29) -- (30);

        \draw (31) -- (32);
        \draw (32) -- (33);
        \draw (33) -- (34);
        \draw (34) -- (35);
        \draw (35) -- (36);

        \draw (1) -- (7);
        \draw (2) -- (8);
        \draw (3) -- (9);
        \draw (4) -- (10);
        \draw (5) -- (11);
        \draw (6) -- (12);

        \draw (7) -- (13);
        \draw (8) -- (14);
        \draw (9) -- (15);
        \draw (10) -- (16);
        \draw (11) -- (17);
        \draw (12) -- (18);

        \draw (13) -- (19);
        \draw (14) -- (20);
        \draw (15) -- (21);
        \draw (16) -- (22);
        \draw (17) -- (23);
        \draw (18) -- (24);

        \draw (19) -- (25);
        \draw (20) -- (26);
        \draw (21) -- (27);
        \draw (22) -- (28);
        \draw (23) -- (29);
        \draw (24) -- (30);

        \draw (25) -- (31);
        \draw (26) -- (32);
        \draw (27) -- (33);
        \draw (28) -- (34);
        \draw (29) -- (35);
        \draw (30) -- (36);

        \draw    (1) .. controls (-0.25,1.25) and (-0.25,1.25) .. (31) ;
        \draw    (2) .. controls (0.25,1.25) and (0.25,1.25) .. (32) ;
        \draw    (3) .. controls (0.75,1.25) and (0.75,1.25) .. (33) ;
        \draw    (4) .. controls (1.25,1.25) and (1.25,1.25) .. (34) ;
        \draw    (5) .. controls (1.75,1.25) and (1.75,1.25) .. (35) ;
        \draw    (6) .. controls (2.25,1.25) and (2.25,1.25) .. (36) ;

        \draw (1) -- (36);
        \draw (7) -- (30);
        \draw (13) -- (24);
        \draw (19) -- (18);
        \draw (25) -- (12);
        \draw (31) -- (6);
        \end{tikzpicture}
    \end{subfigure}
    \caption{Illustration of a twisted torus graph on the left and a Klein bottle graph on the right}
    \label{twisted_torus_klein_bottle}
\end{figure}
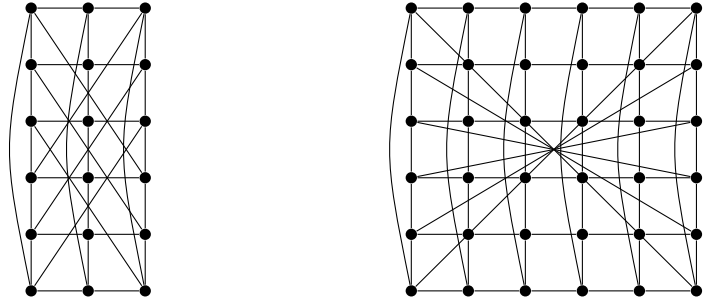

Therefore, one method for constructing 5-regular Ricci-flat graphs is to take the Cartesian product of a Ricci-flat 3-regular graph and a Ricci-flat 2-regular graph. By \thref{bone_idle_3_reg}, this approach is not applicable for constructing 5-regular bone-idle graphs, as no 3-regular bone-idle graphs exist.

The authors in \cite{Lei2022} study 5-regular Ricci-flat graphs. They are able to identify a 5-regular, symmetric graph of order 72, denoted $RF_{72}^{5}$, that is not of Cartesian product type.

\begin{theorem}[\cite{Lei2022}, Theorem 4.1]
    Let $G=(V,E)$ be a $5$-regular, symmetric graph.  If $G$ is Ricci-flat, then it is isomorphic to $RF_{72}^{5}$.
\end{theorem}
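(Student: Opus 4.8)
The plan is to classify the $5$-regular symmetric Ricci-flat graphs by first extracting rigid local combinatorial data from the curvature condition together with the symmetry, and then showing that this data forces a unique global graph. As a first reduction, note that \thref{ricci_flat} classifies all Ricci-flat graphs of girth at least five, and none of them is $5$-regular; hence any $5$-regular Ricci-flat graph has girth three or four. Edge-transitivity guarantees that the number of common neighbors $t := \vert N_{xy} \vert$ is the same for every edge $x \sim y$, and vertex-transitivity guarantees that the isomorphism type of the ball $B_{2}(x)$ is independent of $x$. The strategy is therefore to determine the admissible value of $t$ and the forced shape of $B_{2}(x)$, then to propagate this homogeneous local model to a unique global graph using the automorphism group.

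First I would apply \thref{Lin_Lu_Yau_curvature} with $d = 5$. Since $\vert S_{1}(x)\setminus B_{1}(y) \vert = 4 - t$, Ricci-flatness $\kappa(x,y)=0$ is equivalent to $\inf_{\phi \in \mathcal{A}_{xy}} \sum_{z \in S_{1}(x)\setminus B_{1}(y)} d(z,\phi(z)) = 6$. Writing $n_{i}$ for the number of $z$ with $d(z,\phi(z)) = i$ under an optimal assignment $\phi \in \mathcal{O}_{xy}$, we have $n_{1}+n_{2}+n_{3} = 4-t$ and $n_{1}+2n_{2}+3n_{3} = 6$. Because each summand lies in $\{1,2,3\}$, the existence of an optimal assignment of cost exactly $6$ forces $4-t \leq 6 \leq 3(4-t)$, hence $t \in \{0,1,2\}$. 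Enumerating the solutions of the two linear relations produces a short list of admissible distance profiles: $t=2$ (necessarily girth three) forces $(n_{1},n_{2},n_{3})=(0,0,2)$; $t=1$ (girth three) forces $(0,3,0)$ or $(1,1,1)$; and $t=0$ (girth four) forces $(2,2,0)$ or $(3,0,1)$. Each profile is a precise count of the triangles, $4$-cycles, and $5$-cycles supported on the base edge.

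Next I would promote these edge-profiles to a rigid description of $B_{2}(x)$. Here vertex- and edge-transitivity do the essential work: because every edge carries the same profile and every vertex has the same link, the adjacencies within $S_{1}(x)$ and between $S_{1}(x)$ and $S_{2}(x)$ are confined to a handful of isomorphism types. I would then eliminate the configurations that cannot be realized consistently, for example those whose triangle and pentagon counts conflict when two overlapping edges are examined simultaneously, or those that would create a shorter cycle and thereby violate the girth determined by $t$. The aim of this step is to show that a single homogeneous local model survives, namely the one realized in $RF_{72}^{5}$.

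The main obstacle is the passage from this forced local model to a unique global graph, and in particular establishing finiteness and the exact order $72$. The local data determines a universal object, the $1$-skeleton of a simply connected cell complex obtained by attaching $2$-cells along the admissible short cycles, in the spirit of the primitive graphs used earlier in this paper; one must then show that the only symmetric quotient compatible with $\kappa \equiv 0$ is $RF_{72}^{5}$, ruling out competing finite quotients and infinite covers. I expect to carry this out by analysing the action of $\mathrm{Aut}(G)$ on edges and on the short cycles through a fixed base edge, using transitivity to extract a generating set of relations and a covering (voltage-graph) argument to pin down the index of the quotient. Finally, verifying that $RF_{72}^{5}$ is itself Ricci-flat, so that the classification is non-vacuous, is a direct application of \thref{Lin_Lu_Yau_curvature} to one representative edge, which suffices by edge-transitivity.
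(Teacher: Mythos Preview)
The paper does not prove this theorem; it is quoted verbatim as Theorem 4.1 of \cite{Lei2022} and used as a black box to deduce the subsequent corollary about bone-idle $5$-regular symmetric graphs. There is therefore no proof in the present paper to compare your proposal against.

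As for the proposal itself, your opening reductions are sound: \thref{ricci_flat} rules out girth $\geq 5$, edge-transitivity makes $t=\vert N_{xy}\vert$ constant, and the arithmetic from \thref{Lin_Lu_Yau_curvature} restricting the distance profiles $(n_1,n_2,n_3)$ is correct. However, the proposal stops well short of a proof. You yourself flag the ``main obstacle'' as the passage from the local model to the global graph and describe only what you ``expect to carry out''; the elimination of all but one local configuration, the finiteness argument, and the identification of the unique symmetric quotient as a graph on $72$ vertices are all asserted as intentions rather than executed. In \cite{Lei2022} these steps occupy the bulk of the work and involve substantial case analysis; nothing in your outline explains how the covering/voltage-graph machinery would pin down the order $72$ or exclude other symmetric quotients. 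So the proposal is a reasonable roadmap, but it is not a proof, and there is no corresponding argument in the present paper to benchmark it against.
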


It is easy to verify that for every edge $x\sim y$ in $RF_{72}^{5}$, $\kappa_{0}(x,y) = -\frac{1}{5}$ holds. Therefore, we obtain the following result.

\begin{corollary}
    There exists no $5$-regular, symmetric graph that is bone-idle.
\end{corollary}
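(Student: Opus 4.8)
The plan is to combine the classification of $5$-regular symmetric Ricci-flat graphs (quoted above) with the characterization of bone-idleness in terms of the two curvatures $\kappa$ and $\kappa_{0}$. The argument will be a short proof by contradiction, since both the classification and the relevant curvature computation are already available.

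First I would recall from \thref{bone-idle_vs_ricci-flat} that an edge $x\sim y$ is bone-idle precisely when $\kappa_{0}(x,y) = \kappa(x,y) = 0$; consequently a bone-idle graph is in particular Ricci-flat, i.e.\ $\kappa(x,y) = 0$ for every edge $x\sim y$. Now assume, for contradiction, that $G$ is a $5$-regular symmetric graph that is bone-idle. Then $G$ is Ricci-flat, so by the cited theorem (\cite{Lei2022}, Theorem 4.1) the graph $G$ must be isomorphic to $RF_{72}^{5}$.

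Next I would invoke the curvature value on this specific graph: for every edge $x\sim y$ of $RF_{72}^{5}$ one has $\kappa_{0}(x,y) = -\tfrac{1}{5}$, as recorded just before the statement. Since isomorphic graphs carry the same Ollivier-Ricci curvature on corresponding edges, bone-idleness of $G$ would force $\kappa_{0} = 0$ on every edge, which is incompatible with $\kappa_{0} = -\tfrac{1}{5} \neq 0$. This contradiction shows that no such graph exists, completing the proof.

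The only genuinely non-routine ingredient is the value $\kappa_{0} = -\tfrac{1}{5}$ on $RF_{72}^{5}$, and this is where the real work would sit were it not already supplied. I would expect to establish it by exploiting that $RF_{72}^{5}$ is vertex- and edge-transitive to reduce to a single representative edge, and then either computing $\kappa_{0}$ directly from the optimal assignment problem of \thref{kappa_null}, or passing between $\kappa$ and $\kappa_{0}$ via \thref{kappa_vergleich} using that $\kappa = 0$ by Ricci-flatness together with the local structure of $S_{1}(x)\setminus B_{1}(y)$. Everything else in the argument is immediate from the quoted results.
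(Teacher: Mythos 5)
Your proposal is correct and follows the same route as the paper: bone-idleness implies Ricci-flatness via \thref{bone-idle_vs_ricci-flat}, the Lei--Bai classification forces $G \cong RF_{72}^{5}$, and the value $\kappa_{0}(x,y) = -\tfrac{1}{5}$ on every edge contradicts $\kappa_{0} \equiv 0$. The paper simply asserts this curvature value as "easy to verify," so your sketch of how to compute it (reducing to a single edge by symmetry and using \thref{kappa_null} or \thref{kappa_vergleich}) is a reasonable elaboration of the same argument rather than a different approach.
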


The authors also formulate the following conjecture. 

\begin{conjecture}[\cite{Lei2022}, Conjecture 1]
    If $G=(V,E)$ is a 5-regular Ricci-flat graph, then $G$ is either isomorphic to $RF_{72}^{5}$ or $G$ is of Cartesian product type.
\end{conjecture}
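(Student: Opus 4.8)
The statement is a classification conjecture, so the plan is to mirror the successful classification strategies already used for the $3$-regular case (\thref{bone_idle_3_reg}), the $4$-regular case (Section \ref{bone_idle_4_reg}), and the $5$-regular \emph{symmetric} case of \cite{Lei2022}, and to extend them to the full $5$-regular Ricci-flat setting. First I would reduce the girth: by \thref{ricci_flat}, every Ricci-flat graph of girth at least five is one of six explicitly listed graphs, none of which is $5$-regular, so any $5$-regular Ricci-flat graph has girth three or four. This splits the problem into two cases, exactly as in the $4$-regular classification.

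Next I would extract the local combinatorial constraints forced by $\kappa(x,y)=0$. Applying \thref{Lin_Lu_Yau_curvature} with $d=5$ and writing $N_{1}(\phi),N_{2}(\phi),N_{3}(\phi)$ for the numbers of neighbors of $x$ sent to distance $1,2,3$ by an optimal assignment $\phi\in\mathcal{O}_{xy}$, Ricci-flatness is equivalent to
\begin{equation*}
    N_{1}(\phi)+N_{2}(\phi)+N_{3}(\phi) = 4-\vert N_{xy}\vert
    \quad\text{and}\quad
    N_{2}(\phi)+2N_{3}(\phi) = \vert N_{xy}\vert+2 .
\end{equation*}
Solving this integer system shows that $\vert N_{xy}\vert\in\{0,1,2\}$ (the values $\vert N_{xy}\vert\geq 3$ are arithmetically impossible), and that only a short explicit list of admissible local patterns can occur at each edge. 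These patterns generalize the dichotomy of \thref{ricci_flat_condition} and serve as the building blocks for the two girth cases.

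I would then follow the primitive-graph method of \cite{Bai2021} and \cite{Lei2022}: pass to the universal cover of the $2$-complex obtained by attaching cells along all cycles of length at most five, and propagate the finitely many admissible local patterns across this cover. The aim is to show that the cover is forced into one of a small number of highly homogeneous infinite models, namely the product models $A\,\square\,P_{\infty}$ (with $A$ a $3$-regular Ricci-flat structure and $P_{\infty}$ the bi-infinite path, consistent with the product curvature formula of \cite{LLY2011}) on the one hand, and the unique non-product cover underlying $RF_{72}^{5}$ on the other. Descending to finite quotients, one then identifies the product models with the Cartesian-product-type graphs $A\,\square\,C_{m}$ ($A$ a $3$-regular Ricci-flat graph, $C_{m}$ a Ricci-flat cycle), and the exceptional cover with $RF_{72}^{5}$ itself, invoking the rigidity of the symmetric classification of \cite{Lei2022} to pin down the single sporadic quotient.

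The main obstacle---and the reason this remains a conjecture---is the propagation step: the admissible-pattern list in degree five is far richer than in degree four (three possible values of $\vert N_{xy}\vert$, each admitting several $(N_{1},N_{2},N_{3})$ solutions), so one must rule out a large number of mixed, non-homogeneous configurations and prove that no exotic cover survives. Concretely, the hard part is establishing a global \emph{product foliation}: showing that the local choice of which edge directions behave like the $2$-regular factor can be made consistently throughout the graph, and that the unique failure of this consistency is realized exactly by $RF_{72}^{5}$. Separating the product case from the sporadic case---most plausibly via the uniqueness of the prime factorization with respect to the Cartesian product, combined with the product curvature formula---is where the essential difficulty lies.
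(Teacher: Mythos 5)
This statement is a \emph{conjecture} (Conjecture 1 of \cite{Lei2022}), and the paper does not prove it: it is quoted as an open problem, and the paper's final remark is explicitly conditional (``Should this conjecture prove true, no 5-regular bone-idle graphs exist''). So there is no proof in the paper to compare against, and your proposal is not a proof either --- it is an outline of the research program already implicit in \cite{Bai2021} and \cite{Lei2022}, with the decisive step left open, as you yourself concede. To your credit, the parts you do carry out are sound: the girth reduction via \thref{ricci_flat} is correct (none of the six girth-$\geq 5$ Ricci-flat graphs is $5$-regular), and your integer system is right --- from \thref{Lin_Lu_Yau_curvature} with $d=5$ one gets $N_{1}+N_{2}+N_{3}=4-\vert N_{xy}\vert$ and $N_{2}+2N_{3}=\vert N_{xy}\vert+2$, whence $\vert N_{xy}\vert\leq 2$, since $N_{2}+2N_{3}\leq 2(4-\vert N_{xy}\vert)$ forces $3\vert N_{xy}\vert\leq 6$.

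The genuine gap is the propagation step, and it is worth naming why it fails as stated rather than merely being laborious. The primitive-graph method of \cite{Bai2021} gets traction in the $4$-regular case because the relevant hypothesis (two $4$-cycles sharing an edge, which \thref{bone_idle} guarantees for bone-idle graphs) pins down a rigid local model that extends uniquely cell by cell; in the $5$-regular Ricci-flat setting the admissible local patterns include $N_{2}$-heavy configurations built from $5$-cycles and mixed triangle/square/pentagon patterns for $\vert N_{xy}\vert\in\{1,2\}$, and these do not propagate deterministically --- different optimal assignments at adjacent edges can be incompatible, so the universal cover is not forced into a short list of homogeneous models by any argument currently known. Your proposed ``product foliation'' (a globally consistent choice of the $2$-regular edge directions) is precisely the assertion whose proof is missing, not a tool available for proving it; nothing in the paper, in \cite{LLY2011}'s product formula, or in the uniqueness of Cartesian prime factorization supplies the local-to-global consistency you invoke. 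In short: correct reductions, correctly identified obstacle, but the statement remains open and your text establishes nothing beyond the known special cases.
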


Should this conjecture prove true, no 5-regular bone-idle graphs exist. The existence of regular bone-idle graphs with an odd vertex degree remains an open question.

\bigskip

{\bf{Acknowledgements:}} I would like to express my deepest gratitude to Prof. Dr. Renesse and Dr. M{\"u}nch for their invaluable support and insightful feedback throughout the course of this work. 

\section*{Declarations}

{\bf{Funding:}} Partial financial support was received from the BMBF (Federal Ministry of Education and Research) in DAAD project 57616814 (SECAI, School of Embedded Composite AI).

{\bf{Conflict of interest:}} The author certifies that he has no affiliations with or involvement in any organization or entity with any financial interest or non-financial interest in the subject matter or materials discussed in this manuscript.

\end{document}